\definecolor{red}{rgb}{1.0,0.0,0.0}
\definecolor{blu}{rgb}{0.0,0.0,1.0}
\definecolor{gre}{rgb}{0.03,0.50,0.03}
\newtheorem{lemma}{Lemma}[section]
\newtheorem{theorem}{Theorem}[section]
\newtheorem{corollary}{Corollary}[section]
\newtheorem{remark}{Remark}[section]
\newtheorem{definition}{Definition}[section]
\newtheorem{example}{Example}[section]
\let\Section=\section
\def\section{\setcounter{equation}{0}\Section}
\def\Swiech
\def\SWIECH
\begin{document}

\title{{\bf Interior regularity for nonlocal fully nonlinear equations with Dini continuous terms
}}

\author{
    \textsc{Chenchen Mou}\\
    \textit{School of Mathematics, Georgia Institute of Technology}\\
\textit{
Atlanta, GA 30332, U.S.A.}\\
 \textit{E-mail: cmou3@math.gatech.edu}   
  }
\date{}

\maketitle

\begin{abstract}
This paper is concerned with interior regularity of viscosity solutions of non-translation invariant nonlocal fully nonlinear equations with Dini continuous terms. We obtain $C^{\sigma}$ regularity estimates for the nonlocal equations by perturbative methods and a version of a recursive Evans-Krylov theorem.
\end{abstract}

\vspace{.2cm}
\noindent{\bf Keywords:}  viscosity
solution; integro-PDE; Hamilton-Jacobi-Bellman equation; Dini continuity.

\vspace{.2cm}
\noindent{\bf 2010 Mathematics Subject Classification}: 35R09, 35D40, 35J60, 47G20, 45K05, 93E20. 
\section{Introduction}
In this paper, we investigate interior regularity of viscosity solutions of nonlocal equations of the type
\begin{equation}\label{eq1.1}
\inf_{a\in\mathcal{A}}\Big\{\int_{\mathbb R^n}\big[u(x+y)-u(x)-\mathbbm{1}_{B_1(0)}(y)Du(x)\cdot y\big]K_a(x,y)dy\Big\}=f(x),\quad \text{in $B_1(0)$},
\end{equation}
where $\mathcal{A}$ is an index set, $\mathbbm{1}_{B_1(0)}$ denotes the indicator function of the unit ball $B_1(0)$ and $K_a(x,y)$ is a positive kernel. The kernels $K_a(x,y)$ are symmetric, i.e., for any $x,y\in\mathbb R^n$
\begin{equation}\label{sym}
K_a(x,y)=K_a(x,-y),
\end{equation}
and satisfy the uniform ellipticity assumption, i.e., for any $x\in\mathbb R^n$ and $y\in\mathbb R^n\setminus\{0\}$
\begin{equation}\label{ell}
\frac{(2-\sigma)\lambda}{|y|^{n+\sigma}}\leq K_a(x,y)\leq \frac{(2-\sigma)\Lambda}{|y|^{n+\sigma}},
\end{equation}
where $0<\lambda\leq \Lambda$. The symmetry assumption is essential for the regularity theory for $(\ref{eq1.1})$, see \cite{LS}. Under the symmetry assumption, $(\ref{eq1.1})$ can be rewritten as
\begin{equation*}
\inf_{a\in\mathcal{A}}\Big\{\int_{\mathbb R^n}\delta u(x,y)K_a(x,y)dy\Big\}=f(x),\quad \text{in $B_1(0)$},
\end{equation*}
where $\delta u(x,y)=u(x+y)+u(x-y)-2u(x)$. We furthermore assume that the kernels $K_a$ satisfy, for any $x\in\mathbb R^n$, any $y\in\mathbb R^n\setminus\{0\}$ and $i=1,2$
\begin{equation}\label{smo}
|D_y^iK(x,y)|\leq \frac{\Lambda(2-\sigma)}{|y|^{n+\sigma+i}}.
\end{equation}

We will obtain $C^{\sigma}$ regularity estimates for $(\ref{eq1.1})$ with Dini continuous data in two steps. We first generalize the recursive Evans-Krylov theorem for translation invariant nonlocal fully nonlinear equations from the case of H\"{o}lder continuous data, see \cite{TJ}, to the Dini continuous case. We then use the perturbative methods to obtain $C^{\sigma}$ regularity estimates for $(\ref{eq1.1})$.

 In recent years, regularity theory of viscosity solutions for integro-differential equations has been studied by many authors under uniform ellipticity assumption $(\ref{ell})$. It was initiated by a series of papers \cite{LL1,LL2,LL3} of L. A. Caffarelli and L. Silvestre, where $C^{\alpha}$ regularity, $C^{1+\alpha}$ regularity and Evans-Krylov theorem for nonlocal fully nonlinear elliptic equations were established. Later, H. Chang Lara and G. Davila studied these regularity results for nonlocal fully nonlinear parabolic equations, see in \cite{CD1,CD2,CD3}. In \cite{Kri}, D. Kriventsov used pertubative methods to prove $C^{1+\alpha}$ regularity estimates for nonlocal fully nonlinear elliptic equations with rough kernels. Then, in \cite{Se1}, J. Serra's results extended the results of \cite{Kri} to parabolic equations by a Liouville theorem and a blow up and compactness procedure. More recently, T. Jin and J. Xiong studied $C^{\sigma+\alpha}$ regularity in the $x$ variable for viscosity solutions for linear parabolic integro-differential equations.  In \cite{TJ}, T. Jin and J. Xiong proved $C^{\sigma+\alpha}$ regularity estimates for non-translation invariant nonlocal fully nonlinear elliptic equations using a recursive Evans-Krylov theorem and perturbative methods. At the same time, J. Serra refined and improved the method of \cite{Se1} to obtain $C^{\sigma+\alpha}$ regularity estimates for nonlocal equations with rough kernels, see \cite{Se}. The reader can also consult \cite{BCCI,BCI} for regularity results for a class of second order integro-differential equations with a different uniform ellipticity assumption. It allows nondegeneracy of the nonlocal terms, or nondegeneracy of nonlocal terms in some directions and nondegeneracy of second order terms in the complementary directions. We also refer the reader to \cite{L,LX,LC,N} for the $C^{\alpha}$ regularity, $C^{1+\alpha}$ regularity and Evans-Krylov theorem for classical fully nonlinear PDEs.

In Section 3, we establish a recursive Evans-Krylov theorem for translation invariant nonlocal fully nonlinear equations in the Dini continuous case. The sequence of equations we consider is, for $j=0,1,\cdots,m$
\begin{equation}\label{eqq3.1}
\inf_{a\in\mathcal{A}}\Big\{\int_{\mathbb R^n}\sum_{l=0}^{j}\rho^{-(j-l)\sigma}w^{-1}(\rho^j)w(\rho^l)\delta v_l(\rho^{j-l}x,\rho^{j-l}y)K_a^j(y)dy+w^{-1}({\rho^{j}})b_a\Big\}=0,\quad \text{in $B_5(0)$},
\end{equation}
where $w(t)$ is a Dini modulus of continuity, $K_a^j(x):=\rho^{j(n+\sigma)}K_a(\rho^jx)$ and $\rho\in(0,1)$. We prove that, for any $l=0,1,\cdots,m$, $\|v_l\|_{C^{\sigma+\bar\beta}(B_1(0))}\leq C$ where $0<\bar\beta<1$ and $C>0$ are two constants independent with $\rho$ and $m$. Recursive Evans-Krylov theorem was first studied by T. Jin and J. Xiong in \cite{TJ}. They used it to obtain the uniform regularity estimates for the approximators at each scale. Instead of using polynomials as approximators, they used solutions for constant coefficient equations since polynomials grow too fast near infinity. We construct a slightly more general recursive Evans-Krylov theorem for our purpose. When $w(t)=t^{\alpha}$ for some $0<\alpha<1$, $(\ref{eqq3.1})$ falls into the case in \cite{TJ}.

Having the recursive Evans-Krylov theorem in the Dini continuous case, in Section 4 we derive the main result of this manuscript, i.e., $C^{\sigma}$ regularity estimates of viscosity solutions for $(\ref{eq1.1})$ with Dini continuous data. To our knowledge, the only available results in this direction are about $C^{\sigma}$ regularity estimates for weak solutions of translation invariant nonlocal equations with bounded data. In Proposition $5.2$ of \cite{CS}, the authors proved $C^{\sigma}$ regularity estimates for
\begin{equation}\label{fra}
u=(-\Delta)^{-\frac{\sigma}{2}}f=\frac{1}{\Gamma (s)}\int_{0}^{+\infty}e^{t\Delta}f(x)\frac{dt}{t^{1-\frac{\sigma}{2}}},\quad\text{in $\mathbb R^n$},
\end{equation}
if $\sigma\not= 1$. For $\sigma=1$, they obtained $\Lambda_{*}(\mathbb R^n)$ regularity estimates for $(\ref{fra})$, where $\Lambda_{*}(\mathbb R^n)$ is the Zygmund space consisting of all bounded functions $u$ on $\mathbb R^n$ such that
\begin{equation*}
[u]_{\Lambda_*(\mathbb R^n)}:=\sup_{x,y\in\mathbb R^n}\frac{|u(x+y)+u(x-y)-2u(x)|}{|y|}<+\infty,
\end{equation*}
with the norm $\|u\|_{\Lambda_*(\mathbb R^n)}:=\|u\|_{L^{\infty}(\mathbb R^n)}+[u]_{\Lambda_*(\mathbb R^n)}$. It can be easily deduced from Proposition $2.8$ of \cite{L1} that the corresponding regularity estimates for weak solutions of $(-\Delta)^{\frac{\sigma}{2}}u=f$ in $\Omega$ hold. We notice that $C^1(\bar\Omega)\subsetneqq\Lambda_*(\Omega)$. In Theorem 1.1(b) of \cite{XJ}, it was shown that $C^{\sigma}$ regularity estimates for weak solutions hold for  
\begin{equation}\label{lev}
Lu:=\int_{\mathbb S^{n-1}}\int_{-\infty}^{\infty}\delta u(x,\theta r)\frac{dr}{|r|^{1+\sigma}}d\mu(\theta)=f(x),\quad\text{in $B_1(0)$},
\end{equation}
with a weaker ellipticity assumption 
\begin{equation*}
0<\lambda\leq\inf_{\nu\in \mathbb S^{n-1}}\int_{\mathbb S^{n-1}}|\nu\cdot \theta|^{\sigma}d\mu(\theta)\quad\text{and}\quad \mu(\mathbb S^{n-1})\leq\Lambda<+\infty,
\end{equation*}
where $\sigma\not =1$. If $\sigma=1$, the authors derived $C^{\sigma-\epsilon}$ regularity estimates for $(\ref{lev})$, where $\epsilon$ can be any positive constant between $0$ and $\sigma$. It was claimed in \cite{XJ} that the methods there can be applied to obtain similar regularity estimates for non-translation invariant equations. In \cite{DK}, H. Dong and D. Kim studied Schauder estimates for a class of nonlocal linear equations with rough kernels in both H\"older and Dini continuous case. However, in the Dini continuous case, they considered the global problem on translation invariant equations, i.e., $Lu=f$ in $\mathbb R^n$ where $L$ is defined in $(\ref{eqq2.1})$. Our results are different from the above results since we are considering the regularity theory of viscosity solutions for non-translation invariant nonlocal fully nonlinear equations. Weak solutions is not equivalent to viscosity solutions in general unless uniqueness of viscosity solutions for such equations holds. However, uniqueness of viscosity solutions for non-translation invariant nonlocal equations is still an open question. Some recent progress has been made in \cite{CA}. Finally we refer the reader to \cite{Ko1,Ko2} for $C^2$ regularity estimates for viscosity solutions of classical fully nonlinear PDEs with Dini continuous terms.

\section{Preliminaries}
Throughout this paper, $\Omega$ is always assumed to be a bounded domain in $\mathbb R^n$. For any $x\in\Omega$, we will write $u\in C^{1,1}(x)$, if there are a vector $p\in\mathbb R^n$, a constant $M>0$ and a neighborhood $N_x$ of $x$ such that
\begin{equation*}
|u(y)-u(x)-p\cdot (y-x)|\leq M|y-x|^2,\quad \text{for any $y\in N_x$}.
\end{equation*}
We denote by $L^1(\mathbb R^n,\frac{1}{1+|y|^{n+\sigma}})$ the usual weighted space of functions $u$ such that
\begin{equation*}
\|u\|_{L^1(\mathbb R^n,\frac{1}{1+|y|^{n+\sigma}})}:=\int_{\mathbb R^n}\frac{|u(y)|}{1+|y|^{n+\sigma}}dy<+\infty.
\end{equation*}

We recall some definitions and notation about nonlocal uniformly elliptic operators, see \cite{LL1,LL2,LL3}.
\begin{definition}
A nonlocal operator $I$ is an operator that maps a function u to a function $I[x,u]$ such that 
\begin{itemize}
\item[{\rm 1.}] $I[x,u]$ is well defined if $u\in C^{1,1}(x)$ and $u\in L^1(\mathbb R^n,\frac{1}{1+|y|^{n+\sigma}})$.
\item[{\rm 2.}] If $u\in C^{1,1}(\Omega)\cap L^1(\mathbb R^n,\frac{1}{1+|y|^{n+\sigma}})$, then $I[x,u]$ is continuous in $\Omega$ as a function of $x$.
\end{itemize}
\end{definition}
We say that the nonlocal operator $I$ is uniformly elliptic with respect to a class $\mathcal{L}$ of linear nonlocal operators if
\begin{equation*}
M_{\mathcal{L}}^-(u-v)(x)\leq I[x,u]-I[x,v]\leq M_{\mathcal{L}}^+(u-v)(x),
\end{equation*}
where 
\begin{equation*}
M_{\mathcal{L}}^+u(x):=\sup_{L\in\mathcal{L}}Lu(x),
\end{equation*}
\begin{equation*}
M_{\mathcal{L}}^-u(x):=\inf_{L\in\mathcal{L}}Lu(x).
\end{equation*}

The norm $\|I\|$ of a nonlocal operator $I$ is defined in the following way.
\begin{definition}
\begin{eqnarray*}
\|I\|_{\Omega}&:=&\sup\Big\{\frac{|I[x,u]|}{1+M}: x\in\Omega,u\in C^{1,1}(x),\|u\|_{L^1(\mathbb R^n,\frac{1}{1+|y|^{n+\sigma}})}\leq M,\\
&&|u(x+z)-u(x)-Du(x)\cdot z|\leq M|z|^2,\,\,\text{for any $z\in B_1(0)$}\Big\}.
\end{eqnarray*}
\end{definition}

The following classes of linear nonlocal operators $\mathcal{L}_i(\lambda,\Lambda,\sigma)$, $i=0,1,2,$ were introduced in \cite{LL1,LL2,LL3}. Let $0<\lambda\leq\Lambda$ be fixed constants. A linear nonlocal operator $L\in \mathcal{L}_0(\lambda,\Lambda,\sigma)$ if
\begin{equation}\label{eqq2.1}
Lu:=\int_{\mathbb R^n}\delta u(x,y)K(y)dy,
\end{equation}
where the kernel $K$ is symmetric and satisfies $(\ref{ell})$.
The class $\mathcal{L}_1(\lambda,\Lambda,\sigma)$ is a subclass of $\mathcal{L}_0(\lambda,\Lambda,\sigma)$ with kernels $K$ satisfying $(\ref{smo})$ with $i=1$. The class $\mathcal{L}_2(\lambda,\Lambda,\sigma)$ is a subclass of $\mathcal{L}_1(\lambda,\Lambda,\sigma)$ with kernels $K$ satisfying $(\ref{smo})$ with $i=2$. We note here that, for $i=0,1,2$, we will also write $K(y)\in\mathcal{L}_i(\lambda,\Lambda,\sigma)$ if the corresponding nonlocal operator $L\in\mathcal{L}_i(\lambda,\Lambda,\sigma)$.

We first review some properties of $L$ defined in $(\ref{eqq2.1})$, see \cite{TJ}.

\begin{lemma}\label{le2.1}
Suppose that $u\in C^4(B_2(0))\cap L^{\infty}(\mathbb R^n)$ and $L\in\mathcal{L}_2(\lambda,\Lambda,\sigma)$. Then
\begin{equation*}
\|Lu\|_{C^{2}(B_1(0))}\leq C(\|u\|_{C^4(B_2(0))}+\|u\|_{L^\infty(\mathbb R^n)}),
\end{equation*}
where $L$ is defined in $(\ref{eqq2.1})$ and $C$ is a positive constant depending on $n$, $\sigma_0$ and $\Lambda$.
\end{lemma}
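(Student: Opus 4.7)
The plan is to split $u$ via a smooth cutoff and handle the smooth and rough pieces with complementary strategies. Choose $\chi \in C_c^\infty(\mathbb R^n)$ with $\chi \equiv 1$ on $B_{3/2}(0)$ and $\mathrm{supp}(\chi) \subset B_{7/4}(0)$, and set $u_1 := \chi u$, $u_2 := (1 - \chi) u$. Then $u_1 \in C^4(\mathbb R^n)$ is compactly supported with $\|u_1\|_{C^4(\mathbb R^n)} \leq C(n)\|u\|_{C^4(B_2(0))}$, while $u_2 \in L^\infty(\mathbb R^n)$ vanishes on $B_{3/2}(0)$ with $\|u_2\|_{L^\infty} \leq \|u\|_{L^\infty}$. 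By linearity of $L$, it suffices to estimate $Lu_1$ and $Lu_2$ separately in $C^2(B_1(0))$.

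For $Lu_1$ the argument is standard: since $u_1$ and its derivatives up to order $4$ are globally bounded, one may differentiate under the integral sign to obtain $D_x^k L u_1 = L(D^k u_1)$ for $k = 0, 1, 2$. Splitting the integral at $|y| = 1$ and combining the Taylor estimate $|\delta D^k u_1(x, y)| \leq \|D^{k+2} u_1\|_\infty |y|^2$ with the pointwise bound $|\delta D^k u_1(x, y)| \leq 4\|D^k u_1\|_\infty$, the ellipticity condition (\ref{ell}) produces
\[
\|L(D^k u_1)\|_{L^\infty(\mathbb R^n)} \leq C(n, \sigma_0, \Lambda)\|u_1\|_{C^{k+2}(\mathbb R^n)} \leq C \|u\|_{C^4(B_2(0))}.
\]

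The substantive piece is $Lu_2$: here $u_2$ is only bounded, so derivatives cannot be placed on it. The key observation is that for $x \in B_1(0)$ and $|y| \leq 1/2$ the points $x$ and $x \pm y$ all lie in $B_{3/2}(0)$, where $u_2 \equiv 0$, so $\delta u_2(x, y) \equiv 0$ and $u_2(x) = 0$. Hence for $x \in B_1(0)$,
\[
Lu_2(x) = \int_{|y| \geq 1/2} \bigl[u_2(x + y) + u_2(x - y)\bigr] K(y) \, dy = 2 \int_{\mathbb R^n} u_2(z) K(z - x) \, dz,
\]
the second equality following from the substitutions $z = x \pm y$ and the symmetry (\ref{sym}) of $K$, together with the fact that $u_2(z) \neq 0$ forces $|z| \geq 3/2$ and hence $|z - x| \geq 1/2$ for $x \in B_1(0)$. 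Now the smoothness hypothesis (\ref{smo}) lets me transfer the $x$-derivatives onto the kernel:
\[
|D_x^k L u_2(x)| \leq 2\|u_2\|_{L^\infty(\mathbb R^n)} \int_{|w| \geq 1/2} |D^k K(w)| \, dw \leq C(n, \sigma_0, \Lambda) \|u\|_{L^\infty(\mathbb R^n)},
\]
for $k = 0, 1, 2$. Adding the two estimates yields the lemma. The main technical point, and the reason the hypothesis $L \in \mathcal L_2$ (i.e.\ (\ref{smo}) for $i = 1, 2$) is essential, is precisely this maneuver of pushing two $x$-derivatives onto the kernel in the far-field piece, where $u$ has no claimed regularity.
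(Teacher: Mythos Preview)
Your argument is correct. The paper itself does not supply a proof of this lemma: it is quoted without proof from Jin--Xiong \cite{TJ} (see the sentence preceding Lemma~\ref{le2.1}). The cutoff decomposition $u=\chi u+(1-\chi)u$ you use --- placing $x$-derivatives on the function for the compactly supported $C^4$ piece and on the kernel via (\ref{smo}) for the far-field piece --- is the natural and standard route, and is essentially what one finds in the cited reference.
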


\begin{lemma}\label{le2.2}
Suppose that $u\in C^{\sigma+\alpha}(\mathbb R^n)$, $0\leq K(y)\leq(2-\sigma)\Lambda|y|^{-n-\sigma}$ and $K(y)=K(-y)$. Then
\begin{equation*}
\|Lu\|_{C^{\alpha}(\mathbb R^n)}\leq C\|u\|_{C^{\sigma+\alpha}(\mathbb R^n)},
\end{equation*}
where $L$ is defined in $(\ref{eqq2.1})$ and $C$ is a positive constant depending on $n$, $\alpha$, $\sigma_0$ and $\Lambda$.
\end{lemma}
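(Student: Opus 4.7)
The plan is to estimate $\|Lu\|_{L^\infty(\mathbb R^n)}$ and the seminorm $[Lu]_{C^\alpha(\mathbb R^n)}$ separately, using the symmetry of $K$ to write $Lu(x)=\int_{\mathbb R^n}\delta u(x,y)K(y)\,dy$ and splitting the $y$-integral at a scale adapted to the quantity being bounded. The basic ingredient is the symmetric second-difference estimate
\[
|\delta u(x,y)|\le C[u]_{C^{\sigma+\alpha}(\mathbb R^n)}|y|^{\sigma+\alpha},
\]
obtained by Taylor expanding $u(x\pm y)$ around $x$ and using the cancellation of odd-order terms in $\delta u$.

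For the sup bound I split at $|y|=1$. On $\{|y|\le 1\}$ the second-difference estimate combined with $K(y)\le(2-\sigma)\Lambda|y|^{-n-\sigma}$ gives $\int_{|y|\le 1}|\delta u(x,y)|K(y)\,dy\le C[u]_{C^{\sigma+\alpha}}$. On $\{|y|>1\}$ the crude bound $|\delta u(x,y)|\le 4\|u\|_{L^\infty}$ together with $\int_{|y|>1}|y|^{-n-\sigma}\,dy<\infty$ handles the remaining piece, yielding $\|Lu\|_{L^\infty}\le C\|u\|_{C^{\sigma+\alpha}}$ with $C=C(n,\alpha,\sigma_0,\Lambda)$.

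For the Hölder seminorm, fix $x_1,x_2\in\mathbb R^n$ with $r:=|x_1-x_2|$; after disposing of $r\ge 1$ via the $L^\infty$ bound, I split $Lu(x_1)-Lu(x_2)=\int_{\mathbb R^n}[\delta u(x_1,y)-\delta u(x_2,y)]K(y)\,dy$ at $|y|=r$. On $\{|y|\le r\}$ the second-difference estimate applied to each $\delta u(x_i,y)$ yields a contribution bounded by $C[u]_{C^{\sigma+\alpha}}r^\alpha$. On $\{|y|>r\}$ I regard $\phi_y(x):=\delta u(x,y)$ as a function of $x$ enjoying the two endpoint estimates $\|\phi_y\|_{L^\infty(\mathbb R^n)}\le C[u]_{C^{\sigma+\alpha}}|y|^{\sigma+\alpha}$ and $[\phi_y]_{C^{\sigma+\alpha}(\mathbb R^n)}\le 2[u]_{C^{\sigma+\alpha}}$, and apply Hölder interpolation to get
\[
[\phi_y]_{C^\beta(\mathbb R^n)}\le C[u]_{C^{\sigma+\alpha}}|y|^{\sigma+\alpha-\beta}
\]
for any $\beta\in[0,\sigma+\alpha]$. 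Fixing any $\beta\in(\alpha,\min\{1,\sigma+\alpha\})$, which is nonempty since $\alpha<1$ and $\sigma>0$, one has $|\phi_y(x_1)-\phi_y(x_2)|\le[\phi_y]_{C^\beta}r^\beta$, and integrating $|y|^{\sigma+\alpha-\beta}|y|^{-n-\sigma}=|y|^{\alpha-\beta-n}$ over $\{|y|>r\}$ (convergent at infinity because $\beta>\alpha$) produces a far-field contribution of order $Cr^\alpha$.

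The main obstacle is to ensure that both halves of the split contribute the same power $r^\alpha$. The naive estimate $|\phi_y(x_1)-\phi_y(x_2)|\le 4[u]_{C^{\sigma+\alpha}}r^{\sigma+\alpha}$ fails when $\sigma+\alpha>1$, and the gradient-based bound $\le\|Du\|_{L^\infty}r$ produces a tail integrable at infinity only when $\sigma>1$; the interpolation above trades $|y|$-growth against $r$-decay through a fractional exponent $\beta\in(\alpha,1)$ and handles every subregime of $(\sigma,\alpha)$ uniformly. A minor adjustment (replacing the bound $|\delta u(x,y)|\le C|y|^{\sigma+\alpha}$ by $C|y|^2$ near the origin) takes care of the regime $\sigma+\alpha>2$, should it arise.
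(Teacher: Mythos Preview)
The paper does not prove this lemma; it is quoted without proof from \cite{TJ} (see the sentence ``We first review some properties of $L$ defined in $(\ref{eqq2.1})$, see \cite{TJ}'' preceding Lemmas~\ref{le2.1}--\ref{le2.4}). Your argument is therefore the only proof on the table, and it is correct. The $L^\infty$ bound is standard, and your treatment of the H\"older seminorm via the interpolated estimate
\[
|\delta u(x_1,y)-\delta u(x_2,y)|\le C[u]_{C^{\sigma+\alpha}}\,|y|^{\sigma+\alpha-\beta}\,|x_1-x_2|^{\beta},\qquad \beta\in(\alpha,\min\{1,\sigma+\alpha\}),
\]
is an elegant way to handle all regimes of $(\sigma,\alpha)$ uniformly; choosing, say, $\beta=\min\{\tfrac{1+\alpha}{2},\alpha+\tfrac{\sigma_0}{2}\}$ makes the resulting constant depend only on $n,\alpha,\sigma_0,\Lambda$ as required. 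Two minor remarks: the bound $[\phi_y]_{C^{\sigma+\alpha}}\le 2[u]_{C^{\sigma+\alpha}}$ should read $\le 4[u]_{C^{\sigma+\alpha}}$, and in the regime $\sigma+\alpha>2$ your closing comment about replacing $|y|^{\sigma+\alpha}$ by $|y|^{2}$ near the origin does need to be propagated to the far-field estimate as well (one uses $\|D_x\phi_y\|_{L^\infty}\le C\min\{1,|y|^{\sigma+\alpha-1}\}$ there), but this is routine and does not affect the structure of your argument.
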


\begin{lemma}\label{le2.3}
Suppose that $u\in C^{\sigma+\alpha}(B_2(0))\cap L^{\infty}(\mathbb R^n)$, $0\leq K(y)\leq (2-\sigma)\Lambda |y|^{-n-\sigma}$, $K(y)=K(-y)$ and $|DK(y)|\leq \Lambda |y|^{-n-\sigma-1}$. Then
\begin{equation*}
\|Lu\|_{C^{\alpha}(B_1(0))}\leq C(\|u\|_{C^{\sigma+\alpha}(B_2(0))}+\|u\|_{L^{\infty}(\mathbb R^n)}),
\end{equation*}
where $L$ is defined in $(\ref{eqq2.1})$ and $C$ is a positive constant depending on $n$, $\alpha$, $\sigma_0$ and $\Lambda$.
\end{lemma}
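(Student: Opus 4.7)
The plan is to reduce the local regularity statement to the global estimate of Lemma \ref{le2.2} by a cutoff decomposition of $u$, together with a tail estimate that exploits the kernel smoothness hypothesis $|DK(y)|\leq\Lambda|y|^{-n-\sigma-1}$.

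First I would choose $\eta\in C_c^{\infty}(\mathbb R^n)$ with $\eta\equiv 1$ on $B_{3/2}(0)$ and $\eta\equiv 0$ outside $B_{7/4}(0)$, and write $u=u_1+u_2$ with $u_1:=\eta u$ and $u_2:=(1-\eta)u$. A standard H\"older product estimate, together with $u\in C^{\sigma+\alpha}(B_2(0))\cap L^{\infty}(\mathbb R^n)$, gives
\begin{equation*}
\|u_1\|_{C^{\sigma+\alpha}(\mathbb R^n)}\leq C\bigl(\|u\|_{C^{\sigma+\alpha}(B_2(0))}+\|u\|_{L^{\infty}(\mathbb R^n)}\bigr),
\end{equation*}
so that Lemma \ref{le2.2} applied to $u_1$ yields $\|Lu_1\|_{C^{\alpha}(\mathbb R^n)}\leq C\|u_1\|_{C^{\sigma+\alpha}(\mathbb R^n)}$, which is already of the required form on $B_1(0)$.

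For the remainder, the key observation is that $u_2\equiv 0$ on $B_{3/2}(0)$, so in particular $u_2(x)=0$ for every $x\in B_1(0)$. Using the symmetry $K(y)=K(-y)$ and the change of variable $z=x+y$, one obtains
\begin{equation*}
Lu_2(x)=2\int_{\mathbb R^n}u_2(z)K(z-x)\,dz,
\end{equation*}
with integrand supported in $\{|z|\geq 3/2\}$. The $L^{\infty}$ bound on $B_1(0)$ is immediate from $K(z-x)\leq(2-\sigma)\Lambda|z-x|^{-n-\sigma}$ and the integrability of $|z|^{-n-\sigma}$ at infinity. For the $C^{\alpha}$ seminorm, I would subtract the same identity at a point $x'\in B_1(0)$ to get
\begin{equation*}
Lu_2(x)-Lu_2(x')=2\int u_2(z)\bigl[K(z-x)-K(z-x')\bigr]\,dz,
\end{equation*}
and apply the mean value theorem together with $|DK(\xi)|\leq\Lambda|\xi|^{-n-\sigma-1}$. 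Since $|\xi|\gtrsim|z|$ uniformly on the support of $u_2$, the finite integral $\int_{|z|\geq 3/2}|z|^{-n-\sigma-1}\,dz$ produces the Lipschitz bound $|Lu_2(x)-Lu_2(x')|\leq C|x-x'|\|u\|_{L^{\infty}(\mathbb R^n)}$, which is stronger than $C^{\alpha}$ on the bounded set $B_1(0)$.

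Adding the two estimates proves the lemma. The only technical point I expect to need care with is the product estimate on $\|u_1\|_{C^{\sigma+\alpha}(\mathbb R^n)}$ when $\sigma+\alpha$ crosses an integer, so that the H\"older norm must be split into classical-derivative $L^{\infty}$ parts and a residual H\"older seminorm; once that bookkeeping is set up, the splitting argument is routine and the constants remain uniform in $\sigma\in[\sigma_0,2)$ because the prefactor $(2-\sigma)$ in the kernel bound absorbs the usual divergences of the radial integrals.
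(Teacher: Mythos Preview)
The paper does not actually supply its own proof of this lemma; it is stated in Section~2 as one of several properties of $L$ reviewed from \cite{TJ}, with no argument given. So there is nothing in the paper to compare your proposal against directly.

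That said, your argument is correct and is exactly the standard route one takes for such a local Schauder-type estimate: cut off to reduce the near part to the global estimate of Lemma~\ref{le2.2}, and handle the far part by writing $Lu_2(x)=2\int u_2(z)K(z-x)\,dz$ and differentiating in $x$ using the hypothesis $|DK|\leq\Lambda|y|^{-n-\sigma-1}$. Your observation that this last step actually gives a Lipschitz bound on $Lu_2$ in $B_1(0)$ (hence certainly $C^\alpha$) is right, and your caveat about the product estimate when $\sigma+\alpha$ crosses an integer is the only genuine bookkeeping issue. The uniformity in $\sigma\in[\sigma_0,2)$ is indeed taken care of by the $(2-\sigma)$ prefactor, as you note.
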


\begin{lemma}\label{le2.4}
Let $v\in C_c^{\sigma+\alpha}(B_{\frac{1}{2}}(0))$ be such that $\|v\|_{C^{\sigma+\alpha}(B_{\frac{1}{2}}(0))}\leq 1$, and $p(x)$ be the Taylor polynomial of $v$ at $x=0$ of degree $[\sigma+\alpha]$. For any $L\in\mathcal{L}_0(\lambda,\Lambda,\sigma)$, there exists $P\in C_c^{\infty}(B_{\frac{1}{2}}(0))$ such that $P(x)=p(x)$ in $B_{\frac{1}{4}}(0)$, $\|P\|_{C^{4}(B_{\frac{1}{2}}(0))}\leq C$ and 
\begin{equation*}
LP(0)=Lv(0),
\end{equation*}
where $C$ is a positive constant depending on $n$, $\lambda$, $\Lambda$, $\sigma_0$ and $\alpha$.
\end{lemma}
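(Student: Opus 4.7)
The plan is to build $P$ by adding two pieces: a smooth compactly supported extension of the Taylor polynomial that already equals $p$ on $B_{1/4}(0)$, plus a small correction supported in the annulus $B_{1/2}(0)\setminus B_{1/4}(0)$ whose purpose is to match the value $Lv(0)$. Fix once and for all a cutoff $\phi\in C_c^\infty(B_{1/2}(0))$ with $\phi\equiv 1$ on $B_{1/4}(0)$ and set $P_0:=\phi p$. Since $\sigma+\alpha<3$, the Taylor polynomial $p$ has degree at most $2$, and $\|p\|_{C^2(B_{1/2}(0))}\leq C\|v\|_{C^{\sigma+\alpha}(B_{1/2}(0))}\leq C$, so $\|P_0\|_{C^4(B_{1/2}(0))}\leq C$ with $C$ depending only on $n$ and $\alpha$. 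Moreover $P_0\equiv p$ on $B_{1/4}(0)$ and $P_0\in C_c^\infty(B_{1/2}(0))$.

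Next, fix a radial bump $\eta\in C_c^\infty(B_{1/2}(0)\setminus\overline{B_{1/4}(0)})$ with $\eta\geq 0$ and $\eta\not\equiv 0$, depending only on $n$. Since $\eta(0)=0$ and $\eta$ is symmetric, for every $L\in\mathcal{L}_0(\lambda,\Lambda,\sigma)$
\begin{equation*}
L\eta(0)=\int_{\mathbb R^n}\bigl(\eta(y)+\eta(-y)\bigr)K(y)\,dy=2\int_{\mathbb R^n}\eta(y)K(y)\,dy,
\end{equation*}
and the ellipticity assumption $(\ref{ell})$ together with the fact that $\eta$ is supported where $1/4\leq|y|\leq 1/2$ yields
\begin{equation*}
c_1(n,\sigma_0)\lambda\leq L\eta(0)\leq c_2(n,\sigma_0)\Lambda,
\end{equation*}
uniformly in $L\in\mathcal{L}_0(\lambda,\Lambda,\sigma)$.

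Define the correction constant $c:=(Lv(0)-LP_0(0))/L\eta(0)$ and set $P:=P_0+c\eta$. By construction, $P\in C_c^\infty(B_{1/2}(0))$, $P\equiv p$ on $B_{1/4}(0)$, and $LP(0)=LP_0(0)+cL\eta(0)=Lv(0)$. It remains to show $|c|\leq C$, which is the only real content. Since $v\in C_c^{\sigma+\alpha}(B_{1/2}(0))$ has $C^{\sigma+\alpha}(\mathbb R^n)$-norm controlled by $1$, Lemma \ref{le2.2} gives $|Lv(0)|\leq C$ with $C$ depending on $n,\alpha,\sigma_0,\Lambda$; the same lemma applied to $P_0$, whose $C^{\sigma+\alpha}(\mathbb R^n)$-norm is controlled by $\|P_0\|_{C^4(B_{1/2}(0))}\leq C$, gives $|LP_0(0)|\leq C$. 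Combined with the universal lower bound $L\eta(0)\geq c_1\lambda>0$, this yields $|c|\leq C(n,\lambda,\Lambda,\sigma_0,\alpha)$, and therefore $\|P\|_{C^4(B_{1/2}(0))}\leq \|P_0\|_{C^4(B_{1/2}(0))}+|c|\,\|\eta\|_{C^4}\leq C$.

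The main subtlety is obtaining a bound on the correction coefficient $c$ that is independent of the specific kernel $K$; this is exactly where the uniform ellipticity $(\ref{ell})$ enters, through the lower bound $L\eta(0)\geq c_1\lambda$ on the annular bump, while Lemma \ref{le2.2} delivers the uniform upper bounds for $|Lv(0)|$ and $|LP_0(0)|$. Everything else reduces to keeping track of norms of fixed test functions.
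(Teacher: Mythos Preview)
The paper does not supply its own proof of this lemma; it is quoted from \cite{TJ} together with Lemmas~\ref{le2.1}--\ref{le2.3}. Your construction --- cutoff of the Taylor polynomial plus a scalar multiple of a fixed annular bump to match the value $Lv(0)$ --- is the natural approach and is essentially what one finds in \cite{TJ}. There is, however, one quantitative gap that matters because the constant in the lemma is required to be uniform for $\sigma\in[\sigma_0,2)$.

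Your claimed lower bound $L\eta(0)\geq c_1(n,\sigma_0)\lambda$ is not correct. Since $\eta$ is supported in $\{1/4\leq|y|\leq 1/2\}$ and the ellipticity bound \eqref{ell} carries the factor $(2-\sigma)$, one only obtains
\[
L\eta(0)=2\int_{\mathbb R^n}\eta(y)K(y)\,dy\;\geq\; c(n)\,(2-\sigma)\,\lambda,
\]
which degenerates as $\sigma\uparrow 2$. If you then bound $|Lv(0)|$ and $|LP_0(0)|$ separately via Lemma~\ref{le2.2}, you only get $|c|\leq C/\big((2-\sigma)\lambda\big)$, which is not uniform in $\sigma$, so the asserted control of $\|P\|_{C^4}$ does not follow. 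The remedy is to estimate the numerator as a \emph{difference}: since $(v-P_0)(0)=0$ and $v-P_0=v-p$ on $B_{1/4}(0)$ with $|(v-p)(y)|\leq C|y|^{\sigma+\alpha}$, a direct computation gives
\[
|L(v-P_0)(0)|\;\leq\;\int_{B_{1/4}}2C|y|^{\sigma+\alpha}\frac{(2-\sigma)\Lambda}{|y|^{n+\sigma}}\,dy
\;+\;\int_{\mathbb R^n\setminus B_{1/4}}C\,\frac{(2-\sigma)\Lambda}{|y|^{n+\sigma}}\,dy
\;\leq\; C(n,\sigma_0,\alpha)\,(2-\sigma)\,\Lambda,
\]
so that $|c|=|L(v-P_0)(0)|/L\eta(0)\leq C\Lambda/\lambda$, and then the rest of your argument goes through unchanged.
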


We borrow the following two approximation lemmas from \cite{TJ}.
\begin{lemma}\label{le2.5}{\rm\cite[Lemma A.1]{TJ}}
For some $\sigma\geq\sigma_0>0$, we consider nonlocal operators $I_0$, $I_1$ and $I_2$ uniformly elliptic with respect to $\mathcal{L}_0(\lambda,\Lambda,\sigma)$. Assume that $I_0$ is translation invariant and $I_0(0)=1$.

Given $M>0$, a modulus of continuity $w_1$ and $\epsilon>0$, there exist $\eta_1>0$ and $R>5$ such that if $u$, $v$, $I_0$, $I_1$ and $I_2$ satisfy
\begin{equation*}
I_0(v,x)=0,\quad I_1(u,x)\geq -\eta_1\quad \text{and}\quad I_2(u,x)\leq \eta_1\quad \text{in $B_4(0)$}
\end{equation*}
in the viscosity sense, and 
\begin{equation*}
\|I_1-I_0\|_{B_4(0)}\leq \eta_1,\quad \|I_2-I_0\|_{B_4(0)}\leq \eta_1,
\end{equation*}
\begin{equation*}
u=v\quad \text{in $\mathbb R^n\setminus B_4(0)$},
\end{equation*}
\begin{equation*}
\|u\|_{L^{\infty}(\mathbb R^n)}\leq M\quad \text{in $\mathbb R^n$},
\end{equation*}
and 
\begin{equation*}
|u(x)-u(y)|\leq w_1(|x-y|)\quad \text{for any $x\in B_R(0)\setminus B_4(0)$ and $y\in\mathbb R^n\setminus B_4(0)$},
\end{equation*}
then $|u-v|\leq \epsilon$ in $B_4(0)$.
\end{lemma}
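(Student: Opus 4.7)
The plan is a compactness-and-stability argument by contradiction. Assuming the conclusion fails for some fixed $M$, $w_1$ and $\epsilon$, we produce sequences $\eta_k\to 0$, $R_k\to\infty$, operators $I_0^k, I_1^k, I_2^k$ (with $I_0^k$ translation invariant and $I_0^k(0)=1$, all uniformly elliptic with respect to $\mathcal{L}_0(\lambda,\Lambda,\sigma)$), and functions $u_k, v_k$ satisfying all the listed hypotheses with parameters $\eta_k$ and $R_k$, yet with points $x_k\in\overline{B_4(0)}$ for which $|u_k(x_k)-v_k(x_k)|>\epsilon$. The aim is to extract a limit $(u_\infty, v_\infty, I_\infty)$ such that $u_\infty$ and $v_\infty$ both solve the same uniformly elliptic translation invariant equation $I_\infty w=0$ in $B_4(0)$ and coincide on $\mathbb R^n\setminus B_4(0)$; the comparison principle for nonlocal uniformly elliptic equations (Caffarelli--Silvestre) will then supply the contradiction with $|u_\infty(x_\infty)-v_\infty(x_\infty)|\geq \epsilon$.

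The first step is to produce compactness. The Caffarelli--Silvestre interior $C^\alpha$ estimate applies to $v_k$ since $I_0^k v_k=0$ in $B_4(0)$ with the source term controlled by $I_0^k(0)=1$; by the ellipticity definition combined with $\|I_i^k-I_0^k\|_{B_4(0)}\leq\eta_k$, the functions $u_k$ satisfy $M_{\mathcal{L}_0}^+u_k\geq -C(1+\eta_k)$ and $M_{\mathcal{L}_0}^-u_k\leq C(1+\eta_k)$ in $B_4(0)$ in the viscosity sense, so the same estimate gives uniform $C^\alpha_{loc}(B_4(0))$ bounds for $u_k$. Outside $B_4(0)$ the identity $u_k=v_k$ together with the hypothesis that $u_k$ has modulus $w_1$ on $B_{R_k}(0)\setminus B_4(0)$ transfers this modulus to $v_k$ there, and since $R_k\to\infty$ this yields a uniform modulus of continuity on every fixed compact set of $\mathbb R^n\setminus B_4(0)$. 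Combined with $\|u_k\|_{L^\infty(\mathbb R^n)}\leq M$ and the corresponding $L^\infty$ bound on $v_k$ in $B_4(0)$ (from viscosity comparison with constants together with $I_0^k(0)=1$), a diagonal Arzel\`a--Ascoli argument produces, along a subsequence, $u_k\to u_\infty$ and $v_k\to v_\infty$ locally uniformly in $\mathbb R^n$, with $u_\infty=v_\infty$ on $\mathbb R^n\setminus B_4(0)$ and with $|u_\infty(x_\infty)-v_\infty(x_\infty)|\geq\epsilon$ at the limit point $x_\infty\in\overline{B_4(0)}$ of $\{x_k\}$.

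Finally, one passes to the limit in the equations. Using the compactness of uniformly elliptic nonlocal operators in an appropriate pointwise topology on $C^{1,1}$ test functions (as developed in Jin--Xiong), one further extracts so that $I_0^k\to I_\infty$; the bounds $\|I_i^k-I_0^k\|_{B_4(0)}\leq\eta_k\to 0$ then force $I_i^k\to I_\infty$ for $i=1,2$ as well, and $I_\infty$ inherits translation invariance and uniform ellipticity. Stability of viscosity solutions yields $I_\infty v_\infty=0$ and, from $I_1^k u_k\geq -\eta_k$ and $I_2^k u_k\leq\eta_k$, also $I_\infty u_\infty=0$ in $B_4(0)$. Since $u_\infty=v_\infty$ in $\mathbb R^n\setminus B_4(0)$, the comparison principle forces $u_\infty=v_\infty$ in $B_4(0)$, contradicting the persistent gap at $x_\infty$. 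The principal obstacle is the nonlocal tail: the stability step requires passing to the limit inside integrals weighted by $(1+|y|^{n+\sigma})^{-1}$, and this is precisely what the exterior coincidence $u_k=v_k$ on $\mathbb R^n\setminus B_4(0)$ together with the exhausting modulus $w_1$ on the growing shells $B_{R_k}(0)\setminus B_4(0)$ are designed to control. Making the operator-topology compactness and the half-relaxed-limit stability step rigorous is the technical core of the proof.
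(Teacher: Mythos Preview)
The paper does not prove this lemma; it is quoted verbatim from Jin--Xiong \cite{TJ} (Lemma A.1 there) and used as a black box. So there is no ``paper's own proof'' to compare against. That said, your compactness--stability--comparison outline is exactly the scheme used in \cite{TJ} (which in turn adapts the approximation lemma of Caffarelli--Silvestre \cite{LL2}): negate the conclusion, extract limits of $u_k,v_k$ and of the operators, pass to the limit in the viscosity inequalities, and apply comparison for the limiting translation invariant equation.

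One point you glossed over deserves a flag. Your interior $C^\alpha$ bounds for $u_k,v_k$ degenerate at $\partial B_4(0)$, so Arzel\`a--Ascoli on compact subsets of $B_4(0)$ alone does not prevent the limit point $x_\infty$ from sitting on $\partial B_4(0)$ with a persistent gap. What is needed is equicontinuity \emph{up to} $\partial B_4(0)$, and this is where the exterior modulus $w_1$ is actually used: together with the boundary H\"older regularity for $M_{\mathcal L_0}^\pm$ (the nonlocal analogue of the barrier argument, see \cite{LL1,LL2}), it gives a uniform modulus of continuity for $u_k$ and $v_k$ across $\partial B_4(0)$. You gestured at this (``the exterior coincidence \dots together with the exhausting modulus $w_1$''), but the mechanism is boundary regularity, not just tail control in the stability step. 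With that addition your sketch matches the argument in \cite{TJ}.
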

and 
\begin{lemma}\label{le2.6}{\rm\cite[Lemma A.2]{TJ}}
For some $\sigma\geq\sigma_0>0$, we consider nonlocal operators $I_0$, $I_1$ and $I_2$ uniformly elliptic with respect to $\mathcal{L}_0(\lambda,\Lambda,\sigma)$. Assume that 
\begin{equation*}
I_0v(x):=\inf_{a\in\mathcal{A}}\Big\{\int_{\mathbb R^n}\delta v(x,y)K_a(y)dy+h_a(x)\Big\}\quad \text{in $B_4(0)$},
\end{equation*}
where each $K_a\in \mathcal{L}_2(\lambda,\Lambda,\sigma)$ and for some constant $\beta\in (0,1)$,
\begin{equation*}
[h_a]_{C^{\beta}(B_4(0))}\leq M_0\quad\text{and}\quad \inf_{a\in\mathcal{A}}h_a(x)=0,\quad \text{for any $x\in B_4(0)$}.
\end{equation*}
Given $M_0$, $M_1$, $M_2$, $M_3>0$, $R_0>5$, $0<\beta,\nu<1$ and $\epsilon>0$, there exists $\eta_2$ such that if $u$, $v$, $I_0$, $I_1$ and $I_2$ satisfy
\begin{equation*}
I_0(v,x)=0,\quad I_1(u,x)\geq-\eta_2\quad \text{and}\quad I_2(u,x)\leq \eta_2\quad \text{in $B_4(0)$},
\end{equation*}
in the viscosity sense and
\begin{equation*}
\|I_1-I_0\|_{B_4(0)}\leq\eta_2,\quad \|I_2-I_0\|_{B_4(0)}\leq\eta_2
\end{equation*}
\begin{equation*}
u=v\quad \text{in $\mathbb R^n\setminus B_4(0)$},
\end{equation*}
\begin{equation*}
u=0\quad \text{in $\mathbb R^n\setminus B_{R_0}(0)$},
\end{equation*}
\begin{equation*}
\|u\|_{L^\infty(\mathbb R^n)}\leq M_1,
\end{equation*}
\begin{equation*}
[u]_{C^{\nu}(B_{R_0-\tau}(0))}\leq M_2\tau^{-4},\quad \text{for any $0<\tau<1$},
\end{equation*}
\begin{equation*}
[v]_{C^{\sigma+\beta}(B_{4-\tau}(0))}\leq M_3\tau^{-4},\quad \text{for any $0<\tau<1$},
\end{equation*}
then $|u-v|\leq \epsilon$ in $B_4(0)$.
\end{lemma}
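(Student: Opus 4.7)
The plan is to argue by contradiction via a compactness argument, modeled on the analogous approximation results of Caffarelli--Silvestre and Jin--Xiong. Suppose the conclusion fails: then for each $k\in\mathbb N$ there exist functions $u_k$, $v_k$ and operators $I_0^k$, $I_1^k$, $I_2^k$ satisfying all the hypotheses with $\eta_2=1/k$, yet $\sup_{B_4(0)}|u_k-v_k|>\epsilon$. The task is to pass to the limit in $k$ and produce two solutions of a common limit problem which must agree, contradicting this bound.

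First I would extract convergent subsequences. The uniform bound $\|u_k\|_{L^\infty(\mathbb R^n)}\le M_1$ together with the assumption $u_k\equiv 0$ outside $B_{R_0}(0)$ and the weighted $C^\nu$ estimate $[u_k]_{C^\nu(B_{R_0-\tau}(0))}\le M_2\tau^{-4}$ gives local equicontinuity on every compact subset of $\mathbb R^n\setminus\partial B_{R_0}(0)$; Arzel\`a--Ascoli and a diagonal argument produce a subsequence (still indexed by $k$) with $u_k\to u_\infty$ locally uniformly on $\mathbb R^n$, and the limit vanishes outside $B_{R_0}(0)$. Similarly, $v_k=u_k$ outside $B_4(0)$ coincides with $u_k$ there, while inside $B_4(0)$ the estimate $[v_k]_{C^{\sigma+\beta}(B_{4-\tau}(0))}\le M_3\tau^{-4}$ again yields locally uniform convergence $v_k\to v_\infty$ with $v_\infty=u_\infty$ outside $B_4(0)$. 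Next, because each $I_0^k$ is an infimum over $a\in\mathcal A$ of operators with kernels in $\mathcal L_2(\lambda,\Lambda,\sigma)$, the derivative bound $(\ref{smo})$ gives equicontinuity of the kernels as measures on $\mathbb R^n\setminus\{0\}$; combined with $[h_a^k]_{C^\beta(B_4(0))}\le M_0$ and $\inf_a h_a^k=0$, one can extract a further subsequence so that $I_0^k$ converges to a limit operator $I_\infty$ of the same Bellman form (with $\inf_a h_a^\infty=0$), in the sense that $I_0^k w(x)\to I_\infty w(x)$ uniformly on $B_4(0)$ whenever $w\in C^2$ with sufficient decay. The hypothesis $\|I_1^k-I_0^k\|_{B_4(0)},\|I_2^k-I_0^k\|_{B_4(0)}\le 1/k$ forces $I_1^k$ and $I_2^k$ to converge to the same $I_\infty$.

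Using stability of viscosity solutions for nonlocal operators under this kind of convergence (precisely the point where one needs the $\mathcal L_2$ smoothness of the kernels, to test against smooth cutoffs of the Taylor polynomial as in Lemma \ref{le2.4}), the limit functions satisfy $I_\infty v_\infty = 0$ in $B_4(0)$ and $-0\le I_\infty u_\infty\le 0$, i.e., $I_\infty u_\infty=0$ in $B_4(0)$, in the viscosity sense. Both $u_\infty$ and $v_\infty$ are bounded viscosity solutions of the same Dirichlet problem on $B_4(0)$ with the same exterior data on $\mathbb R^n\setminus B_4(0)$, so by the comparison principle for uniformly elliptic nonlocal operators of Bellman type (applied to $I_\infty$, which is uniformly elliptic with respect to $\mathcal L_0(\lambda,\Lambda,\sigma)$) we conclude $u_\infty\equiv v_\infty$ on $\mathbb R^n$. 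Locally uniform convergence then yields $\|u_k-v_k\|_{L^\infty(B_4(0))}\to 0$, contradicting our standing assumption and proving the lemma.

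The main obstacle is the combined operator/solution compactness in the third step: one has to show that the Bellman structure of $I_0^k$ with its $C^\beta$ inhomogeneities $h_a^k$ is preserved in the limit (so that $I_\infty$ is again uniformly elliptic and admits a comparison principle), and that viscosity solutions of the $I_j^k$-equations pass to viscosity solutions of the $I_\infty$-equation despite the two simultaneous perturbations $\|I_j^k-I_0^k\|\le 1/k$ and the $k$-dependence of the inhomogeneous terms. The $\mathcal L_2$ regularity of the kernels is what makes all the nonlocal expressions continuous in the kernel parameters and allows the stability argument to go through; the remaining estimates (weighted $C^\nu$/$C^{\sigma+\beta}$ bounds) are tailored precisely to supply the equicontinuity needed for the Arzel\`a--Ascoli step on all of $\mathbb R^n$, which is essential because the nonlocal operators see the whole space.
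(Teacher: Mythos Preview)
The paper does not supply its own proof of this lemma; it is quoted verbatim from \cite[Lemma~A.2]{TJ} as one of two ``approximation lemmas'' borrowed from that reference. Your compactness/contradiction outline is precisely the standard route used for such results (going back to \cite{LL2} and refined in \cite{TJ}): extract subsequences of $u_k$, $v_k$ via the weighted H\"older bounds and Arzel\`a--Ascoli, extract a limit operator $I_\infty$ using the weak-convergence/compactness theory for uniformly elliptic nonlocal operators, pass to the limit via stability of viscosity solutions, and conclude by comparison. You have also correctly flagged the one genuinely delicate point---preservation of the Bellman structure of $I_0^k$ (kernels in $\mathcal L_2$ and $C^\beta$ inhomogeneities $h_a^k$ with $\inf_a h_a^k=0$) under passage to the limit, so that $I_\infty$ remains uniformly elliptic and admits a comparison principle; this is exactly where the $\mathcal L_2$ smoothness of the kernels is used and where \cite{TJ} goes beyond the earlier lemma in \cite{LL2}. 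In short, your sketch is correct and matches the cited argument.
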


We now introduce a modification of Evans-Krylov theorem for concave translation invariant nonlocal fully nonlinear equations.
\begin{theorem}\label{th1.1}{\rm\cite[Theorem 2.1]{TJ}}
Assume that $K_a(y)\in \mathcal{L}_2(\lambda,\Lambda,\sigma)$ with $2>\sigma\geq\sigma_0>1$ and $b_a$ is a constant for any $a\in\mathcal{A}$. If $u$ is a bounded viscosity solution of 
\begin{equation*}
\inf_{a\in\mathcal{A}}\Big\{\int_{\mathbb R^n}\delta u(x,y)K_a(y)dy+b_a\Big\}=0,\quad \text{in $B_1(0)$},
\end{equation*}
then $u\in C^{\sigma+\bar\alpha}(B_\frac{1}{2}(0))$ with
\begin{equation*}
\|u\|_{C^{\sigma+\bar\alpha}(B_\frac{1}{2}(0))}\leq C(\|u\|_{L^{\infty}(\mathbb R^n)}+|\inf_ab_a|),
\end{equation*}
where $\bar\alpha$ and $C$ are positive constants depending on $n$, $\sigma_0$, $\lambda$ and $\Lambda$.
\end{theorem}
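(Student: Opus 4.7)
The plan is to adapt the Caffarelli--Silvestre nonlocal Evans--Krylov proof in \cite{LL3} to the concave operator $Iu:=\inf_{a\in\mathcal{A}}\{L_au+b_a\}$ with additional constants $b_a$; because $2>\sigma\geq\sigma_0>1$, the order of the operator is strictly greater than one and genuine second-difference methods are available. First I would observe that $I$ is uniformly elliptic with respect to $\mathcal{L}_2(\lambda,\Lambda,\sigma)$ with $|I(0)|\leq|\inf_ab_a|$, so the Caffarelli--Silvestre interior $C^{1+\alpha}$ estimates yield $u\in C^{1+\alpha}_{\rm loc}(B_1(0))$ with a quantitative bound in terms of $\|u\|_{L^\infty(\mathbb R^n)}$ and $|\inf_ab_a|$; in particular $u$ is a pointwise solution inside $B_1(0)$.

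The concavity step is the heart of the argument. Fix $h\in\mathbb R^n$ with $|h|$ small and consider the second difference $g_h(x):=\delta u(x,h)=u(x+h)+u(x-h)-2u(x)$. For any fixed $a$, translation invariance of $L_a$ gives $L_ag_h(x)=L_au(x+h)+L_au(x-h)-2L_au(x)$. Choosing the optimal index $a^*=a^*(x)$ at the point $x$ yields $L_{a^*}u(x)+b_{a^*}=0$ and $L_{a^*}u(x\pm h)+b_{a^*}\geq0$, so $L_{a^*}g_h(x)\geq0$. Hence $M^+_{\mathcal{L}_2}g_h\geq0$ pointwise in $B_{1-|h|}(0)$, and also in the viscosity sense after a standard regularization. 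Crucially, the constants $b_a$ cancel out of this computation and play no further role.

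I would then iterate on the dyadic family of balls $B_{r_k}(0)$ with $r_k=2^{-k}$. At each scale one compares $u$ with the solution $P_k$ of a suitable constant-coefficient extremal problem whose exterior data match those of $u$; the H\"older estimate for subsolutions of $M^+_{\mathcal{L}_0}$ applied to $g_h$, after the natural $r_k^{-\sigma}$ rescaling, produces a geometric improvement for the quantity $\sup_{|h|\leq r_k}r_k^{-\sigma-\bar\alpha}\|\delta u(\cdot,h)-\delta P_k(\cdot,h)\|_{L^\infty(B_{r_k}(0))}$. Summing over $k$ recovers $u\in C^{\sigma+\bar\alpha}(B_{1/2}(0))$ with the stated norm bound. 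Using solutions of constant-coefficient extremal problems as approximators, rather than polynomials, is essential to keep the comparison uniformly controlled at infinity, a point already emphasized in \cite{TJ}.

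The main obstacle is the one familiar from nonlocal Evans--Krylov theory: $g_h$ is only globally controlled by $4\|u\|_{L^\infty(\mathbb R^n)}$ and carries no a priori regularity outside $B_{1-|h|}(0)$, so a naive application of the interior H\"older estimate loses the factor needed to close the iteration. The $\mathcal{L}_2$ smoothness hypothesis $|D_y^iK_a(y)|\leq\Lambda(2-\sigma)|y|^{-n-\sigma-i}$ for $i=1,2$ is exactly what allows one to differentiate $L_au$ once or twice and recover the missing quantitative estimate; this bookkeeping is carried out precisely as in Theorem~2.1 of \cite{TJ}. The additive constants $b_a$ introduce no new difficulty since, as noted, they vanish under every second difference.
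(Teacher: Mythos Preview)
This theorem is not proved in the present paper; it is quoted directly from \cite[Theorem~2.1]{TJ}, so there is no in-paper argument to compare against. Your outline correctly isolates the essential mechanism: concavity plus translation invariance yield $M^+_{\mathcal{L}_2}\,\delta u(\cdot,h)\geq 0$, and the additive constants $b_a$ cancel in every second difference, so the nonlocal Evans--Krylov machinery of \cite{LL3} (as adapted in \cite{TJ}) applies with only the right-hand side $|\inf_a b_a|$ entering the final estimate.

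One caution on the iteration paragraph: you are conflating two distinct strategies. The Evans--Krylov argument in \cite{LL3}, and its version in \cite[Theorem~2.1]{TJ}, does \emph{not} proceed by comparing $u$ at each dyadic scale with an approximating solution $P_k$ of an extremal problem. Instead it shows directly that the nonlocal second-order quantity $x\mapsto\int_{\mathbb R^n}|\delta u(x,y)|\,(2-\sigma)|y|^{-n-\sigma}\,dy$ is H\"older continuous, by applying the half-Harnack inequality to the functions
\[
P(x)=\int\big(\delta u(x,y)-\delta u(0,y)\big)^+\frac{2-\sigma}{|y|^{n+\sigma}}\,dy,\qquad
N(x)=\int\big(\delta u(x,y)-\delta u(0,y)\big)^-\frac{2-\sigma}{|y|^{n+\sigma}}\,dy,
\]
and exploiting the two-sided relation $\frac{\lambda}{\Lambda}N\leq P\leq\frac{\Lambda}{\lambda}N$ that follows from uniform ellipticity (compare Lemma~\ref{le3.5} in Section~3 here, which reproduces exactly this step in the recursive setting). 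The comparison-with-approximators scheme you describe is the \emph{perturbative} mechanism used in Section~4 of this paper (and in the Schauder part of \cite{TJ}) to pass from translation-invariant to $x$-dependent kernels; it sits on top of, rather than inside, the Evans--Krylov step. Since you ultimately defer to \cite{TJ} for the details this is not a gap in correctness, but the two arguments have genuinely different structures and should not be merged.
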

In the rest of this paper, $\bar\alpha$ will always be the constant from Theorem \ref{th1.1}. We recall the definition of Dini modulus of continuity.
\begin{definition}
We say that $w(t)$ is a Dini modulus of continuity, if it satisfies
\begin{equation}\label{eq2.1}
\int_0^{t_0}\frac{w(r)}{r}dr<+\infty,\quad \text{for some $t_0>0$}.
\end{equation}
\end{definition}
We will make some additional assumption on our Dini modulus of continuity $w(t)$. Let $\bar\beta>0$ and $0<\sigma<2$.
\begin{itemize}
\item[{\rm $(H1)_{\bar\beta}$}]
There exists some $0<\beta<\bar\beta$ such that
\begin{equation}\label{H1}
\lim_{\mu\to0^+}\sup_{i\in\mathbb{N}}\frac{\mu^{\beta}w(\mu^i)}{w(\mu^{i+1})}=0.
\end{equation}
\item[{\rm $(H1)_{\bar\beta,\sigma}$}]
There exists some $0<\beta<\min\{2-\sigma,\bar\beta\}$ such that $(\ref{H1})$ holds.
\item[{\rm $(H2)_{\bar\beta,\sigma}$}]
Let $w(t)$ be a Dini modulus of continuity satisfying $(H1)_{\bar\beta,\sigma}$. There exists another Dini modulus of continuity $\tilde w(t)$ satisfying $(H1)_{\bar\beta,\sigma}$ such that, for any small $0<s\leq1$ and $0\leq t\leq 1$ we have
\begin{equation*}
w(st)\leq \eta(s) \tilde w(t),
\end{equation*}
where $\eta(s)$ is a positive function of $s$ such that $\lim_{s\to 0^+}\eta(s)=0$.
\end{itemize}
\begin{remark}
For any $\bar\beta>0$ and $0<\sigma<2$, we define 
\begin{equation*}
\mathcal{S}_{\bar\beta,\sigma}:=\{\text{Dini modulus of continuity satifying $(H2)_{\bar\beta,\sigma}$}\}.
\end{equation*}
It is obvious that $w(t)=t^{\alpha}\in \mathcal{S}_{\bar\beta,\sigma}$ for any $0<\alpha<\min\{\bar\beta,2-\sigma\}$ and $\cap_{\bar\beta>0,0<\sigma<2}\mathcal{S}_{\bar\beta,\sigma}$ does not contain any modulus of $w(t)=t^{\alpha}$.  
\end{remark}
\begin{lemma}
$\cap_{\bar\beta>0,0<\sigma<2}S_{\bar\beta,\sigma}\not=\emptyset$.
\end{lemma}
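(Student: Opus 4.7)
I propose to take
\[
w(t) := (\log(e/t))^{-(1+\delta)} \text{ for } t\in (0,1],\qquad w(0):=0,
\]
with a fixed $\delta>1$, e.g.\ $\delta=2$. Since this $w$ depends on neither $\bar\beta$ nor $\sigma$, it will suffice to verify once that $w\in \mathcal{S}_{\bar\beta,\sigma}$ for every $\bar\beta>0$ and every $\sigma\in(0,2)$. That $w$ is a monotone Dini modulus of continuity follows at once from the substitution $u=\log(e/r)$, which transforms $\int_0^{t_0}w(r)/r\,dr$ into $\int_{\log(e/t_0)}^{+\infty}u^{-1-\delta}\,du<+\infty$.

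For $(H1)_{\bar\beta,\sigma}$, the quotient
\[
\frac{w(\mu^i)}{w(\mu^{i+1})} = \Bigl(\frac{(i+1)\log(1/\mu)+1}{i\log(1/\mu)+1}\Bigr)^{1+\delta}
\]
is decreasing in $i\geq 0$, so its supremum in $i$ is attained at $i=0$ and equals $(\log(1/\mu)+1)^{1+\delta}$. For any choice of $\beta\in(0,\min\{\bar\beta,2-\sigma\})$, a standard polynomial-versus-logarithmic comparison gives $\mu^{\beta}(\log(1/\mu)+1)^{1+\delta}\to 0$ as $\mu\to 0^+$, which is exactly the limit required in (\ref{H1}).

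For $(H2)_{\bar\beta,\sigma}$, I would set $\tilde w(t):=(\log(e/t))^{-(1+\delta)/2}$. Using the identity $\log(e/(st))=\log(e/t)+\log(1/s)$ together with the AM--GM bound $(a+b)^{1+\delta}\geq 2^{1+\delta}(ab)^{(1+\delta)/2}$ applied with $a=\log(e/t)\geq 1$ and $b=\log(1/s)>0$, one obtains
\[
w(st)\leq \eta(s)\,\tilde w(t),\qquad \eta(s):=2^{-(1+\delta)}(\log(1/s))^{-(1+\delta)/2},
\]
for all small $s\in(0,1)$ and all $t\in(0,1]$, with $\eta(s)\to 0$ as $s\to 0^+$. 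That $\tilde w$ is still Dini follows from the same substitution as in the first paragraph, now convergent precisely because $(1+\delta)/2>1$, and $\tilde w$ satisfies $(H1)_{\bar\beta,\sigma}$ by the identical supremum argument used for $w$. The only real subtlety of the proof is this exponent balance: the AM--GM split necessarily halves the exponent $1+\delta$ to $(1+\delta)/2$, so one must take $\delta>1$ (rather than merely $\delta>0$) in order to keep $\tilde w$ Dini. Everything else is routine computation.
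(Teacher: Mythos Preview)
Your proof is correct and uses the same family of examples as the paper: both take $w$ to be a negative power of a logarithm and take $\tilde w$ to be a weaker negative power of the same logarithm. The paper works with $w(t)=(\ln\frac{1}{t})^{\kappa-1}$ for $\kappa<0$ and chooses $\tilde w(t)=(\ln\frac{1}{t})^{\kappa/2-1}$, so only the excess over the critical exponent $-1$ is halved; this keeps $\tilde w$ Dini for every $\kappa<0$ (equivalently every $\delta>0$ in your notation). Your AM--GM split halves the full exponent $-(1+\delta)\mapsto -(1+\delta)/2$, which forces the stronger restriction $\delta>1$ that you correctly flag. The trade-off is that your argument gives an explicit $\eta(s)$ in one line, whereas the paper verifies $(H2)$ by a qualitative two-case $\epsilon$--$\delta$ argument (first $t$ small, then $t$ bounded away from $0$ and $s\to 0^+$) without producing a closed-form $\eta$. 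Your shift $\log(e/t)$ in place of $\ln(1/t)$ is a harmless cosmetic improvement that keeps $w(1)$ finite; either variant suffices to show the intersection is nonempty.
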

\begin{proof}
We claim that $w(t)=(\ln\frac{1}{t})^{\kappa-1}\in\cap_{\bar\beta>0,0<\sigma<2}\mathcal{S}_{\bar\beta,\sigma}$ for any $\kappa<0$. For any fixed $\bar\beta>0$ and $0<\sigma<2$, it is easy to verify that $w(t)$ is a Dini modulus of continuity satisfying $(H1)_{\bar\beta,\sigma}$. Now let us prove that $w(t)$ satisfies $(H2)_{\bar\beta,\sigma}$. For any $0<s<1$, we have
\begin{equation*}
w(st)=(\ln\frac{1}{st})^{\kappa-1}=\frac{(\ln\frac{1}{st})^{\kappa-1}}{(\ln\frac{1}{t})^{\frac{\kappa}{2}-1}}(\ln\frac{1}{t})^{\frac{\kappa}{2}-1}.
\end{equation*}
We notice that ($\ln\frac{1}{t})^{\frac{\kappa}{2}-1}$ is also a Dini modulus of continuity satisfying $(H1)_{\bar\beta,\sigma}$. For any $\epsilon>0$, there exists a sufficiently small constant $\delta_0>0$ depending only on $\epsilon$ such that
\begin{equation*}
\frac{(\ln\frac{1}{st})^{\kappa-1}}{(\ln\frac{1}{t})^{\frac{\kappa}{2}-1}}=\frac{(\ln\frac{1}{s}+\ln\frac{1}{t})^{\kappa-1}}{(\ln\frac{1}{t})^{\frac{\kappa}{2}-1}}<\epsilon,\quad\text{if $t< \delta_0$}.
\end{equation*}
Then there exists a sufficiently small constant $\delta_1>0$ depending only on $\epsilon$ such that 
\begin{equation*}
\frac{(\ln\frac{1}{st})^{\kappa-1}}{(\ln\frac{1}{t})^{\frac{\kappa}{2}-1}}<\epsilon,\quad\text{if $\delta_0\leq t<1$ and $0<s<\delta_1$}.
\end{equation*}
\end{proof}

\section{A recursive Evans-Krylov theorem}

The following theorem is a version of the recursive Evans-Krylov theorem we will use to prove $C^{\sigma}$ interior regularity.
\begin{theorem}\label{th3.1}
Assume that $2>\sigma\geq \sigma_0>0$, $b_a$ is a constant and $K_a(y)\in\mathcal{L}_2(\lambda,\Lambda,\sigma)$ for any $a\in\mathcal{A}$. Assume that $w$ is a modulus of continuity which satisfies $(H1)_{\bar\beta}$ where $\bar\beta$ depends on $n$, $\sigma_0$, $\lambda$, $\Lambda$. For each $m\in\mathbb N\cup\{0\}$, let $\{v_l\}_{l=0}^m$ be a sequence of functions satisfying $(\ref{eqq3.1})$
in the viscosity sense for any $j=0,1,\cdots,m$, where $K_a^j(x):=\rho^{j(n+\sigma)}K_a(\rho^jx)$ and $\rho\in (0,1)$. Suppose that $\|v_l\|_{L^{\infty}(\mathbb R^n)}\leq 1$ for any $l=0,1,\cdots,m$ and $|\inf_{a\in\mathcal{A}}b_a|\leq 1$. Then, there exist a sufficiently large constant $C>0$ and a sufficiently small constant $\rho_0>0$, both of which depend on $n$, $\sigma_0$, $\lambda$, $\Lambda$ and $w$, such that $v_l\in C^{\sigma+\bar\beta}(B_1(0))$ and, if $\rho\leq\rho_0$, we have
\begin{equation}\label{eq3.1}
\|v_l\|_{C^{\sigma+\bar\beta}(B_1(0))}\leq C,\quad \text{for any $l=0,1,\cdots,m$}.
\end{equation}
\end{theorem}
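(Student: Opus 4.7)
The plan is to prove Theorem \ref{th3.1} by induction on $l$, building on Theorem \ref{th1.1} and the approximation Lemma \ref{le2.6}, after first simplifying the equation (\ref{eqq3.1}). Performing the change of variable $Y=\rho^{j-l}y$ in each integral term and multiplying through by $w(\rho^{j})$, the $\rho^{-(j-l)\sigma}$ prefactor cancels against the rescaling Jacobian, and the equation takes the cleaner form
$$
\inf_{a\in\mathcal{A}}\Big\{\sum_{l=0}^{j}w(\rho^{l})L_{a}^{l}[v_{l}](\rho^{j-l}x)+b_{a}\Big\}=0\quad\text{in }B_{5}(0),
$$
where $L_{a}^{l}[v](x):=\int_{\mathbb R^{n}}\delta v(x,y)K_{a}^{l}(y)dy$. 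The base case $l=0$ comes from the $j=0$ equation; after dividing by $w(1)$ it is a concave translation-invariant HJB equation for $v_{0}$ to which Theorem \ref{th1.1} together with a covering argument yields $v_{0}\in C^{\sigma+\bar\alpha}(B_{2}(0))$ with universal bound.

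For the inductive step, I assume $\|v_{k}\|_{C^{\sigma+\bar\beta}(B_{2}(0))}\leq C$ for $k=0,\dots,l-1$ and look at the equation indexed by $j=l$. Isolating the $k=l$ summand and dividing by $w(\rho^{l})$ yields
$$
\inf_{a\in\mathcal{A}}\Big\{L_{a}^{l}[v_{l}](x)+\varphi_{a}(x)+\frac{b_{a}}{w(\rho^{l})}\Big\}=0,\qquad\varphi_{a}(x):=\sum_{k=0}^{l-1}\frac{w(\rho^{k})}{w(\rho^{l})}L_{a}^{k}[v_{k}](\rho^{l-k}x).
$$
By Lemma \ref{le2.3} and the inductive hypothesis, $\|L_{a}^{k}[v_{k}]\|_{C^{\bar\beta}(B_{1}(0))}\leq C$ uniformly in $a$ and $k$. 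The H\"older-scaling identity $[g(\rho^{l-k}\cdot)]_{C^{\bar\beta}(B_{5}(0))}=\rho^{(l-k)\bar\beta}[g]_{C^{\bar\beta}(B_{5\rho^{l-k}}(0))}$ then provides a factor $\rho^{(l-k)\bar\beta}$ from each summand, while $(H1)_{\bar\beta}$, used with some $0<\beta<\bar\beta$ and $\rho\leq\rho_{0}$ small, gives $w(\rho^{k})/w(\rho^{l})\leq(\epsilon(\rho)\rho^{-\beta})^{l-k}$ with $\epsilon(\rho)\to 0$; consequently
$$
[\varphi_{a}]_{C^{\bar\beta}(B_{5}(0))}\leq C\sum_{k<l}(\epsilon(\rho)\rho^{\bar\beta-\beta})^{l-k}\leq C',
$$
uniformly in $l$, once $\rho_{0}$ is chosen so that $\epsilon(\rho)\rho^{\bar\beta-\beta}<1/2$.

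Finally, this H\"older control of the source is promoted to H\"older regularity of $v_{l}$ by a Schauder-type iteration at geometric scales around a point $x_{0}\in B_{1}(0)$. At each scale $r$, I compare $v_{l}$ in $B_{r}(x_{0})$ with the solution $V$ of the frozen-coefficient equation $\inf_{a}\{L_{a}^{l}[V]+\varphi_{a}(x_{0})+b_{a}/w(\rho^{l})\}=0$ (with appropriate exterior data). Since the frozen equation is translation-invariant and concave with source constant in $x$, Theorem \ref{th1.1} gives $V\in C^{\sigma+\bar\alpha}$ with universal bound, and the smallness $\|I-I_{0}\|_{B_{r}(x_{0})}\leq C\,r^{\bar\beta}$ feeds, after rescaling $B_{r}(x_0)$ to $B_{4}$, into Lemma \ref{le2.6} to produce $|v_{l}-V|\leq\eta$ at any prescribed scale; iterating in the standard improvement-of-flatness fashion (subtracting the approximating Taylor polynomial of $V$ at each step) yields $v_{l}\in C^{\sigma+\bar\beta}(B_{2}(0))$ with uniform bound $C$, closing the induction. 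The main obstacle is the uniform control of $[\varphi_{a}]_{C^{\bar\beta}}$ across $l$: the ratio $w(\rho^{k})/w(\rho^{l})$ grows in $l-k$ for any Dini modulus, and the induction closes only because the positive H\"older-scaling factor $\rho^{(l-k)\bar\beta}$ combines with $(H1)_{\bar\beta}$ and the smallness of $\rho_{0}$ to convert the sum into a convergent geometric series---this is the step where the Dini bookkeeping replaces the polynomial bookkeeping of the H\"older case in \cite{TJ}.
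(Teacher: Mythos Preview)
Your reduction is correct and is essentially the same bookkeeping the paper carries out: after the change of variables, the equation for $v_{l}$ reads $\inf_{a}\{L_{a}^{l}[v_{l}](x)+\varphi_{a}(x)+b_{a}/w(\rho^{l})\}=0$, and the bound $[\varphi_{a}]_{C^{\bar\beta}(B_{5})}\leq C'$ uniformly in $l$ (via Lemma~\ref{le2.3} and the geometric series from $(H1)_{\bar\beta}$) is exactly what the paper establishes in its estimates (3.9)--(3.10). The induction frameworks also coincide.

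The gap is in your final step. The phrase ``iterating in the standard improvement-of-flatness fashion (subtracting the approximating Taylor polynomial of $V$ at each step)'' does not transfer to the nonlocal setting in the form you describe. Concretely: at step $j$ you would form $w_{j}(x)=r^{-j(\sigma+\bar\beta)}\big(v_{l}-\sum_{i<j}P_{i}\big)(x_{0}+r^{j}x)$ with $P_{i}$ a degree-$1$ polynomial. Since $\delta P_{i}=0$, the equation is unchanged, but the rescaled $w_{j}$ grows like $r^{-j(\sigma+\bar\beta)}\cdot r^{j}|x|=r^{j(1-\sigma-\bar\beta)}|x|$ for large $|x|$ (here $\sigma+\bar\beta>1$), so $w_{j}$ is neither bounded on $\mathbb{R}^{n}$ nor compactly supported. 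Both Lemma~\ref{le2.5} (which needs $\|u\|_{L^{\infty}(\mathbb{R}^{n})}\leq M$) and Lemma~\ref{le2.6} (which needs $u=0$ outside $B_{R_{0}}$) therefore fail to apply at any step $j\geq 1$. This growth obstruction is precisely why Jin--Xiong replace polynomial correctors by \emph{solution} correctors; but using solution correctors at every scale regenerates a recursive system of the type (\ref{eqq3.1}), i.e.\ the very theorem you are proving.

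The paper avoids this circularity by not running a Schauder iteration at all. Instead it proves the inductive step by a direct Caffarelli--Silvestre Evans--Krylov argument: it decomposes $v_{k}=V_{k}^{1}+V_{k}^{2}$ (compactly supported $C^{\sigma+\bar\alpha}$ part plus smooth part, via Lemma~\ref{le2.4}), builds $\tilde v_{i+1}=v_{i+1}+R_{\rho}^{(1)i}$, and then establishes the chain of pointwise/half-Harnack estimates (Lemmas 3.2--3.5) that feed into the $P\approx\tfrac{\lambda}{\Lambda}N$ mechanism of \cite{LL3}; the conclusion then follows the template of \cite[Lemma~2.14 and Theorem~2.2]{TJ}. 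Your computation of $[\varphi_{a}]_{C^{\bar\beta}}$ could be salvaged into a valid proof by invoking, instead of a polynomial iteration, the $C^{\sigma+\bar\beta}$ Schauder estimate for translation-invariant concave nonlocal equations with $C^{\bar\beta}$ source already proved in \cite{TJ} (i.e.\ the H\"older case of the recursive theorem), thereby reducing the Dini statement to the H\"older one; but as written, the polynomial-subtraction scheme is the point where your argument breaks.
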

\begin{remark}\label{re3.1}
If $\sigma_0> 1$, then Theorem $\ref{th3.1}$ holds for $\bar\beta=\bar\alpha$.
\end{remark}
\noindent{\em Proof of Theorem \ref{th3.1}.} We will give the proof of Theorem \ref{th3.1} in the case $\sigma_0>1$. For the case $0<\sigma_0\leq 1$ the proof is similar. We adapt the approach from \cite{TJ}.

Let $M$ be a sufficiently large constant to be fixed later. By normalization, we can assume that 
\begin{equation*}
\|v_l\|_{L^{\infty}(\mathbb R^n)}\leq\frac{1}{M}\quad\text{and}\quad |\inf_{a\in\mathcal{A}}b_a|\leq\frac{1}{M},\quad\text{for any $l=0,1,\cdots,m$.}
\end{equation*}
Then we need to prove that $(\ref{eq3.1})$ holds for $C=1$. 

We will prove Theorem $\ref{th3.1}$ by induction on $m$. For the case of $m=0$, $(\ref{eq3.1})$ holds for $\bar\beta=\bar\alpha$ by Theorem $\ref{th1.1}$. Now we assume that Theorem \ref{th3.1} is true up to $m=i$ for any positive integer $i$. We want to show that the theorem is also true for $m=i+1$.
Define 
\begin{equation*}
R(x)=\sum_{l=0}^i\rho^{-(i-l)\sigma}w^{-1}(\rho^i)w(\rho^l)v_l(\rho^{i-l}x),
\end{equation*}
and, for any function $v$
\begin{equation*}
v_{\rho}^l(x)=\rho^{-\sigma}\frac{w(\rho^l)}{w(\rho^{l+1})}v(\rho x).
\end{equation*} 
By $(\ref{eqq3.1})$, we have
\begin{equation*}
\inf_{a\in\mathcal{A}}\{L_a^{i+1}R_{\rho}^i(x)+w^{-1}(\rho^{i+1})b_a\}=0,\quad\text{in $B_{\frac{5}{\rho}}(0)$},
\end{equation*}
where $L_a^{i+1}$ is the linear operator with kernel $K_a^{i+1}\in \mathcal{L}_2(\lambda,\Lambda,\sigma)$. Hence, there exists $\bar a\in\mathcal{A}$ such that 
\begin{equation}\label{eq3.3}
0\leq L_{\bar a}^{i+1}R_{\rho}^i(0)+w^{-1}(\rho^{i+1})b_{\bar a}<\rho^{\bar \alpha-\alpha},
\end{equation}
where $\alpha$ is given by $(H1)_{\bar\alpha}$. Let $\eta_0=1$ in $B_{\frac{1}{4}}(0)$ and $\eta_0\in C_c^{\infty}(B_{\frac{1}{2}}(0))$ be a fixed cut-off function. Let
\begin{equation*}
v_l=v_l\eta_0+v_l(1-\eta_0)=:v_l^{1}+v_{l}^{2},
\end{equation*}
and $p_l(x)$ be the Talyor polynomial of $v_l^{1}(x)$ at $x=0$ of degree $[\sigma+\bar\alpha]$. By Lemma \ref{le2.4}, there exists $P_l\in C_c^{\infty}(B_{\frac{1}{2}}(0))$ such that $P_l(x)=p_l(x)$ in $B_{\frac{1}{4}}(0)$ and $\|P_l\|_{C^{4}(B_{\frac{1}{2}}(0))}\leq C$ and 
\begin{equation}\label{eq3.4}
L_{\bar a}^{l}P_l(0)=L_{\bar a}^{l}v_l^{1}(0).
\end{equation}
Let 
\begin{equation*}
v_l=(v_l^{1}-P_l)+(v_l^{2}+P_l)=:V_l^{1}+V_l^{2}.
\end{equation*}
Thus, we have
\begin{equation*}
\|V_l^{1}\|_{L^{\infty}(\mathbb R^n)}+\|V_l^{2}\|_{L^{\infty}(\mathbb R^n)}\leq C,\quad V_l^{1}(0)=0,
\end{equation*}
\begin{equation}\label{eq3.5}
V_l^{1}\in C_c^{\sigma+\bar\alpha}(B_{\frac{1}{2}}(0)),\quad \|V_l^{1}\|_{C^{\sigma+\bar\alpha}(\mathbb R^n)}+\|V_l^{2}\|_{C^{\sigma+\bar\alpha}(B_1(0))}\leq C,
\end{equation}
\begin{equation*}
V_l^{1}=v_l-p_l\,\,\text{in $B_{\frac{1}{4}}(0)$}, V_l^{2}=p_l\,\,\text{in $B_{\frac{1}{4}}(0)$}, \|V_l^{1}(x)\|\leq C|x|^{\sigma+\bar\alpha}\,\,\text{in $\mathbb R^n$}.
\end{equation*}
Decompose $R(x)$ as
\begin{equation*}
R(x)=R^{(1)}(x)+R^{(2)}(x),
\end{equation*}
where 
\begin{equation*}
R^{(1)}(x)=\sum_{l=0}^i\rho^{-(i-l)\sigma}w^{-1}(\rho^i)w(\rho^l)V_l^{1}(\rho^{i-l}x),
\end{equation*}
and
\begin{equation*}
R^{(2)}(x)=\sum_{l=0}^i \rho^{-(i-l)\sigma}w^{-1}(\rho^i)w(\rho^l)V_l^{2}(\rho^{i-l}x).
\end{equation*}
Then, we have that, for each $a\in\mathcal{A}$，
\begin{eqnarray}\label{eq3.6}
L_a^{i+1}R_{\rho}^{(1)i}(x)&=&\sum_{l=0}^i\int_{\mathbb R^n}\rho^{-(i+1-l)\sigma}w^{-1}(\rho^{i+1})w(\rho^l)\delta V_l^{1}(\rho^{i+1-l}x,\rho^{i+1-l}y)K_a^{i+1}(y)dy\nonumber\\
&=&\sum_{l=0}^i\int_{\mathbb R^n}w^{-1}(\rho^{i+1})w(\rho^l)\delta V_l^{1}(\rho^{i+1-l}x,y)K_a^{l}(y)dy\nonumber\\
&=&\sum_{l=0}^i\frac{w(\rho^l)}{w(\rho^{i+1})}(L_a^{l}V_l^{1})(\rho^{i+1-l}x)
\end{eqnarray}
and
\begin{equation}\label{eqq3.7}
L_a^{i+1}R_\rho^{(2)i}(x)=\sum_{l=0}^i\frac{w(\rho^l)}{w(\rho^{i+1})}(L_a^{l}V_l^{2})(\rho^{i+1-l}x).
\end{equation}
It follows from $(\ref{eq3.3})$ and $(\ref{eq3.4})$ that
\begin{equation}\label{eq3.7}
L_{\bar a}^{i+1}R_\rho^{(1)i}(0)=0,
\end{equation}
\begin{equation}\label{eq3.8}
0\leq L_{\bar a}^{i+1}R_{\rho}^{(2)i}(0)+w^{-1}(\rho^{i+1})b_{\bar a}\leq \rho^{\bar\alpha-\alpha}.
\end{equation}
By $(H1)_{\bar\alpha}$, $(\ref{eq3.5})$, $(\ref{eq3.6})$, $(\ref{eq3.7})$ and Lemma \ref{le2.2}, we have, for any $x\in\mathbb R^n$
\begin{eqnarray}\label{eq3.9}
|L_{\bar a}^{i+1}R_{\rho}^{(1)i}(x)|&=&|L_{\bar a}^{i+1}R_{\rho}^{(1)i}(x)-L_{\bar a}^{i+1}R_{\rho}^{(1)i}(0)|\nonumber\\
&\leq&\sum_{l=0}^i\frac{w(\rho^l)}{w(\rho^{i+1})}|L_{a}^{l}V_l^{1}(\rho^{i+1-l}x)-L_a^{l}V_l^{1}(0)|\nonumber\\
&\leq&C|x|^{\bar\alpha}\sum_{l=0}^i\frac{w(\rho^l)}{w(\rho^{i+1})}\rho^{(i+1-l)\bar\alpha}\|V_l^{1}\|_{C^{\sigma+\bar\alpha}(\mathbb R^n)}\nonumber\\
&\leq&C|x|^{\bar\alpha}\sum_{l=0}^i \rho^{(i+1-l)(\bar\alpha-\alpha)}\nonumber\\
&\leq&C\rho^{\bar\alpha-\alpha}|x|^{\bar\alpha}.
\end{eqnarray}
Using $(H1)_{\bar\alpha}$, $(\ref{eq3.5})$, $(\ref{eqq3.7})$ and Lemma \ref{le2.3}, we have, for any $x\in B_5(0)$
\begin{eqnarray}\label{eq3.10}
|L_{\bar a}^{i+1}R_{\rho}^{(2)i}(x)-L_{\bar a}^{i+1}R_{\rho}^{(2)i}(0)|&\leq&\sum_{l=0}^i\frac{w(\rho^l)}{w(\rho^{i+1})}|L_a^{l}V_l^{2}(\rho^{i+1-l}x)-L_a^{l}V_l^{2}(0)|\nonumber\\
&\leq&C|x|^{\bar\alpha}\sum_{l=0}^i\frac{w(\rho^l)}{w(\rho^{i+1})}\rho^{(i+1-l)\bar\alpha}(\|V_l^{2}\|_{C^{\sigma+\bar\alpha}(B_1(0))}+\|V_l^{2}\|_{L^{\infty}(\mathbb R^n)})\nonumber\\
&\leq&C\rho^{\bar\alpha-\alpha}|x|^{\bar\alpha}.
\end{eqnarray}
Thus, by $(\ref{eq3.8})$ and $(\ref{eq3.10})$, we have
\begin{equation}\label{eq3.11}
|L_{\bar a}^{i+1}R_{\rho}^{(2)i}(x)+w^{-1}(\rho^{i+1})b_{\bar a}|\leq C\rho^{\bar\alpha-\alpha}(|x|^{\bar\alpha}+1),\quad \text{for any $x\in B_5(0)$}.
\end{equation}
We define
\begin{equation*}
\tilde v_{i+1}:=v_{i+1}+R_{\rho}^{(1)i}.
\end{equation*}
By ($\ref{eq3.5}$), we have
\begin{eqnarray}\label{eqq3.13}
|\tilde v_{i+1}(y)|&\leq&\|v_{i+1}\|_{L^{\infty}(\mathbb R^n)}+|R_\rho^{(1)i}(y)|\nonumber\\
&\leq&\frac{1}{M}+\sum_{l=0}^i\rho^{-(i+1-l)\sigma}w^{-1}(\rho^{i+1})w(\rho^l)V_l^{1}(\rho^{i+1-l}y)\nonumber\\
&\leq&\frac{1}{M}+\sum_{l=0}^i\rho^{-(i+1-l)(\sigma+\alpha)}|\rho^{i+1-l}y|^{\sigma+\bar\alpha}\nonumber\\
&\leq&\frac{1}{M}+\rho^{\bar\alpha-\alpha}|y|^{\sigma+\bar\alpha}.
\end{eqnarray}
By the definition of $\tilde v_{i+1}$, the following two equations are equivalent
\begin{equation}\label{eq3.12}
\inf_{a\in\mathcal{A}}\big\{L_a^{i+1}(v_{i+1}+R_{\rho}^i)(x)+w^{-1}(\rho^{i+1})b_a\big\}=0,\quad \text{in $B_5(0)$},
\end{equation}
and
\begin{equation}\label{eq3.13}
\inf_{a\in\mathcal{A}}\big\{L_a^{i+1}(\tilde v_{i+1}+R_{\rho}^{(2)i})(x)+w^{-1}(\rho^{i+1})b_a\big\}=0,\quad \text{in $B_5(0)$}.
\end{equation}
By $(\ref{eq3.9})$, $(\ref{eq3.11})$, $(\ref{eq3.12})$ and $(\ref{eq3.13})$, we have
\begin{equation}
L_{\bar a}^{i+1}v_{i+1}(x)\geq -C\rho^{\bar\alpha-\alpha},\quad\text{in $B_5(0)$},
\end{equation}
\begin{equation}
L_{\bar a}^{i+1}\tilde v_{i+1}(x)\geq -C\rho^{\bar\alpha-\alpha},\quad\text{in $B_5(0)$}.
\end{equation}

\begin{lemma}\label{le3.2}
Let $K$ be a symmetric kernel satisfying $0\leq K(y)\leq (2-\sigma)\Lambda|y|^{-n-\sigma}$. Then, for any smooth function $\tilde\eta$ such that
\begin{equation*}
0\leq\tilde\eta(x)\leq 1\,\,\text{in $\mathbb R^n$},\,\,\tilde\eta(x)=\tilde{\eta}(-x)\,\,\text{in $\mathbb R^n$},\,\,\tilde{\eta}(x)=0\,\,\text{in $\mathbb R^n\setminus B_{\frac{4}{5}}(0)$},\,\,\tilde\eta(x)=1\,\,\text{in $B_{\frac{3}{4}}(0)$},
\end{equation*}
we have
\begin{equation*}
M_{\mathcal{L}_2}^+\big(\tilde \eta(x)\int_{B_1(0)}\delta\tilde v_{i+1}(x,y)K(y)dy\big)\geq -C(\rho^{\bar\alpha-\alpha}+\frac{1}{M}),\quad\text{in $B_{\frac{3}{5}}(0)$}.
\end{equation*}
\end{lemma}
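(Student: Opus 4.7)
The plan is to test the maximal operator $M_{\mathcal{L}_2}^+$ against the single operator $L_{\bar a}^{i+1}$ already isolated in (\ref{eq3.3})---since $K_{\bar a}^{i+1}\in\mathcal{L}_2(\lambda,\Lambda,\sigma)$, $M_{\mathcal{L}_2}^+\geq L_{\bar a}^{i+1}$, so it suffices to show in the viscosity sense
\begin{equation*}
L_{\bar a}^{i+1}\bigl(\tilde\eta\,T\bigr)(x)\;\geq\;-C\!\left(\rho^{\bar\alpha-\alpha}+\tfrac{1}{M}\right),\qquad x\in B_{3/5}(0),
\end{equation*}
where $T(x):=\int_{B_1(0)}\delta\tilde v_{i+1}(x,y)K(y)\,dy$.

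First I would expand the product's second difference via
\begin{equation*}
\delta(\tilde\eta\,T)(x,z)=\tilde\eta(x)\,\delta T(x,z)+T(x)\,\delta\tilde\eta(x,z)+\sum_{\pm}[\tilde\eta(x\pm z)-\tilde\eta(x)][T(x\pm z)-T(x)],
\end{equation*}
splitting $L_{\bar a}^{i+1}(\tilde\eta\,T)=A_1+A_2+A_3$. Since $\tilde\eta\equiv 1$ on $B_{3/4}(0)\supset B_{3/5}(0)$, on $B_{3/5}(0)$ the principal term becomes $A_1(x)=L_{\bar a}^{i+1}T(x)$, and the cross-term factors in $A_3$ are supported in $\{|z|\geq 3/20\}$. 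The auxiliary pieces $A_2,A_3$ are then tail integrals controlled by the $L^\infty$ bound $\|T\|_{L^\infty}\leq C(\tfrac{1}{M}+\rho^{\bar\alpha-\alpha})$---which flows from (\ref{eqq3.13}) and the uniform $C^{\sigma+\bar\alpha}$ control of the $V_l^1$ from (\ref{eq3.5})---together with Lemma \ref{le2.1} applied to the smooth compactly supported $\tilde\eta$ and standard $\mathcal{L}_2$-tail estimates on $K_{\bar a}^{i+1}$.

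For the main piece $A_1$, I would exchange the outer $z$-integration against $K_{\bar a}^{i+1}$ with the inner $y$-integration against $K$ on $B_1(0)$ by Fubini and collect terms to obtain the operator commutation
\begin{equation*}
L_{\bar a}^{i+1}T(x)\;=\;\int_{B_1(0)}\delta\bigl(L_{\bar a}^{i+1}\tilde v_{i+1}\bigr)(x,y)\,K(y)\,dy,
\end{equation*}
with the key identity $\delta_z u_y(x)=\delta(L_{\bar a}^{i+1}\tilde v_{i+1})(\,\cdot\,,y)|_x$ after introducing $u_y(w):=\delta\tilde v_{i+1}(w,y)$. For $x\in B_{3/5}(0)$ and $y\in B_1(0)$, all evaluation points $x,\,x\pm y$ lie in $B_5(0)$, where the viscosity inequality $L_{\bar a}^{i+1}\tilde v_{i+1}\geq -C\rho^{\bar\alpha-\alpha}$ established just before the lemma holds; coupling it with the upper bound $L_{\bar a}^{i+1}\tilde v_{i+1}\leq C(\rho^{\bar\alpha-\alpha}+\tfrac{1}{M})$ (from the sub-equation side of (\ref{eq3.13}) at the near-minimizer $\bar a$, combined with (\ref{eq3.11})) gives two-sided control of $F:=L_{\bar a}^{i+1}\tilde v_{i+1}$ on $B_5(0)$. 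Using that $L_K^{B_1(0)}$ annihilates constants, $A_1$ reduces to the symmetric second-difference integral of the translated nonnegative function $F+C\rho^{\bar\alpha-\alpha}$, delivering the required lower bound.

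The main obstacle will be justifying the Fubini exchange and the integration of $\delta F(\cdot,y)$ against the singular kernel $K$ rigorously in the viscosity sense, since $\tilde v_{i+1}$ is only a viscosity sub-/supersolution and neither $T$ nor $F$ is a priori pointwise defined; note in particular that the integrability of $\delta F(x,y)K(y)$ at $y=0$ is not automatic from bare $L^\infty$ control of $F$. The standard remedy is to perform the entire computation on sup-/inf-convolutions $\tilde v_{i+1}^{\epsilon}$, which preserve both halves of the viscosity inequalities (\ref{eq3.12})--(\ref{eq3.13}) and are semiconcave (hence punctually $C^{1,1}$), allowing all the integral manipulations to be carried out classically, and then to pass to the limit $\epsilon\to 0^+$ via viscosity stability. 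Throughout, one must verify that the resulting constant $C$ depends only on $n$, $\sigma_0$, $\lambda$, $\Lambda$ and the fixed cutoff $\tilde\eta$---not on $M$, $\rho$, or the inductive level $i$---so that the lemma feeds cleanly back into the recursive induction.
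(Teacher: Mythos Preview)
Your approach has a genuine gap in the main term $A_1$. Two related problems:

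\textbf{(i) The upper bound on $F:=L_{\bar a}^{i+1}\tilde v_{i+1}$ is not available.} The index $\bar a$ was selected in (\ref{eq3.3}) as a near-minimizer \emph{at the single point $x=0$}. The infimum equation (\ref{eq3.13}) only gives, for each $x$, that \emph{some} $a=a(x)$ nearly attains zero; there is no reason the fixed $\bar a$ does so at all $x\in B_5(0)$. Hence the ``sub-equation side at the near-minimizer $\bar a$'' does not yield $L_{\bar a}^{i+1}\tilde v_{i+1}\leq C(\rho^{\bar\alpha-\alpha}+\tfrac{1}{M})$ on $B_5(0)$.

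\textbf{(ii) Even with two-sided $L^\infty$ control of $F$, the Fubini identity is ill-posed.} Since $\tilde v_{i+1}\in C^{\sigma+\bar\alpha}_{\rm loc}(B_4(0))$, Lemma~\ref{le2.3} gives only $F\in C^{\bar\alpha}$, and throughout this section $\bar\alpha<\sigma$. Thus $\delta F(x,y)=O(|y|^{\bar\alpha})$ at best, while $K(y)\sim |y|^{-n-\sigma}$, so $\int_{B_1(0)}\delta F(x,y)K(y)\,dy$ is \emph{divergent} at $y=0$. Your commutation $L_{\bar a}^{i+1}T=\int_{B_1}\delta F(\cdot,y)K(y)\,dy$ therefore equates a quantity with an integral that does not converge; Fubini is not justified, and sup-/inf-convolution of $\tilde v_{i+1}$ does not cure this, since it does not raise the regularity of $F$ beyond $C^{\bar\alpha}$. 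Likewise, a mere lower bound $F\geq -C$ gives no control whatsoever on $\int_{B_1}\delta F\,K$: nonnegativity of $F+C$ does not make $\delta(F+C)$ sign-definite.

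The paper's proof avoids both issues by a different mechanism. It truncates $K$ to $\phi_k=K\,\mathbbm{1}_{B_1\setminus B_{1/k}}$, so that $T_k\tilde v_{i+1}=2(\tilde v_{i+1}*\phi_k-\|\phi_k\|_{L^1}\tilde v_{i+1})$ is regular. Crucially, it does \emph{not} test against a single $L_{\bar a}^{i+1}$; instead it convolves the supersolution inequality $L_a^{i+1}\tilde v_{i+1}+L_a^{i+1}R_\rho^{(2)i}+w^{-1}(\rho^{i+1})b_a\ge 0$ (valid for every $a$) with $\phi_k$, subtracts the original equation scaled by $\|\phi_k\|_{L^1}$, and takes the supremum over $a$. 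The constants $b_a$ cancel, and one obtains directly $M_{\mathcal{L}_2}^+(T_k\tilde v_{i+1})\geq -C\rho^{2-\alpha}$ without ever needing a pointwise upper bound on any $L_a^{i+1}\tilde v_{i+1}$. The cutoff $\tilde\eta$ is then handled by a tail computation, and one passes $k\to\infty$ via viscosity stability. This use of the concave (infimum) structure---convolve the family of linear inequalities, subtract the nonlinear equation---is the idea missing from your outline.
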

\begin{proof}
Define 
\begin{equation*}
\phi_k(y)=\mathbbm{1}_{B_1(0)\setminus B_{\frac{1}{k}}(0)}(y)K(y)
\end{equation*}
and
\begin{equation*}
T_kv(x)=\int_{\mathbb R^n}\delta v(x,y)\phi_k(y)dy,\quad\text{for any function $v$}.
\end{equation*}
By $(\ref{eq3.13})$, we have 
\begin{equation*}
L_a^{i+1}\tilde v_{i+1}(x)+L_a^{i+1}R_{\rho}^{(2)i}(x)+w^{-1}(\rho^{i+1})b_a\geq 0,\quad\text{for any $x\in B_3(0)$ and $a\in\mathcal{A}$}.
\end{equation*}
It follows that, for any $x\in B_{\frac{3}{2}}(0)$
\begin{eqnarray*}
0&\leq& \big(L_a^{i+1}\tilde v_{i+1}+L_a^{i+1}R_{\rho}^{(2)i}+w^{-1}(\rho^{i+1})b_a\big)*\phi_k(x)\\
&\leq& L_a^{i+1}\big(\tilde v_{i+1}*\phi_k\big)(x)+L_a^{i+1}R_{\rho}^{(2)i}*\phi_k(x)+w^{-1}(\rho^{i+1})b_a\|\phi_k\|_{L^1(\mathbb R^n)}.
\end{eqnarray*}
It also follows from $(\ref{eq3.13})$ that
\begin{equation*}
\inf_{a\in\mathcal{A}}\Big\{\|\phi_k\|_{L^1(\mathbb R^n)}\big(L_a^{i+1}\tilde v_{i+1}(x)+L_a^{i+1}R_{\rho}^{(2)i}(x)+w^{-1}(\rho^{i+1})b_a\big)\Big\}=0,\quad\text{for any $x\in B_3(0)$}.
\end{equation*}
Thus, for any $x\in B_{\frac{3}{2}}(0)$
\begin{equation*}
\sup_{a\in\mathcal{A}}L_a^{i+1}\big(\tilde v_{i+1}*\phi_k-\|\phi_k\|_{L^1(\mathbb R^n)}\tilde v_{i+1}\big)(x)+\sup_{a\in\mathcal{A}}\big\{L_a^{i+1}R_{\rho}^{(2)i}*\phi_k(x)-\|\phi_k\|_{L^1(\mathbb R^n)}L_a^{i+1}R_{\rho}^{(2)i}(x)\big\}\geq 0.
\end{equation*}
By (\ref{eq3.5}), (\ref{eqq3.7}) and Lemma \ref{le2.1}, we have, for any $x\in B_{\frac{3}{2}}(0)$ and $a\in\mathcal{A}$
\begin{eqnarray*}
&&2|L_a^{i+1}R_{\rho}^{(2)i}*\phi_k(x)-\|\phi_k\|_{L^{1}(\mathbb R^n)}L_a^{i+1}R_{\rho}^{(2)i}(x)|\\
&\leq&\Big|\int_{B_{1}(0)\setminus B_{\frac{1}{k}}(0)|}\delta\big(L_a^{i+1}R_{\rho}^{(2)i}\big)(x,y)K(y)dy\Big|\\
&\leq&\sum_{l=0}^{i}\frac{w(\rho^l)}{w(\rho^{i+1})}\int_{B_{1}(0)\setminus B_{\frac{1}{k}}(0)}\Big|\delta L_a^{l}V_l^{2}(\rho^{i+1-l}x,\rho^{i+1-l}y)\Big|K(y)dy\\
&\leq&\sum_{l=0}^i\rho^{(i+1-l)\sigma}\frac{w(\rho^l)}{w(\rho^{i+1})}\int_{B_{\rho^{i+1-l}}(0)\setminus B_{\frac{\rho^{i+1-l}}{k}}(0)}\Big|\delta L_a^{l}V_l^{2}(\rho^{i+1-l}x,y)\Big|K^{-(i+1-l)}(y)dy\\
&\leq&\sum_{l=0}^i\rho^{(i+1-l)\sigma}\frac{w(\rho^l)}{w(\rho^{i+1})}\int_{B_{\rho^{i+1-l}}(0)}\|L_a^{l}V_l^{2}\|_{C^{2}(B_{\frac{1}{8}}(0))}|y|^2 K^{-(i+1-l)}(y)dy\\
&\leq&C\sum_{l=0}^{i}\rho^{(i+1-l)\sigma}\frac{w(\rho^l)}{w(\rho^{i+1})}\big(\|V_l^{2}\|_{C^4(B_{\frac{1}{4}}(0))}+\|V_l^{2}\|_{L^{\infty}(\mathbb R^n)}\big)\int_{B_{\rho^{i+1-l}}(0)}\frac{(2-\sigma)\Lambda|y|^2}{|y|^{n+\sigma}}dy\\
&\leq&C\sum_{l=0}^i\rho^{(i+1-l)(2-\alpha)}\leq C\rho^{2-\alpha}.
\end{eqnarray*}
Therefore,
\begin{equation*}
M_{\mathcal{L}_2}^+\big(\tilde v_{i+1}*\phi_k-\|\phi_k\|_{L^1(\mathbb R^n)}\tilde v_{i+1}\big)(x)\geq -C\rho^{2-\alpha},\quad\text{in $B_{\frac{3}{2}}(0)$.}
\end{equation*}
Thus, we have
\begin{equation}\label{eq3.18}
M_{\mathcal{L}_2}^+(T_k\tilde v_{i+1})(x)\geq -C\rho^{2-\alpha},\quad \text{in $B_{\frac{3}{2}}(0)$}.
\end{equation}
Let $\bar L$ be any operator with kernel $\bar K\in \mathcal{L}_2(\lambda,\Lambda,\sigma)$. For any $x\in B_{\frac{3}{5}}(0)$, we have 
\begin{eqnarray}\label{eq3.19}
\bar L(\tilde \eta T_k \tilde v_{i+1})(x)&=&\int_{\mathbb R^n}\delta(T_k\tilde v_{i+1})(x,y)\bar K(y)dy-\int_{\mathbb R^n}\delta\big((1-\tilde\eta)T_k \tilde v_{i+1}\big)(x,y)\bar K(y)dy\nonumber\\
&=&\bar L\Big(T_k \tilde v_{i+1}\Big)(x)-2\int_{\mathbb R^n}\big(1-\tilde \eta(x-y)\big)T_k \tilde v_{i+1}(x-y)\bar K(y)dy.
\end{eqnarray}
We now estimate the second term in $(\ref{eq3.19})$. For any $x\in B_{\frac{3}{5}}(0)$
\begin{eqnarray}\label{eq3.20}
&&\Big|\int_{\mathbb R^n}T_k v_{i+1}(x-y)\big(1-\tilde\eta (x-y)\big)\bar K(y)dy\Big|=\Big|\int_{\mathbb R^n}v_{i+1}(x-y)T_k\big((1-\tilde\eta(x-\cdot))\bar K(\cdot)\big)(y)dy\Big|\nonumber\\
&\leq&\|v_{i+1}\|_{L^{\infty}(\mathbb R^n)}\int_{\mathbb R^n}\int_{B_1(0)}\Big|\big(1-\tilde\eta(x-y-z)\big)\bar K(y+z)+\big(1-\tilde\eta(x-y+z)\big)\bar K(y-z)\nonumber\\
&&-2\big(1-\tilde\eta(x-y)\big)\bar K(y)\Big|K(z)dzdy\nonumber\\
&\leq&C\|v_{i+1}\|_{L^{\infty}(\mathbb R^n)}\leq\frac{C}{M},
\end{eqnarray}
and, by $(\ref{eq3.5})$ and Lemma \ref{le2.2},
\begin{eqnarray}\label{eq3.21}
&&|T_kR_{\rho}^{(1)i}(x)|=\Big|\int_{B_1(0)\setminus B_{\frac{1}{k}}(0)}\delta R_\rho^{(1)i}(x,y)K(y)dy\Big|\nonumber\\
&=&\Big|\sum_{l=0}^i\int_{B_1(0)\setminus B_{\frac{1}{k}}(0)}\rho^{-(i+1-l)\sigma}w^{-1}(\rho^{i+1})w(\rho^l)\delta V_l^{1}(\rho^{i+1-l}x,\rho^{i+1-l}y)K(y)dy\Big|\nonumber\\
&=&\Big|\sum_{l=0}^i\int_{B_{\rho^{i+1-l}}(0)\setminus B_{\frac{\rho^{i+1-l}}{k}}(0)}w^{-1}(\rho^{i+1})w(\rho^l)\delta V_l^{1}(\rho^{i+1-l}x,y)K^{-(i+1-l)}(y)dy\Big|\nonumber\\
&\leq&\sum_{l=0}^i w^{-1}(\rho^{i+1})w(\rho^l)\Big|\int_{B_{\rho^{i+1-l}}(0)\setminus B_{\frac{\rho^{i+1-l}}{k}}(0)}\big(\delta V_l^{1}(\rho^{i+1-l}x,y)-\delta V_l^{1}(0,y)\big)K^{-(i+1-l)}(y)dy\Big|\nonumber\\
&&+\sum_{l=0}^iw^{-1}(\rho^{i+1})w(\rho^l)\Big|\int_{B_{\rho^{i+1-l}}(0)\setminus B_{\frac{\rho^{i+1-l}}{k}}(0)}\delta V_l^{1}(0,y)K^{-(i+1-l)}(y)dy\Big|\nonumber\\
&\leq&C\sum_{l=0}^iw^{-1}(\rho^{i+1})w(\rho^l)|\rho^{i+1-l}x|^{\bar\alpha}\nonumber\\
&&+C\sum_{l=0}^i w^{-1}(\rho^{i+1})w(\rho^l)\int_{B_{\rho^{i+1-l}}(0)}\frac{(2-\sigma)\Lambda|y|^{\sigma+\bar\alpha}}{|y|^{n+\sigma}}dy\nonumber\\
&\leq&C\rho^{\bar\alpha-\alpha}(1+|x|^{\bar\alpha}).
\end{eqnarray}
Since $\sigma>\bar\alpha$ holds, we have, for any $x\in B_{\frac{3}{5}}(0)$
\begin{eqnarray}\label{eq3.22}
&&\Big|\int_{\mathbb R^n}\big(1-\tilde\eta(x-y)\big)T_k R_\rho^{(1)i}(x-y)\bar K(y)dy\Big|\nonumber\\
&=&\Big|\int_{\mathbb R^n}(1-\tilde\eta(y))T_kR_\rho^{(1)i}(y)\bar K(x-y)dy\Big|\nonumber\\
&=&\Big|\int_{\mathbb R^n\setminus B_{\frac{3}{4}}(0)}\big(1-\tilde \eta(y)\big)T_kR_\rho^{(1)i}(y)\bar K(x-y)dy\Big|\nonumber\\
&\leq&C\rho^{\bar\alpha-\alpha}\int_{|y|>\frac{1}{64}}\frac{(2-\sigma)\Lambda}{|y|^{n+\sigma-\bar\alpha}}dy\leq C\rho^{\bar\alpha-\alpha}.
\end{eqnarray}
Taking the supremum of all $\bar K\in \mathcal L_2(\lambda,\Lambda,\sigma)$ in $(\ref{eq3.19})$ and using $(\ref{eq3.18})$, $(\ref{eq3.20})$ and $(\ref{eq3.22})$, we have, for any $x\in B_{\frac{3}{5}}(0)$
\begin{eqnarray*}
M_{\mathcal{L}_2}^+(\tilde \eta T_k \tilde v_{i+1})(x)&\geq& -C\rho^{2-\alpha}-\frac{C}{M}-C\rho^{\bar\alpha-\alpha}\\
&\geq&-C(\rho^{\bar\alpha-\alpha}+\frac{1}{M}).
\end{eqnarray*}
By Theorem $\ref{th1.1}$, we know that $\tilde v_{i+1}\in C^{\sigma+\bar\alpha}(B_4(0))$. Thus 
\begin{equation*}
\int_{B_1(0)\setminus B_{\frac{1}{k}}(0)}\delta\tilde v_{i+1}(x,y)K(y)dy\to\int_{B_1(0)}\delta\tilde v_{i+1}(x,y)K(y)dy,\quad\text{in $B_{\frac{3}{2}}(0)$ uniformly},
\end{equation*}
as $k\to+\infty$. It is obvious that 
\begin{equation*}
\tilde\eta(x)\int_{B_1(0)\setminus B_{\frac{1}{k}}(0)}\delta\tilde v_{i+1}(x,y)K(y)dy\to\tilde\eta(x)\int_{B_1(0)}\delta\tilde v_{i+1}(x,y)K(y)dy,\quad\text{in $L^{1}(\mathbb R^n,\frac{1}{1+|x|^{n+\sigma}})$}.
\end{equation*}
Thus, the result follows by Lemma 5 in \cite{LL2}.
\end{proof}
\begin{lemma}\label{le3.3}
There is a constant $C$ depending on $n,\sigma_0,\lambda,\Lambda$ such that, for any operator $L$ with a symmetric kernel $K$ satisfying $0\leq K(y)\leq (2-\sigma)\Lambda|y|^{n+\sigma}$ we have
\begin{equation*}
|Lv_{i+1}(x)|\leq C(\rho^{\bar\alpha-\alpha}+\frac{1}{M}),\quad\text{in $B_1(0)$}.
\end{equation*}
\begin{proof}
The proof follows from that of Lemma 2.9 and Lemma 2.10 in \cite{TJ}.
\end{proof}
\end{lemma}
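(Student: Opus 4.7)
The plan is to transcribe the template of Lemmas 2.9 and 2.10 of \cite{TJ} into the present Dini-continuous setting. There are three ingredients: (i) a matching upper bound companion to Lemma \ref{le3.2}, (ii) an $L^{\infty}$ estimate for functions controlled from both sides by extremal operators, and (iii) a decomposition of $Lv_{i+1}$ in terms of $\tilde v_{i+1}$, $R_{\rho}^{(1)i}$, and the tail of $v_{i+1}$.

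First I would derive the companion bound
\begin{equation*}
M_{\mathcal{L}_2}^-\Big(\tilde\eta(x)\int_{B_1(0)}\delta\tilde v_{i+1}(x,y)K(y)\,dy\Big)\leq C\Big(\rho^{\bar\alpha-\alpha}+\frac{1}{M}\Big)\quad\text{in $B_{\frac{3}{5}}(0)$}
\end{equation*}
by replaying the proof of Lemma \ref{le3.2} with signs reversed: the function $-v_{i+1}$ satisfies a $\sup_{a\in\mathcal{A}}$ equation with the same structural features, and the convolution-with-$\phi_k$ trick, the pointwise bound $(\ref{eq3.21})$ on $R_{\rho}^{(1)i}$, and the $C^{\sigma+\bar\alpha}$ estimates on $V_l^{1}$ and $V_l^{2}$ are all insensitive to the direction of the inequality.

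With both extremal bounds in hand, set $H(x):=\int_{B_1(0)}\delta\tilde v_{i+1}(x,y)K(y)\,dy$. The function $\tilde\eta H$ has compact support in $B_{\frac{4}{5}}(0)$ and is simultaneously controlled above by $M^{-}_{\mathcal{L}_2}$ and below by $M^{+}_{\mathcal{L}_2}$ on $B_{\frac{3}{5}}(0)$ with the same right-hand side $C(\rho^{\bar\alpha-\alpha}+\frac{1}{M})$. The nonlocal Alexandrov--Bakelman--Pucci and Krylov--Safonov theory from \cite{LL1} then yields
\begin{equation*}
\|\tilde\eta H\|_{L^{\infty}(\mathbb R^n)}\leq C\Big(\rho^{\bar\alpha-\alpha}+\frac{1}{M}\Big),
\end{equation*}
and in particular $|H(x)|\leq C(\rho^{\bar\alpha-\alpha}+\frac{1}{M})$ on $B_{\frac{3}{4}}(0)$, where $\tilde\eta\equiv 1$.

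Finally, for $x\in B_{\frac{3}{4}}(0)$ and any admissible $L$, I decompose
\begin{equation*}
Lv_{i+1}(x)=H(x)-\int_{B_1(0)}\delta R_{\rho}^{(1)i}(x,y)K(y)\,dy+\int_{\mathbb R^n\setminus B_1(0)}\delta v_{i+1}(x,y)K(y)\,dy.
\end{equation*}
The first piece is bounded by the previous step; the second is estimated exactly as in $(\ref{eq3.21})$ via $(H1)_{\bar\alpha}$ together with the $C^{\sigma+\bar\alpha}$ control on $V_l^{1}$; the third is bounded by $C\|v_{i+1}\|_{L^{\infty}(\mathbb R^n)}\leq C/M$ using only the upper kernel bound on $K$ and $\sigma\geq\sigma_0>0$. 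The passage from $B_{\frac{3}{4}}(0)$ to $B_1(0)$ is obtained by repeating the construction centered at arbitrary points of $B_1(0)$, which is admissible because all estimates in Section 3 are stable under translation inside the relevant balls. The main technical delicacy lies in step (i): extracting the symmetric $M^{-}$ bound from the one-sided $\inf$-equation; once that sign-reversed analogue of Lemma \ref{le3.2} is in place, the rest of the argument is a direct transcription of the translation-invariant estimates of \cite{TJ}.
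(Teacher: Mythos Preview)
Your outline follows the right template, but step (i) contains a genuine gap. Replaying the proof of Lemma \ref{le3.2} for $-v_{i+1}$ does \emph{not} yield the opposite extremal inequality. If $\inf_{a}\{L_a\tilde v_{i+1}+g_a\}=0$, then $-\tilde v_{i+1}$ satisfies $\sup_{a}\{L_a(-\tilde v_{i+1})-g_a\}=0$; running the convolution-with-$\phi_k$ argument on this convex equation gives, for each $x$, some $a_x$ with $L_{a_x}\big(T_k(-\tilde v_{i+1})\big)(x)\leq C(\cdot)$, hence $M^{-}_{\mathcal{L}_2}\big(T_k(-\tilde v_{i+1})\big)\leq C(\cdot)$. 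Since $M^{-}(-w)=-M^{+}(w)$, this is exactly $M^{+}_{\mathcal{L}_2}(T_k\tilde v_{i+1})\geq -C(\cdot)$, i.e.\ the \emph{same} one-sided bound you already had from Lemma \ref{le3.2}. Concavity of the equation is what makes the convolution trick work, and it only ever produces the $M^{+}\geq -C$ direction; no sign reversal recovers $M^{-}\leq C$.

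Because of this, step (ii) as written cannot proceed: you do not have two-sided extremal control on $\tilde\eta H$, and even if you did, the extremal inequalities from Lemma \ref{le3.2} hold only on $B_{3/5}(0)$ while $\tilde\eta H$ is supported in $B_{4/5}(0)$, so a direct ABP on the support is not available without an additional a priori bound in the annulus. The argument in \cite{TJ} (and in \cite{LL3}, whose scheme \cite{TJ} adapts) does not attempt a two-sided extremal bound. Instead it exploits the one-sided inequality $M^{+}_{\mathcal{L}_2}(\tilde\eta T_K\tilde v_{i+1})\geq -C(\cdot)$ \emph{uniformly over all admissible kernels $K$}: one applies it both to $K$ and to the complementary kernel $K':=\Lambda(2-\sigma)|y|^{-n-\sigma}-K$, uses the weak Harnack/$L^{\varepsilon}$ estimate from \cite{LL1} on each of the resulting nonnegative pieces, and combines this with the pointwise control of $L_{\bar a}^{i+1}\tilde v_{i+1}$ coming directly from the equation (cf.\ $(\ref{eq3.11})$) to close the two-sided $L^{\infty}$ bound. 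Your step (iii) is fine once the correct bound on $H$ is in hand.
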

\begin{lemma}\label{le3.4}
There is a constant $C$ depending on $n,\sigma_0,\lambda,\Lambda$ such that
\begin{equation}\label{eq3.23}
\max\{|M_{\mathcal{L}_0}^+v_{i+1}|,|M_{\mathcal{L}_0}^-v_{i+1}|\}\leq C(\rho^{\bar\alpha-\alpha}+\frac{1}{M}),\quad \text{in $B_1(0)$}.
\end{equation}
Moreover, we have 
\begin{equation}\label{eq3.24}
\|\nabla v_{i+1}\|_{L^{\infty}(B_{\frac{1}{2}}(0))}\leq C(\rho^{\bar\alpha-\alpha}+\frac{1}{M}),
\end{equation}
and
\begin{equation}\label{eq3.25}
\|\nabla \tilde v_{i+1}\|_{L^{\infty}(B_{\frac{1}{2}}(0))}\leq C(\rho^{\bar\alpha-\alpha}+\frac{1}{M}).
\end{equation}
\end{lemma}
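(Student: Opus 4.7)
The first estimate $(\ref{eq3.23})$ should fall out of Lemma~\ref{le3.3} at once. Every operator $L\in\mathcal{L}_0(\lambda,\Lambda,\sigma)$ has a symmetric kernel satisfying the upper bound $0\leq K(y)\leq (2-\sigma)\Lambda|y|^{-n-\sigma}$ required by Lemma~\ref{le3.3}, so
\begin{equation*}
|Lv_{i+1}(x)|\leq C\Big(\rho^{\bar\alpha-\alpha}+\frac{1}{M}\Big),\qquad x\in B_1(0),\ L\in\mathcal{L}_0.
\end{equation*}
Taking supremum and infimum over $L\in\mathcal{L}_0$ then yields $(\ref{eq3.23})$.

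For the gradient bound $(\ref{eq3.24})$ the plan is to couple $(\ref{eq3.23})$ with the Caffarelli--Silvestre interior $C^{1+\alpha'}$ regularity for equations elliptic with respect to $\mathcal{L}_1$. Setting $\epsilon:=\rho^{\bar\alpha-\alpha}+\frac{1}{M}$ and using $\mathcal{L}_1\subset\mathcal{L}_0$, estimate $(\ref{eq3.23})$ gives $M_{\mathcal{L}_1}^+v_{i+1}\geq-C\epsilon$ and $M_{\mathcal{L}_1}^-v_{i+1}\leq C\epsilon$ in $B_1(0)$ in the viscosity sense. Since we are treating the case $\sigma_0>1$, the $C^{1+\alpha'}$ estimate for some $\alpha'>0$ depending on $n,\sigma_0,\lambda,\Lambda$ applies and produces
\begin{equation*}
\|v_{i+1}\|_{C^{1+\alpha'}(B_{1/2}(0))}\leq C(\|v_{i+1}\|_{L^\infty(\mathbb R^n)}+\epsilon)\leq C\epsilon,
\end{equation*}
from which $(\ref{eq3.24})$ is immediate.

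For $(\ref{eq3.25})$ I would apply the very same strategy to $\tilde v_{i+1}$. By $(\ref{eq3.13})$, $\tilde v_{i+1}$ solves a nonlocal equation whose external datum $L_a^{i+1}R_\rho^{(2)i}(x)+w^{-1}(\rho^{i+1})b_a$ is already controlled by $C\epsilon$ on $B_5(0)$ thanks to $(\ref{eq3.11})$. Repeating the reasoning of Lemma~\ref{le3.2} and Lemma~\ref{le3.3} with $\tilde v_{i+1}$ in place of $v_{i+1}$ -- that is, convolving the equation against a truncated symmetric kernel $\phi_k$ and commuting with $L_a^{i+1}$ -- produces the analog $|L\tilde v_{i+1}(x)|\leq C\epsilon$ on $B_1(0)$ for every symmetric $L$ with kernel bounded by $(2-\sigma)\Lambda|y|^{-n-\sigma}$. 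Combining this with the pointwise bound $(\ref{eqq3.13})$, which gives $|\tilde v_{i+1}|\leq C\epsilon$ on $B_5(0)$, and an outer cut-off analogous to the $\tilde\eta$ of Lemma~\ref{le3.2} to kill the polynomial tails $\rho^{\bar\alpha-\alpha}|y|^{\sigma+\bar\alpha}$, the same Caffarelli--Silvestre $C^{1+\alpha'}$ estimate yields $(\ref{eq3.25})$.

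The principal obstacle I anticipate is the last step: $\tilde v_{i+1}$ does not have a small $L^\infty(\mathbb R^n)$ norm, only a small norm on $B_5(0)$, with polynomial growth $\rho^{\bar\alpha-\alpha}|y|^{\sigma+\bar\alpha}$ at infinity. One must therefore verify that a truncation of $\tilde v_{i+1}$ outside $B_4(0)$ produces errors in the nonlocal equation that are themselves of order $\epsilon$, rather than order one, so that the Caffarelli--Silvestre estimate can indeed be applied with right-hand side $C\epsilon$. Once this is checked, the proof closes and both gradient bounds follow by the same mechanism.
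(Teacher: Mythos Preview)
Your argument for $(\ref{eq3.23})$ matches the paper exactly.

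For $(\ref{eq3.24})$ there is a gap as written: the one-sided inequalities $M_{\mathcal{L}_1}^+v_{i+1}\geq-C\epsilon$ and $M_{\mathcal{L}_1}^-v_{i+1}\leq C\epsilon$ only place $v_{i+1}$ in the Pucci $S$-class, and that class yields merely $C^{\alpha}$ regularity (Theorem~12.1 of \cite{LL1}), not $C^{1+\alpha'}$. What is needed is a genuine translation-invariant \emph{equation} with small right-hand side. The paper obtains one by freezing the original concave operator at the origin: setting $I^0v:=\inf_{a}\{L_a^{i+1}v+L_a^{i+1}R_\rho^i(0)+w^{-1}(\rho^{i+1})b_a\}$, estimates $(\ref{eq3.9})$ and $(\ref{eq3.10})$ give $|I^0v_{i+1}|\leq C\rho^{\bar\alpha-\alpha}$ on $B_1(0)$, and since $I^0$ has $C^{\sigma+\bar\alpha}$ estimates by Theorem~\ref{th1.1}, Theorem~5.2 of \cite{LL2} yields $\|v_{i+1}\|_{C^{1,\alpha_1}(B_{1/2})}\leq C(\tfrac{1}{M}+\rho^{\bar\alpha-\alpha})$. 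Your idea can be rescued in a similar spirit: $(\ref{eq3.23})$ is in fact two-sided, so fixing any single $L\in\mathcal{L}_2$ gives $Lv_{i+1}=g$ with $\|g\|_{L^\infty}\leq C\epsilon$, and the $C^{1,\alpha_1}$ estimate for that translation-invariant equation applies; but you must invoke an equation, not just membership in the Pucci class.

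For $(\ref{eq3.25})$ you are working much harder than necessary, and the truncation difficulty you flag as the ``principal obstacle'' is avoided entirely in the paper. The paper simply uses the decomposition $\tilde v_{i+1}=v_{i+1}+R_\rho^{(1)i}$ and bounds $\nabla R_\rho^{(1)i}$ by hand: by $(\ref{eq3.5})$ each $V_l^{1}$ vanishes to order $[\sigma+\bar\alpha]$ at the origin, so $|\nabla V_l^{1}(x)|\leq C|x|^{\sigma+\bar\alpha-1}$ on $B_{1/2}(0)$, and hence for $x\in B_{1/2}(0)$
\[
|\nabla R_\rho^{(1)i}(x)|\leq C\sum_{l=0}^i\rho^{-(i+1-l)(\sigma-1)}\frac{w(\rho^l)}{w(\rho^{i+1})}\,\rho^{(i+1-l)(\sigma+\bar\alpha-1)}\leq C\sum_{l=0}^i\rho^{(i+1-l)(\bar\alpha-\alpha)}\leq C\rho^{\bar\alpha-\alpha}.
\]
Adding this to $(\ref{eq3.24})$ gives $(\ref{eq3.25})$ immediately, with no need to repeat Lemmas~\ref{le3.2}--\ref{le3.3} for $\tilde v_{i+1}$ or to control its polynomial tails.
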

\begin{proof}
($\ref{eq3.23}$) follows directly from Lemma \ref{le3.3}. To prove ($\ref{eq3.24}$), we first notice that $v_{i+1}$ satisfies
\begin{equation*}
\inf_{a\in\mathcal{A}}\{L_a^{i+1}(v_{i+1}+R_\rho^i)(x)+w^{-1}(\rho^{i+1})b_a\}=0,\quad \text{in $B_5(0)$}.
\end{equation*}
We define 
\begin{equation*}
I^0\cdot(x)=\inf_{a\in\mathcal{A}}\{L_a^{i+1}\cdot (x)+L_a^{i+1}R_\rho^i(0)+w^{-1}(\rho^{i+1})b_a\}.
\end{equation*}
By Theorem \ref{th1.1}, we know that $I^0$ has $C^{\sigma+\bar\alpha}$ estimates. By ($\ref{eq3.9}$) and ($\ref{eq3.10}$), we have that $v_{i+1}$ is a bounded function solves
\begin{equation*}
I^{0}v_{i+1}(x)\leq -\inf_{a\in\mathcal{A}}\{L_a^{i+1}R_\rho^i(x)-L_a^{i+1}R_\rho^i(0)\}\leq C\rho^{\bar\alpha-\alpha},\quad\text{in $B_1(0)$},
\end{equation*}
and
\begin{equation*}
I^0v_{i+1}(x)\geq-\sup_{a\in\mathcal{A}}\{L_a^{i+1}R_\rho^i(x)-L_a^{i+1}R_\rho^i(0)\}\geq -C\rho^{\bar\alpha-\alpha},\quad\text{in $B_1(0)$}.
\end{equation*}
It follows from Theorem 5.2 in \cite{LL2} that $v_{i+1}\in C^{1,\alpha_1}(B_{\frac{1}{2}}(0))$ for any $\alpha_1<\sigma_0-1$ and 
\begin{equation*}
\|v_{i+1}\|_{C^{1,\alpha_1}(B_{\frac{1}{2}}(0))}\leq C(\frac{1}{M}+\rho^{\bar\alpha-\alpha}).
\end{equation*}
By $(\ref{eq3.5})$, we have $|\nabla V_l^{1}(x)|\leq C|x|^{\sigma+\bar\alpha-1}$ in $B_{\frac{1}{2}}(0)$. Thus, for any $x\in B_{\frac{1}{2}}(0)$ we have
\begin{eqnarray*}
|\nabla R_\rho^{(1)i}(x)|&=&\Big|\nabla\sum_{l=0}^i\rho^{-(i+1-l)\sigma}w^{-1}(\rho^{i+1})w(\rho^l)V_l^{1}(\rho^{i+1-l}x)\Big|\\
&\leq&C\sum_{l=0}^i\rho^{-(i+1-l)(\sigma+\alpha-1)}\rho^{(i+1-l)(\sigma+\bar\alpha-1)}\\
&\leq&C\sum_{l=0}^i\rho^{(i+1-l)(\bar\alpha-\alpha)}\leq C\rho^{\bar\alpha-\alpha}.
\end{eqnarray*}
Thus, $(\ref{eq3.25})$ follows.
\end{proof}
\begin{lemma}\label{th3.2}
There is a constant $C$ depending on $n,\sigma_0,\lambda,\Lambda$ such that
\begin{equation*}
\int_{\mathbb R^n}|\delta v_{i+1}(x,y)|\frac{2-\sigma}{|y|^{n+\sigma}}dy\leq C(\rho^{\bar\alpha-\alpha}+\frac{1}{M})\quad \text{in $B_1(0)$}.
\end{equation*}
\end{lemma}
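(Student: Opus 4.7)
\noindent\emph{Proof plan.} The plan is to reduce the statement to the Pucci-operator bound $(\ref{eq3.23})$ of Lemma \ref{le3.4} via the pointwise algebraic identity
\[
M^+_{\mathcal{L}_0} w(x) - M^-_{\mathcal{L}_0} w(x) \;=\; (\Lambda-\lambda) \int_{\mathbb R^n} \frac{(2-\sigma)\,|\delta w(x,y)|}{|y|^{n+\sigma}}\,dy,
\]
valid (in the generic case $\lambda<\Lambda$) for any function $w$ sufficiently regular at $x$. Once this identity is in hand, specializing to $w=v_{i+1}$ and dividing by $\Lambda-\lambda$ immediately yields the desired bound of the form $C(\rho^{\bar\alpha-\alpha}+\tfrac{1}{M})$.

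To establish the identity at a fixed $x$, I would observe that the symmetry $\delta w(x,y)=\delta w(x,-y)$ makes the indicators $\mathbbm{1}_{\{\delta w(x,\cdot)\geq 0\}}$ and $\mathbbm{1}_{\{\delta w(x,\cdot)<0\}}$ even in $y$. Consequently the kernel
\[
K^+(y) \;:=\; \frac{2-\sigma}{|y|^{n+\sigma}}\bigl(\Lambda\,\mathbbm{1}_{\{\delta w(x,\cdot)\geq 0\}}(y)+\lambda\,\mathbbm{1}_{\{\delta w(x,\cdot)<0\}}(y)\bigr)
\]
is itself symmetric in $y$ and belongs to $\mathcal{L}_0(\lambda,\Lambda,\sigma)$, and its associated linear operator saturates the supremum defining $M^+_{\mathcal{L}_0}w(x)$ pointwise. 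The dual choice $K^-$, obtained by swapping $\Lambda$ and $\lambda$, saturates $M^-_{\mathcal{L}_0}w(x)$. A short computation shows that $\delta w\,(K^+-K^-)=(2-\sigma)(\Lambda-\lambda)\,|\delta w|/|y|^{n+\sigma}$ on both $\{\delta w\geq 0\}$ and $\{\delta w<0\}$, which yields the identity upon integration.

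The main technical point is to ensure the right-hand side is finite in the classical pointwise sense, not merely in the viscosity sense in which the Pucci-operator bound $(\ref{eq3.23})$ a priori holds. This should be handled by combining the local $C^{1,\alpha_1}$ bound of Lemma \ref{le3.4} with the perturbative estimates $(\ref{eq3.9})$ and $(\ref{eq3.10})$, which show that $v_{i+1}$ solves, on a slightly smaller ball, an equation of the form in Theorem \ref{th1.1} with controlled right-hand side, upgrading $v_{i+1}$ to $C^{\sigma+\bar\alpha}$ locally. This in turn makes the integrand near the origin controlled by $C|y|^{\bar\alpha-n}$, which is integrable, and the tail is handled by $\|v_{i+1}\|_{L^\infty}\leq \tfrac{1}{M}$. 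I expect the identity itself to be the conceptual heart of the argument, while the integrability bookkeeping is routine.
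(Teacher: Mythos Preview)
Your argument is correct and is essentially the route the paper takes by invoking Lemmas~\ref{le3.3}--\ref{le3.4} together with the proof of Theorem~7.4 in~\cite{LL3}. The only nuance is that the referenced argument tests against indicator kernels $K_A(y)=(2-\sigma)\Lambda|y|^{-n-\sigma}\mathbbm{1}_A(y)$ (which is why Lemma~\ref{le3.3}, allowing kernels without the lower ellipticity bound, is cited) to control $\int(\delta v_{i+1})^{\pm}$ separately, whereas your Pucci-difference identity packages the same computation more compactly at the price of the harmless restriction $\lambda<\Lambda$; the $C^{\sigma+\bar\alpha}$ regularity you need for the pointwise interpretation is already recorded just after~$(\ref{eq3.22})$ for $\tilde v_{i+1}$ and transfers to $v_{i+1}=\tilde v_{i+1}-R_\rho^{(1)i}$ since $R_\rho^{(1)i}$ is a finite sum of rescaled $C^{\sigma+\bar\alpha}$ functions.
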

\begin{proof}
By Lemma \ref{le3.3} and \ref{le3.4}, it follows from the proof of Theorem 7.4 in \cite{LL3}.
\end{proof}

Let $\tilde\eta$ be the smooth function in Lemma \ref{le3.2}. For any symmetric measurable set $A$, we define
\begin{equation*}
w_A(x):=\tilde\eta(x)\int_{B_1(0)}\big(\delta\tilde v_{i+1}(x,y)-\delta\tilde v_{i+1}(0,y)\big)K_A(y)dy,
\end{equation*}
where
\begin{equation*}
K_A(y)=\frac{2-\sigma}{|y|^{n+\sigma}}\mathbbm{1}_A(y).
\end{equation*}
By Lemma \ref{le2.2}, we have for any $x\in B_1(0)$
\begin{eqnarray}\label{e3.27}
&&\Big|\int_{B_1(0)}\big(\delta R_\rho^{(1)i}(x,y)-\delta R_\rho^{(1)i}(0,y)\big)K_A(y)dy\Big|\nonumber\\
&=&\Big|\sum_{l=0}^i\rho^{-(i+1-l)\sigma}w^{-1}(\rho^{i+1})w(\rho^l)\int_{B_1(0)}\big(\delta V_l^{1}(\rho^{i+1-l}x,\rho^{i+1-l}y)-\delta V_l^{1}(0,\rho^{i+1-l}y)\big)K_A(y)dy\Big|\nonumber\\
&=&\Big|\sum_{l=0}^iw^{-1}(\rho^{i+1})w(\rho^l)\int_{B_1(0)}\big(\delta V_l^{1}(\rho^{i+1-l}x,y)-\delta V_l^{1}(0,y)\big)K_A^{l-1-i}(y)dy\Big|\nonumber\\
&\leq&\sum_{l=0}^i\rho^{-(i+1-l)\alpha}\|V_l^{1}\|_{C^{\sigma+\bar\alpha}(\mathbb R^n)}\rho^{(i+1-l)\bar\alpha}|x|^{\bar\alpha}\leq C\rho^{\bar\alpha-\alpha}|x|^{\bar\alpha}.
\end{eqnarray}
Using Lemma \ref{th3.2} and $(\ref{e3.27})$, we get
\begin{equation*}
|w_A|\leq C(\rho^{\bar\alpha-\alpha}+\frac{1}{M}),\quad\text{in $\mathbb R^n$}.
\end{equation*}
It follows from Lemma \ref{le3.3} and ($\ref{eq3.21}$) that
\begin{equation*}
\Big|\int_{B_1(0)}\delta \tilde v_{i+1}(0,y)K_A(y)dy\Big|\leq C(\rho^{\bar\alpha-\alpha}+\frac{1}{M}).
\end{equation*}
By Lemma \ref{le3.2}, we have
\begin{equation*}
M_{\mathcal{L}_2}^+w_A\geq -C(\rho^{\bar\alpha-\alpha}+\frac{1}{M}),\quad\text{in $B_{\frac{3}{5}}(0)$ uniformly in $A$}.
\end{equation*}
We define
\begin{equation*}
P(x):=\sup_A w_A(x)=\tilde\eta (x)\int_{B_1(0)}\big(\delta\tilde v_{i+1}(x,y)-\delta \tilde v_{i+1}(0,y)\big)^+\frac{2-\sigma}{|y|^{n+\sigma}}dy,
\end{equation*}
and
\begin{equation*}
N(x):=\sup_{A} -w_A(x)=\tilde \eta(x)\int_{B_1(0)}\big(\delta \tilde v_{i+1}(x,y)-\delta \tilde v_{i+1}(0,y)\big)^-\frac{2-\sigma}{|y|^{n+\sigma}}dy.
\end{equation*}
\begin{lemma}\label{le3.5}
For any $x\in B_{\frac{1}{4}}(0)$, we have
\begin{equation}\label{eqq3.28}
\frac{\lambda}{\Lambda}N(x)-C(\rho^{\bar\alpha-\alpha}+\frac{1}{M})|x|\leq P(x)\leq \frac{\lambda}{\Lambda}N(x)+C(\rho^{\bar\alpha-\alpha}+\frac{1}{M})|x|.
\end{equation}
\end{lemma}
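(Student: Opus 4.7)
My plan is to adapt the concavity-based Evans--Krylov strategy to this nonlocal setting, following the template of Jin--Xiong already used in the derivation of Lemmas \ref{le3.2}--\ref{th3.2}. Write $\omega(x,y):=\delta\tilde v_{i+1}(x,y)-\delta\tilde v_{i+1}(0,y)$ and $K^*(y):=(2-\sigma)|y|^{-n-\sigma}$. Since $\tilde\eta(x)=1$ for $x\in B_{1/4}(0)$, the sign-selecting (Pucci-extremal) representation
\[
\Lambda P(x)-\lambda N(x)=\int_{B_1(0)}\omega(x,y)\bigl[\Lambda\mathbbm 1_{\{\omega(x,\cdot)>0\}}(y)+\lambda\mathbbm 1_{\{\omega(x,\cdot)\le 0\}}(y)\bigr]K^*(y)\,dy
\]
reduces the two-sided bound (\ref{eqq3.28}) to showing $|\Lambda P(x)-\lambda N(x)|\le C\Lambda(\rho^{\bar\alpha-\alpha}+\tfrac1M)|x|$.

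For the upper bound, I would pick a point-wise near-minimizer $a_x\in\mathcal A$ of (\ref{eq3.13}) at $x$ and extend the symmetric sign-selecting kernel above to $\mathbb R^n\setminus B_1(0)$ by the value $K_{a_x}^{i+1}$, producing an element of $\mathcal L_0$. Subtracting (\ref{eq3.13}) at $x$ and $0$ along $a=a_x$ yields
\[
L_{a_x}^{i+1}\tilde v_{i+1}(x)-L_{a_x}^{i+1}\tilde v_{i+1}(0)\le -\bigl(L_{a_x}^{i+1}R_\rho^{(2)i}(x)-L_{a_x}^{i+1}R_\rho^{(2)i}(0)\bigr).
\]
The key refinement over (\ref{eq3.10}) is that, on the small ball $B_{1/4}(0)$, each $L_a^l V_l^2$ is actually $C^1$ near the origin: the function $v_l^2=v_l(1-\eta_0)$ vanishes on $B_{1/4}(0)$ while $P_l\in C_c^\infty$, and the kernel $K_a^l$ is smooth away from $0$. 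Summing the resulting Lipschitz bounds with the weights $w(\rho^l)/w(\rho^{i+1})$ and invoking $(H1)_{\bar\alpha}$ exactly as in the derivation of (\ref{eq3.10}) upgrades the Hölder $|x|^{\bar\alpha}$ bound to a genuine Lipschitz bound of order $C(\rho^{\bar\alpha-\alpha}+\tfrac1M)|x|$. The tail $\int_{\mathbb R^n\setminus B_1(0)}\omega(x,y)K_{a_x}^{i+1}(y)\,dy$ is treated by the same mechanism: the kernel smoothness (\ref{smo}) for $|y|\ge 1$ together with the Lipschitz estimate (\ref{eq3.25}) and the growth bound (\ref{eqq3.13}) makes this tail Lipschitz-in-$x$ at $0$ with the same bound. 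Combining these and isolating the $B_1(0)$-contribution through the extremal identity gives the right-hand inequality of (\ref{eqq3.28}); the matching left-hand inequality follows by the symmetric argument, taking $\bar a$ in place of $a_x$ and the opposite sign-selecting kernel, with the near-minimality error $\rho^{\bar\alpha-\alpha}$ from (\ref{eq3.3}) absorbed into the constant.

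The main obstacle is precisely the refinement of the Hölder bound (\ref{eq3.10}) to a Lipschitz bound on $B_{1/4}(0)$. This must be done carefully, exploiting that $V_l^2=v_l^2+P_l$ is locally smooth near $0$ (so that $L_a^l V_l^2$ is actually $C^1$ there with a uniform bound), and that the scaling $\rho^{i+1-l}x$ together with $(H1)_{\bar\alpha}$ allows the weighted series of Lipschitz contributions to be summed uniformly in $i$. A secondary technical point is the matching of the sign-selecting kernel on $B_1(0)$ with the equation-kernel $K_{a_x}^{i+1}$ outside $B_1(0)$, which ensures that the $\mathbb R^n$-integral telescopes cleanly into the quantity $\Lambda P(x)-\lambda N(x)$ plus a manageable tail.
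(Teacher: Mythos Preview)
Your overall strategy coincides with the paper's: subtract the equation (\ref{eq3.13}) at $x$ and at $0$, upgrade the H\"older estimate (\ref{eq3.10}) on $L_a^{i+1}R_\rho^{(2)i}$ to a Lipschitz bound near the origin by exploiting that $V_l^2=P_l$ is $C^4$ on $B_{1/4}(0)$ (so Lemma~\ref{le2.1} applies), and control the tail over $\mathbb R^n\setminus B_1(0)$ using the $\mathcal L_2$ gradient bound on the kernel together with (\ref{eqq3.13}) and (\ref{eq3.25}). That part is exactly right and is the heart of the argument.

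However, your algebraic reduction is off. The quantity $\Lambda P(x)-\lambda N(x)=\sup_{\lambda K^*\le K\le\Lambda K^*}\int_{B_1(0)}\omega(x,y)K(y)\,dy$ is a \emph{supremum}; evaluating the equation at a single near-minimizer $a_x$ gives $\int_{B_1}\omega\,K_{a_x}^{i+1}\le C(\rho^{\bar\alpha-\alpha}+\tfrac1M)|x|$, which bounds the \emph{infimum} $\lambda P-\Lambda N$ from above, not the supremum. Thus the method yields $\Lambda P-\lambda N\ge -C(\cdots)|x|$ (from the minimizer at $0$) and $\lambda P-\Lambda N\le C(\cdots)|x|$ (from the minimizer at $x$), i.e.\ $(\lambda/\Lambda)N-C|x|\le P\le(\Lambda/\lambda)N+C|x|$. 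This is precisely what the paper's proof via $M_{\mathcal L_2}^{\pm}$ produces; the right-hand inequality of (\ref{eqq3.28}) as printed, with coefficient $\lambda/\Lambda$, appears to be a typo for $\Lambda/\lambda$, and in any case only the one-sided comparability is used downstream.

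Relatedly, your kernel-matching device---gluing the sign-selecting kernel on $B_1(0)$ to $K_{a_x}^{i+1}$ outside---does not telescope as you hope: the patched kernel is not of the form $K_a^{i+1}$ for any $a\in\mathcal A$, so (\ref{eq3.13}) gives no information about $\int_{\mathbb R^n}\omega\,\tilde K$. The paper avoids this by working with genuine $\mathcal L_2$ kernels throughout (so the tail estimate is uniform), bounding $M_{\mathcal L_2}^{\pm}(\tilde v_{i+1,x}-\tilde v_{i+1})(0)$, and only at the end using $\mathcal L_2\subset\mathcal L_0$ to compare with $\Lambda P-\lambda N$ and $\lambda P-\Lambda N$.
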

\begin{proof}
For any $x\in B_{\frac{1}{2}}(0)$, we define $\tilde v_{i+1,x}(z):=\tilde v(x+z)$. By ($\ref{eq3.13}$), we have
\begin{equation*}
M_{\mathcal{L}_2}^+(\tilde v_{i+1,x}-\tilde v_{i+1})(0)\geq -\sup_{a\in\mathcal{A}}(L_a^{i+1}R_\rho^{(2)i}(x)-L_a^{i+1}R_\rho^{(2)i}(0))
\end{equation*}
and
\begin{equation*}
M_{\mathcal{L}_2}^-(\tilde v_{i+1,x}-\tilde v_{i+1})(0)\leq \sup_{a\in\mathcal{A}}(L_a^{i+1}R_\rho^{(2)i}(0)-L_a^{i+1}R_\rho^{(2)i}(x)).
\end{equation*}
By Lemma \ref{le2.1} and $(\ref{eq3.5})$,
\begin{eqnarray*}
|L_a^{i+1}R_\rho^{(2)i}(x)-L_a^{i+1}R_\rho^{(2)i}(0)|&=&\Big|\sum_{l=0}^i\frac{w(\rho^l)}{w(\rho^{i+1})}\big(L_a^{l}V_l^{2}(\rho^{i+1-l}x)-L_a^{l}V_l^{2}(0)\big)\Big|\\
&\leq&C\sum_{l=0}^i\rho^{-(i+1-l)\alpha}\big(\|V_l^{2}\|_{C^{4}(B_{\frac{1}{4}}(0))}+\|V_l^{2}\|_{L^{\infty}(\mathbb R^n)}\big)\rho^{i+1-l}|x|\\
&\leq&C\rho^{1-\alpha}|x|.
\end{eqnarray*}
Thus, we have
\begin{equation}\label{eq3.27}
M_{\mathcal{L}_2}^+(\tilde v_{i+1,x}-\tilde v_{i+1})(0)\geq -C\rho^{1-\alpha}|x|\,\,\text{and}\,\,M_{\mathcal{L}_2}^-(\tilde v_{i+1,x}-\tilde v_{i+1})(0)\leq C\rho^{1-\alpha}|x|.
\end{equation}
For any $L\in\mathcal{L}_2(\lambda,\Lambda,\sigma)$, we have
\begin{eqnarray}\label{eq3.28}
L(\tilde v_{i+1,x}-\tilde v_{i+1})(0)&=&\int_{\mathbb R^n}\big(\delta \tilde v_{i+1}(x,y)-\delta v_{i+1}(0,y)\big)K(y)dy\nonumber\\
&=&\int_{B_1(0)}(\delta \tilde v_{i+1}(x,y)-\delta \tilde v_{i+1}(0,y))K(y)dy\nonumber\\
&&+\int_{\mathbb R^n\setminus B_1(0)}(\delta \tilde v_{i+1}(x,y)-\delta \tilde v_{i+1}(0,y))K(y)dy.
\end{eqnarray}
By ($\ref{eqq3.13}$), ($\ref{eq3.25}$) and $L\in\mathcal{L}_2(\lambda,\Lambda,\sigma)$, we have, for any $x\in B_{\frac{1}{4}}(0)$
\begin{eqnarray}
&&\frac{1}{2}\int_{\mathbb R^n\setminus B_1(0)}\big(\delta \tilde v_{i+1}(x,y)-\delta \tilde v_{i+1}(0,y)\big)K(y)dy\nonumber\\
&=&\int_{\mathbb R^n}\tilde v_{i+1}(y)\big(K(y-x)\mathbbm{1}_{B_1^c(0)}(y-x)-K(y)\mathbbm{1}_{B_1^c(0)}(y)\big)dy\nonumber\\
&&-\big(\tilde v_{i+1}(x)-\tilde v_{i+1}(0)\big)\int_{\mathbb R^n\setminus B_1(0)}K(y)dy\nonumber\\
&\leq&\int_{\mathbb R^n\setminus B_{1}(0)}|\tilde v_{i+1}(y)||K(y-x)-K(y)|dy\nonumber\\
&&+\|\tilde v_{i+1}\|_{L^{\infty}(B_{1+|x|}(0))}\int_{B_{1+|x|}(0)\setminus B_{1-|x|}(0)}K(y)dy+C(\rho^{\bar\alpha-\alpha}+\frac{1}{M})|x|\nonumber\\
&&\leq C(\rho^{\bar\alpha-\alpha}+\frac{1}{M})|x|.
\end{eqnarray}
Therefore, we have
\begin{eqnarray}\label{eq3.31}
&&\int_{\mathbb R^n}(\delta \tilde v_{i+1}(x,y)-\delta \tilde v_{i+1}(0,y))K(y)dy\nonumber\\
&\leq&\int_{B_1(0)}(\delta \tilde v_{i+1}(x,y)-\delta \tilde v_{i+1}(0,y))K(y)dy+C(\rho^{\bar\alpha-\alpha}+\frac{1}{M})|x|.
\end{eqnarray}
By ($\ref{eq3.27}$) and ($\ref{eq3.31}$), we obtain
\begin{eqnarray*}
-C\rho^{1-\alpha}|x|&\leq& M_{\mathcal{L}_2}^+(\tilde v_{i+1,x}-\tilde v_{i+1})(0)\\
&\leq&\sup_{\frac{\lambda(2-\sigma)}{|y|^{n+\sigma}}\leq K\leq \frac{\Lambda(2-\sigma)}{|y|^{n+\sigma}}}\int_{B_1(0)}(\delta \tilde v_{i+1}(x,y)-\delta \tilde v_{i+1}(0,y))K(y)dy+C(\rho^{\bar\alpha-\alpha}+\frac{1}{M})|x|.
\end{eqnarray*}
Therefore, we have
\begin{equation*}
\Lambda P(x)-\lambda N(x)\geq -C(\rho^{\bar\alpha-\alpha}+\frac{1}{M})|x|.
\end{equation*}
The second inequality of ($\ref{eqq3.28}$) follows from $M_{\mathcal{L}_2}^-(\tilde v_{i+1,x}-\tilde v_{i+1})(0)\leq C\rho^{1-\alpha}|x|$.
\end{proof}

Now the proof of Theorem \ref{th3.1} follows from the proofs of Lemma 2.14 and Theorem 2.2 in \cite{TJ}.

\section{$C^{\sigma}$ regularity}

Before introducing the main theorem, we remind that, for any $\sigma\in (0,2)$, $[\sigma]$ denotes the largest integer which is less than or equal to $\sigma$.
\begin{theorem}\label{th4.1}
Assume that $2>\sigma\geq \sigma_0>0$ and $K_a(x,y)\in \mathcal{L}_2(\lambda,\Lambda,\sigma)$ for any $a\in\mathcal{A}$. Assume that $w(t)$ is a Dini modulus of continuity satisfying $(H2)_{\bar\beta,\sigma}$, where $\bar\beta$ is given in Theorem \ref{th3.1}. Assume that $f$ satisfies, for some $C_f>0$,
\begin{equation}\label{sa}
|f(x)-f(0)|\leq C_f w(|x|)\,\, \text{and}\,\, |f(x)|\leq C_f,\quad \text{in $B_1(0)$},
\end{equation}
and $K_a(x,y)$ satisfies, for any $0<r\leq 1$ and $a\in\mathcal{A}$
\begin{equation}\label{hs}
\int_{\mathbb R^n}|K_a(x,y)-K_a(0,y)|\min\{|y|^{\min\{2,\sigma+\bar\beta\}},r^{\min\{2,\sigma+\bar\beta\}}\}dy\leq \Lambda w(|x|)r^{\min\{2-\sigma,\bar\beta\}},\quad \text{in $B_1(0)$}.
\end{equation}
If $u$ is a bounded viscosity solution of $(\ref{eq1.1})$, then there exists a polynomial $p(x)$ of degree $[\sigma]$ such that
\begin{equation*}
|u(x)-p(x)|\leq C(\|u\|_{L^{\infty}(\mathbb R^n)}+C_f)|x|^{\sigma}\psi(|x|),\quad \text{in $B_\frac{1}{2}(0)$},
\end{equation*}
and
\begin{equation*}
|D^ip(0)|\leq C(\|u\|_{L^{\infty}(\mathbb R^n)}+C_f),\quad i=0,\cdots,[\sigma],
\end{equation*}
where $\psi(t):=w(t)+\int_{0}^t\frac{w(r)}{r}dr$ and $C$ is a constant depending on $\lambda,\Lambda,n,\sigma_0$, $\sigma$ and $w$.
\end{theorem}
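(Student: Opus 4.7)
The plan is to follow the perturbative scheme of \cite{TJ}, now using Theorem \ref{th3.1} (the recursive Evans-Krylov theorem in the Dini setting) as the regularity engine at each dyadic scale. After normalizing so that $\|u\|_{L^\infty(\mathbb{R}^n)} + C_f \leq 1$, I would construct inductively a sequence $\{v_l\}_{l\geq 0}$ and polynomials $\{q_j\}$ of degree at most $[\sigma]$, together with a small $\rho \in (0,1)$ to be fixed, such that for each $j$ the rescaled residual $w^{-1}(\rho^j)[u(\rho^j x) - q_j(\rho^j x)]$ is close on $B_5(0)$ to a bounded viscosity solution $v_j$ of the $x$-frozen equation obtained by freezing $K_a(x,\cdot)$ at $x=0$, and such that the full family $\{v_l\}_{l=0}^j$ jointly solves the coupled system $(\ref{eqq3.1})$.

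The base case uses a standard compactness approximation (a variant of Lemma \ref{le2.5} or Lemma \ref{le2.6}) to produce $v_0$; the perturbation errors between the equation for $u$ and the frozen equation split into a Dini data error of size $w(\rho)$ from $(\ref{sa})$ and a kernel-variation error from $(\ref{hs})$ tested against $v_0 \in C^{\sigma+\bar\beta}$, which by $(H2)_{\bar\beta,\sigma}$ carries a small factor $\eta(\rho)\to 0$. For the inductive step, I subtract the partial sum of rescaled $v_l$ contributions from $u$ at scale $\rho^{j+1}$, verify that the residual is still an approximate solution of the frozen equation up to an error controlled by $w(\rho^{j+1})\eta(\rho)$, and produce $v_{j+1}$ by compactness. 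The crucial point is that Theorem \ref{th3.1} applied to the augmented family $\{v_l\}_{l=0}^{j+1}$ gives the uniform $C^{\sigma+\bar\beta}(B_1(0))$ bound needed both to estimate the kernel error in $(\ref{hs})$ at the next scale and to continue the iteration uniformly in $j$.

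The polynomial $p$ is then the limit of the polynomials $q_j$: convergence $q_j \to p$ follows from the uniform $C^{\sigma+\bar\beta}(B_1(0))$ bound and the Dini summability $\sum_l w(\rho^l) < +\infty$, and these same estimates give $|D^i p(0)| \leq C$ for $i \leq [\sigma]$. To bound $|u(x) - p(x)|$ for $|x| \sim \rho^j$, I would split as the sum of (i) the scaled residual $|u(x) - q_j(x) - \rho^{j\sigma}w(\rho^j) v_j(\rho^{-j}x)|$, which is of order $w(\rho^j)|x|^\sigma \sim w(|x|)|x|^\sigma$ by construction, and (ii) the telescoping tail $|p(x) - q_j(x)|$, which collects the degree-$[\sigma]$ polynomial corrections at scales $\rho^l$ for $l \geq j$ and is bounded by $C|x|^\sigma \sum_{l \geq j} w(\rho^l)$. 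An Abel-type comparison of the geometric sum $\sum_{l\geq j} w(\rho^l)$ with $\int_0^{\rho^j} w(r)/r\,dr$ produces the Dini-integral term in $\psi(|x|)$.

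The main obstacle is the bookkeeping of error terms across all scales with only a Dini modulus, since no geometric-series gain is available as in the H\"older case. The assumption $(H2)_{\bar\beta,\sigma}$ is tailored precisely to replace the usual $\rho^{\mathrm{power}}$ smallness by the factor $\eta(\rho)\to 0$, so that the compactness step can still be iterated, while $(H1)_{\bar\beta,\sigma}$ is what keeps the recursive Evans-Krylov estimate of Theorem \ref{th3.1} stable along the Dini summation. Verifying that these three mechanisms---the Dini continuity of $f$, the Dini continuity of the kernels $K_a(\cdot,y)$, and the Dini-consistent recursive regularity bound---combine to yield exactly the decay $|x|^\sigma\psi(|x|)$ and not something strictly larger is where most of the technical effort will lie.
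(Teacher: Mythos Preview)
Your plan is the paper's approach: iterate an approximation-by-frozen-equation step at dyadic scales $\rho^j$, feed the resulting correctors $\{v_l\}$ into Theorem \ref{th3.1} to obtain uniform $C^{\sigma+\bar\beta}$ control, and take $p$ to be the sum of the degree-$[\sigma]$ Taylor data of the $u_l=\rho^{l\sigma}w(\rho^l)v_l(\rho^{-l}\cdot)$. The final splitting you describe is also what the paper does, including the comparison of $\sum_{l\geq j}w(\rho^l)$ with the Dini integral.

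One point to tighten, since it is where the argument could stall: the way you phrase the role of $(H2)_{\bar\beta,\sigma}$ (``carries a small factor $\eta(\rho)$'', ``error controlled by $w(\rho^{j+1})\eta(\rho)$'') suggests you intend to gain smallness by sending the \emph{iteration scale} $\rho$ to zero. That creates a circularity: $\rho$ must already be fixed below the $\rho_0$ of Theorem \ref{th3.1}, and the thresholds $\eta_1,\eta_2$ in the approximation lemmas depend on $\rho$ (through $\epsilon=\rho^{\sigma+\bar\beta}$, $R_0=4/\rho$, etc.), so you cannot simply take $\rho$ smaller to beat them. The paper resolves this by using $(H2)_{\bar\beta,\sigma}$ \emph{once}, before the iteration, via an independent rescaling $x\mapsto sx$: this replaces $w$ by the auxiliary modulus $\tilde w$ and the constants $C_f,\Lambda$ in (\ref{sa})--(\ref{hs}) by an arbitrarily small $\gamma$ (since $w(s\cdot)\leq\eta(s)\tilde w(\cdot)$). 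After that, $\rho$ is fixed first, the approximation thresholds are computed, and $\gamma$ (hence $s$) is chosen last. With this decoupling in place your outline goes through; you should also keep track of the ``near-scale'' term $\sum_{l\leq i}\big(u_l(x)-u_l(0)-\nabla u_l(0)\cdot x\big)$ in the final estimate, whose control uses $(H1)_{\bar\beta,\sigma}$ rather than Dini summability.
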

\begin{proof}
By covering and rescaling arguments, we can assume ($\ref{eq1.1}$), ($\ref{sa}$) and ($\ref{hs}$) hold in $B_5(0)$. We will give the proof of Theorem \ref{th4.1} in the most complicated case $\sigma_0\geq1$. Without loss of generality, we can assume that $w(1)>1$.

We claim that we can find a sequence of functions $\{u_l\}_{l=0}^{l=+\infty}$ such that, for any $\rho\leq\rho_0$, $0<\kappa\leq\sigma+\bar\beta$ and $i=0,1,2,\cdots$, we have
\begin{equation}\label{eq4.7}
\inf_{a\in\mathcal{A}}\Big\{\int_{\mathbb R^n}\sum_{l=0}^{i}\delta u_l(x,y)K_a(0,y)dy\Big\}=f(0),\quad \text{in $B_{4\rho^i}$}(0),
\end{equation}
\begin{equation}\label{eq4.8}
(u-\sum_{l=0}^iu_l)(\rho^ix)=0,\quad \text{in $\mathbb R^n\setminus B_4(0)$},
\end{equation}
\begin{equation}\label{eq4.9}
\|u_i\|_{L^{\infty}(\mathbb R^n)}\leq \rho^{\sigma i}w(\rho^i),
\end{equation}
\begin{equation}\label{eq4.11}
\|u_i\|_{C^{\kappa}(B_{(4-\tau)\rho^i}(0))}\leq C_2\rho^{(\sigma-\kappa)i}w(\rho^i)\tau^{-\kappa},
\end{equation}
\begin{equation}\label{eq4.12}
\|u-\sum_{l=0}^iu_l\|_{L^{\infty}(\mathbb R^n)}\leq \rho^{\sigma(i+1)}w(\rho^{i+1}),
\end{equation}
\begin{equation}\label{eq4.13}
[u-\sum_{l=0}^iu_l]_{C^{\alpha_1}(B_{(4-3\tau)\rho^i}(0))}\leq 8C_1\rho^{(\sigma-\alpha_1)i}w(\rho^i)\tau^{-3},
\end{equation}
where $\rho_0$ is given by Theorem \ref{th3.1}, $\tau$ is an arbitrary constant in $(0,1]$, $\alpha_1$ and $C_1$ are positive constants depending on $n$, $\lambda$, $\Lambda$, $\sigma_0$, and $C_2$ is the constant in $(\ref{eq3.1})$. 

Suppose that we have $(\ref{eq4.7})$-$(\ref{eq4.13})$. Then, for any $\rho^{i+1}\leq |x|< \rho^i$
\begin{eqnarray*}
\Big|u(x)-\sum_{l=0}^{+\infty}u_l(0)-\sum_{l=0}^{+\infty}\nabla u_l(0)\cdot x\Big|&\leq&\Big|u(x)-\sum_{l=0}^{i}u_l(x)\Big|+\Big|\sum_{l=0}^{i}\big(u_l(x)-u_l(0)-\nabla u_l(0)\cdot x\big)\Big|\\
&&+\Big|\sum_{l=i+1}^{+\infty}u_l(0)\Big|+\Big|\sum_{l=i+1}^{+\infty}\nabla u_l(0)\cdot x\Big|\\
&\leq&\rho^{\sigma(i+1)}w(\rho^{i+1})+C|x|^{\min\{2,\sigma+\bar\beta\}}\sum_{l=0}^{i}\rho^{-\min\{2-\sigma,\bar\beta \}l}w(\rho^l)\\
&&+\sum_{l=i+1}^{+\infty}\rho^{\sigma l}w(\rho^{l})+C|x|\sum_{l=i+1}^{+\infty}\rho^{(\sigma-1)l}w(\rho^l).
\end{eqnarray*}
By $(H1)_{\bar\beta,\sigma}$, we have, for $\rho^{i+1}\leq |x|<\rho^i$
\begin{eqnarray*}
|x|^{\min\{2,\sigma+\bar\beta\}}\sum_{l=0}^{i}\rho^{-\min\{2-\sigma,\bar\beta \}l}w(\rho^l)
&\leq&\rho^{i\sigma}w(\rho^i)\sum_{l=0}^{i}\rho^{\min\{2-\sigma,\bar\beta\}(i-l)}\frac{w(\rho^l)}{w(\rho^i)}\\
&\leq&\rho^{i\sigma}w(\rho^i)\sum_{l=0}^{i}\rho^{\big(\min\{2-\sigma,\bar\beta\}-\beta\big)(i-l)}\\
&\leq&\rho^{i\sigma}w(\rho^i)\sum_{l=0}^{+\infty}\rho^{\big(\min\{2-\sigma,\bar\beta\}-\beta\big)l}\\
&\leq&C\rho^{i\sigma}w(\rho^i)\leq C\rho^{-\beta-\sigma}\rho^{(i+1)\sigma}w(\rho^{i+1})\frac{\rho^{\beta}w(\rho^i)}{w(\rho^{i+1})}\\
&\leq&C\rho^{(i+1)\sigma}w(\rho^{i+1}).
\end{eqnarray*}
We notice that $\min\{2,\sigma+\bar\beta\}-\min\{2-\sigma,\bar\beta\}=\sigma$. Thus, for $\rho^{i+1}\leq |x|<\rho^i$
\begin{eqnarray*}
&&\Big|u(x)-\sum_{l=0}^{+\infty}u_l(0)-\sum_{l=0}^{+\infty}\nabla u_l(0)\cdot x\Big|\\
&\leq&C\rho^{\sigma(i+1)}w(\rho^{i+1})+\big(\rho^{\sigma(i+1)}
+C\rho^i\rho^{(\sigma-1)(i+1)}\big)\sum_{l=i+1}^{+\infty}w(\rho^l)\\
&\leq&C\rho^{\sigma(i+1)}w(\rho^{i+1})+C\rho^{\sigma(i+1)}\sum_{l=i+1}^{+\infty}w(\rho^l)\\
&\leq&C\rho^{\sigma(i+1)}\psi(\rho^{i+1}),
\end{eqnarray*}
where $\psi(t)=w(t)+\int_{0}^t\frac{w(r)}{r}dr$.

We first prove the claim for $i=0$. Let $u_0$ be the viscosity solution of 
\begin{equation}\label{eq4.1}
\left\{\begin{array}{ll}
I_0u_0:=\inf_{a\in\mathcal{A}}\Big\{\int_{\mathbb R^n}\delta u_0(x,y)K_a(0,y)\Big\}-f(0)=0,\quad &\hbox{in $ B_4(0)$},\\
\qquad \qquad \qquad \qquad \qquad \qquad \qquad \qquad\quad\qquad\, u_0=u,
&\hbox{in $B_4^c(0)$}.\end{array}\right.
\end{equation}
Then, by Lemma 3.1 in \cite{TJ}, we have 
\begin{equation}
\|u_0\|_{L^{\infty}(\mathbb R^n)}\leq C(\|u\|_{L^{\infty}(\mathbb R^n)}+\|f\|_{L^{\infty}(B_5(0))}).
\end{equation}
By normalization, we can assume that  
\begin{equation*}
\|u_0\|_{L^{\infty}(\mathbb R^n)}\leq \frac{1}{2}\quad \text{and}\quad \|u\|_{L^{\infty}(\mathbb R^n)}+\|f\|_{L^{\infty}(B_5(0))}\leq \frac{1}{2}.
\end{equation*}
Using Theorem \ref{th3.1}, we have, for any $0<\kappa\leq \sigma+\bar\beta$
\begin{equation*}
\|u_0\|_{C^{\kappa}(B_{4-\tau}(0))}\leq C_2\tau^{-\kappa},
\end{equation*}
where $C_2$ is the constant in $(\ref{eq3.1})$. Since $u$ is a bounded viscosity solution of $(\ref{eq1.1})$, it follows from Theorem $12.1$ in \cite{LL1} that there exist constants $\alpha_1>0$ and $C_1>0$, depending only on $n$, $\lambda$, $\Lambda$, $\sigma_0$, such that, for any $0<\tau\leq 1$
\begin{equation}\label{eq4.14}
\|u\|_{C^{\alpha_1}(B_{4-\tau}(0))}\leq \frac{C_1}{2}\tau^{-\alpha_1}.
\end{equation}
Let $\epsilon:=\rho^{\sigma+\bar\beta}\leq \rho^{\sigma}w(\rho)$, $M=1$ and $w_1(r):=r^{\alpha_1}$. Then, for these $w_1$, $\epsilon$ and $M$, there exist $\eta_1>0$ and $R>5$ such that Lemma \ref{le2.5} holds. 
Without loss of generality, we can assume that, for any $0<r\leq 1$
\begin{equation*}
|f(x)-f(0)|\leq\gamma w(|x|),\quad \text{in $B_5(0)$},
\end{equation*}
\begin{equation*}
\int_{B_r(0)}|K_a(x,y)-K_a(0,y)||y|^{\min\{2,\sigma+\bar\beta\}}dy\leq\gamma w(|x|)r^{\min\{2-\sigma,\bar\beta\}},\quad \text{in $B_5(0)$},
\end{equation*}
\begin{equation*}
\int_{B_r^c(0)}|K_a(x,y)-K_a(0,y)|dy\leq\gamma w(|x|)r^{-\sigma},\quad \text{in $B_5(0)$},
\end{equation*}
\begin{equation}\label{eq4.18}
|u(x)-u(y)|\leq w_1(|x-y|),\quad \text{for any $x\in B_R(0)\setminus B_4(0)$ and $y\in \mathbb R^n\setminus B_4(0)$},
\end{equation}
where $\gamma$ is a sufficiently small constant we determine later. This can be achieved by scaling. For a sufficiently small $s>0$, if we let
\begin{equation*}
\tilde K_a(x,y)=s^{n+\sigma}K_a(sx,sy)\in\mathcal{L}_2(\lambda,\Lambda,\sigma),
\end{equation*}
\begin{equation*}
\tilde u(x)=u(sx),
\end{equation*}
\begin{equation*}
\tilde f(x)=s^{\sigma}f(sx),
\end{equation*}
then we see that
\begin{equation*}
\tilde I \tilde u(x):=\inf_{a\in\mathcal{A}} \int_{\mathbb R^n}\delta \tilde u(x,y) \tilde K_a(x,y)dy=\tilde f(x),\quad \text{in $B_5(0)$}.
\end{equation*}
It follows from $(H2)_{\bar\beta,\sigma}$ that, if we choose $s$ sufficiently small, then for any $x\in B_5(0)$
\begin{equation*}
|\tilde f(x)-\tilde f(0)|\leq C_f s^{\sigma}w(s|x|)\leq C_f s^{\sigma}\eta(s) \tilde w(|x|)\leq \gamma \tilde w(|x|),
\end{equation*}
\begin{eqnarray*}
&&\int_{B_r(0)}|\tilde K_a(x,y)-\tilde K_a(0,y)||y|^{\min\{2,\sigma+\bar\beta\}}dy\\
&=&s^{-\min\{2-\sigma,\bar\beta\}}\int_{B_{sr}(0)}|K_a(sx,y)-K_a(0,y)||y|^{\min\{2,\sigma+\bar\beta\}}dy\\
&\leq&\Lambda w(s|x|)r^{\min\{2-\sigma,\bar\beta\}}\leq \Lambda\eta(s)\tilde w(|x|)r^{\min\{2-\sigma,\bar\beta\}}\leq\gamma\tilde w(|x|)r^{\min\{2-\sigma,\bar\beta\}},
\end{eqnarray*}
and
\begin{eqnarray*}
\int_{B_r^c(0)}|\tilde K_a(x,y)-\tilde K_a(0,y)|dy&=&s^{\sigma}\int_{B_{sr}^c(0)}|K_a(sx,y)-K_a(0,y)|dy\\
&\leq&\Lambda w(s|x|) r^{-\sigma}\leq \Lambda \eta(s)\tilde w(|x|)r^{-\sigma}\leq \gamma\tilde w(|x|)r^{-\sigma},
\end{eqnarray*}
where $\tilde w(t)$ is another Dini modulus of continuity satisfying $(H1)_{\bar\beta,\sigma}$ and $\eta(s)$ is a positive function of $s$ such that $\lim_{s\to 0^+}\eta(s)=0$. Using $(\ref{eq4.14})$ with $\tau=1$, we have, if we let $s$ sufficiently small,
\begin{equation*}
\|\tilde u\|_{C^{\alpha_1}(B_{2R}(0))}\leq\|\tilde u\|_{C^{\alpha_1}(B_{\frac{3}{s}
}(0))}\leq  s^{\alpha_1}\frac{C_1}{2}\leq 1.
\end{equation*}
Since $R>5$ and $\|\tilde u\|_{C^{\alpha_1}(B_{2R}(0))}\leq 1$,
\begin{eqnarray*}
|\tilde u(x)-\tilde u(y)|\leq |x-y|^{\alpha_1}\quad \text{for any $x\in B_R(0)\setminus B_4(0)$ and $y\in B_{2R}(0)\setminus B_4(0)$},
\end{eqnarray*}
and
\begin{eqnarray*}
|\tilde u(x)-\tilde u(y)|\leq 1\leq |x-y|^{\alpha_1}\quad \text{for $x\in B_R(0)\setminus B_4(0)$ and $y\in B_{2R}^c(0)$}.
\end{eqnarray*}
Therefore, $(\ref{eq4.18})$ holds for $\tilde u$.

If $x\in B_4(0)$, $h\in C^{1,1}(x)$, $\|h\|_{L^{\infty}(\mathbb R^n)}\leq M$ and $|h(y)-h(x)-(y-x)\cdot\nabla h(x)|\leq \frac{M}{2}|x-y|^2$ for any $y\in B_1(x)$, we have
\begin{eqnarray}
\|I-&I_0&\|_{B_4(0)}\leq\frac{1}{M+1}\Big\{\int_{\mathbb R^n}|\delta h(x,y)||K_a(x,y)-K_a(0,y)|dy+f(x)-f(0)\Big\}\nonumber\\
&\leq&\frac{M}{M+1}\Big\{\int_{B_1(0)}|y|^2|K_a(x,y)-K_a(0,y)|\Big\}dy+4\int_{\mathbb R^n\setminus B_1(0)}|K_a(x,y)-K_a(0,y)|dy\Big\}\nonumber\\
&&+f(x)-f(0)\leq6\gamma w(|x|)\leq 6\gamma w(5).\label{eq4.6}
\end{eqnarray}
We will choose $\gamma<\min\{\frac{\eta_1}{6w(5)},\frac{1}{(C_2+4)w(4)}\}$. By Lemma \ref{le2.5}, we have 
\begin{equation*}
\|u-u_0\|_{L^{\infty}(B_4(0))}\leq\epsilon\leq\rho^{\sigma}w(\rho),
\end{equation*}
and thus
\begin{equation*}
\|u-u_0\|_{L^{\infty}(\mathbb R^n)}\leq\|u-u_0\|_{L^{\infty}(B_4(0))}\leq\epsilon\leq\rho^{\sigma}w(\rho).
\end{equation*}

Let $v(x)=u(x)-u_0(x)$. Since $u_0\in C_{\rm loc}^{\sigma+\bar\beta}(B_4(0))$, $v$ is a viscosity solution of 
\begin{eqnarray*}
I^{(0)}v(x):&=&\inf_{a\in\mathcal{A}}\Big\{\int_{\mathbb R^n}\delta v(x,y)K_a(x,y)+\delta u_0(x,y)K_a(x,y)dy\Big\}-f(0)\\
&=&f(x)-f(0)\quad \text{in $B_4(0)$}.
\end{eqnarray*} 
It is clear that $I^{(0)}$ is uniformly elliptic with respect to $\mathcal{L}_0(\lambda,\Lambda,\sigma)$. Since $\gamma<\frac{1}{(C_2+4)w(4)}$, we have for any $x\in B_{4-2\tau}(0)$
\begin{eqnarray*}
|I^{(0)}0(x)|&=&\Big|\inf_{a\in\mathcal{A}}\Big\{\int_{\mathbb R^n}\delta u_0(x,y)K_a(x,y)dy\Big\}-f(0)\Big|\\
&\leq&\sup_{a\in\mathcal{A}}\int_{\mathbb R^n}|\delta u_0(x,y)||K_a(x,y)-K_a(0,y)|dy\\
&\leq&\sup_{a\in\mathcal{A}}\Big\{\int_{B_\tau(0)}C_2\tau^{-\min\{2,\sigma+\bar\beta\}}|y|^{\min\{2,\sigma+\bar\beta\}}|K_a(x,y)-K_a(0,y)|dy\\
&&+4\int_{\mathbb R^n\setminus B_{\tau}(0)}|K_a(x,y)-K_a(0,y)|dy\Big\}\\
&\leq&\gamma C_2\tau^{-\min\{2,\sigma+\bar\beta\}}w(|x|)\tau^{\min\{2-\sigma,\bar\beta\}}+4\gamma w(|x|)\tau^{-\sigma}\\
&=&\gamma(C_2+4)w(|x|)\tau^{-\sigma}\leq \gamma(C_2+4)w(4)\tau^{-\sigma}\leq \tau^{-\sigma}.
\end{eqnarray*}
It follows from Theorem 12.1 in \cite{LL1} that
\begin{equation*}
\|v\|_{C^{\alpha_1}(B_{4-3\tau}(0))}\leq C_1\tau^{-\alpha_1}(\tau^{-\sigma}+w(4)\gamma+1)\leq 8C_1\tau^{-3},
\end{equation*}
and thus
\begin{equation*}
[u-u_0]_{C^{\alpha_1}(B_{4-3\tau}(0))}\leq 8C_1\tau^{-3}.
\end{equation*}

We then assume $(\ref{eq4.7})$-$(\ref{eq4.13})$ hold up to $i\geq 0$ and we will show that they hold for $i+1$ as well. Let
\begin{equation*}
U(x)=\rho^{-(i+1)\sigma}w^{-1}(\rho^{i+1})\big(u-\sum_{l=0}^iu_l\big)(\rho^{i+1}x),
\end{equation*}
\begin{equation*}
v_l(x)=\rho^{-l\sigma}w^{-1}(\rho^l)u_l(\rho^lx),
\end{equation*}
and 
\begin{equation*}
K_a^{i+1}(x,y)=\rho^{(n+\sigma)(i+1)}K_a(\rho^{i+1}x,\rho^{i+1}y).
\end{equation*}
Since $u_l\in C_{\rm loc}^{\sigma+\bar\beta}(B_{4\rho^l}(0))$ for each $0\leq l\leq i$, then $U$ is a viscosity solution of 
\begin{equation*}
I^{(i+1)}U=w^{-1}(\rho^{i+1})f(\rho^{i+1}x)-w^{-1}(\rho^{i+1})f(0),\quad \text{in $B_{\frac{4}{\rho}}(0)$},
\end{equation*}
where
\begin{eqnarray*}
I^{(i+1)}U:&=&\inf_{a\in\mathcal{A}}\Big\{\int_{\mathbb R^n}\big(\delta U(x,y)+\sum_{l=0}^{i}\rho^{-(i+1)\sigma}w^{-1}(\rho^{i+1})\delta u_l(\rho^{i+1}x,\rho^{i+1}y)\big)K_a^{i+1}(x,y)dy\Big\}\\
&&-w^{-1}(\rho^{i+1})f(0)\\
&=&\inf_{a\in\mathcal{A}}\Big\{\int_{\mathbb R^n}\big(\delta U(x,y)+\sum_{l=0}^{i}\rho^{-(i+1-l)\sigma}w^{-1}(\rho^{i+1})w(\rho^l)\delta v_l(\rho^{i+1-l}x,\rho^{i+1-l}y)\big)K_a^{i+1}(x,y)dy\Big\}\\
&&-w^{-1}(\rho^{i+1})f(0).
\end{eqnarray*}
It is clear that $I^{(i+1)}$ is uniformly elliptic with respect to $\mathcal{L}_0(\lambda,\Lambda,\sigma)$. Denote 
\begin{eqnarray*}
I_0^{(i+1)}v:&=&\inf_{a\in\mathcal{A}}\Big\{\int_{\mathbb R^n}\big(\delta v(x,y)+\sum_{l=0}^{i}\rho^{-(i+1)\sigma}w^{-1}(\rho^{i+1})\delta u_l(\rho^{i+1}x,\rho^{i+1}y)\big)K_a^{i+1}(0,y)dy\Big\}\\ 
&&-w^{-1}(\rho^{i+1})f(0)\\
&=&\inf_{a\in\mathcal{A}}\Big\{\int_{\mathbb R^n}\big(\delta v(x,y)+\sum_{l=0}^{i}\rho^{-(i+1-l)\sigma}w^{-1}(\rho^{i+1})w(\rho^l)\delta v_l(\rho^{i+1-l}x,\rho^{i+1-l}y)\big)K_a^{i+1}(0,y)dy\Big\}\\
&&-w^{-1}(\rho^{i+1})f(0),
\end{eqnarray*}
which is also uniformly elliptic with respect to $\mathcal{L}_0(\lambda,\Lambda,\sigma)$. Let $v_{i+1}$ be the viscosity solution of 
\begin{equation*}
\left\{\begin{array}{ll}
I_0^{(i+1)}v_{i+1}=0,\quad &\hbox{in $ B_4(0)$},\\
\qquad\,\,\, v_{i+1}=U, &\hbox{in $B_4^c(0)$}.\end{array}\right.
\end{equation*}
With our induction assumption $(\ref{eq4.7})$, it follows that for all $m=0,\cdots,i+1$,
\begin{equation*}
\inf_{a\in\mathcal{A}}\int_{\mathbb R^n}\big(\sum_{l=0}^m\rho^{-(m-l)\sigma}w^{-1}(\rho^m)w(\rho^l)\delta v_l(\rho^{m-l}x,\rho^{m-l}y)\big)K_a^{m}(0,y)dy=w^{-1}(\rho^m)f(0),\quad\text{in $B_4(0)$}.
\end{equation*}
It follows from Theorem \ref{th3.1} that $v_{i+1}\in C_{\rm loc}^{\sigma+\bar\beta}(B_4(0))$ and for any $0<\kappa\leq \sigma+\bar\beta$
\begin{equation*}
\|v_{i+1}\|_{C^{\kappa}(B_{4-\tau}(0))}\leq C_2\tau^{-\kappa}.
\end{equation*}
We then want to prove that 
\begin{equation*}
\|v_{i+1}\|_{L^{\infty}(\mathbb R^n)}\leq\|U\|_{L^{\infty}(\mathbb R^n)}\leq 1.
\end{equation*}
Since $\|u\|_{L^{\infty}(\mathbb R^n)}\leq\frac{1}{2}$, $(\ref{eq4.9})$, $(\ref{eq4.14})$ and $u_l\in C_{\rm loc}^{\sigma+\bar\beta}(B_{4\rho^l}(0))$ for any $0\leq l\leq i$ hold, it follows from Theorem 3.2 in \cite{LL2} that $v_{i+1}\in C(\overline {B_4(0)})$. Suppose that there exists $x_0\in B_4(0)$ such that $v_{i+1}(x_0)=\max_{B_4(0)}v_{i+1}>\|U\|_{L^{\infty}(\mathbb R^n\setminus B_4(0))}$. Then
\begin{equation}\label{eq4.19}
\sup_{a\in\mathcal{A}}\int_{\mathbb R^n}\delta v_{i+1}(x_0,y)K_a^{i+1}(0,y)dy<0.
\end{equation}
Since $I_0^{(i+1)}0(x)=0$ for any $x\in B_4(0)$, then we have 
\begin{equation*}
0= I_0^{(i+1)}v_{i+1}(x)-I_0^{(i+1)}0(x)\leq \sup_{a\in\mathcal{A}}\int_{\mathbb R^n}\delta v_{i+1}(x,y)K_a^{i+1}(0,y)dy,\quad \text{for any $B_4(0)$},
\end{equation*}
which contradicts $(\ref{eq4.19})$. Similarly, we have $v_{i+1}(x)\geq -\|U\|_{L^{\infty}(\mathbb R^n\setminus B_4(0))}$ for any $x\in B_4(0)$. By induction assumptions, we have $\|U\|_{L^{\infty}(\mathbb R^n)}\leq 1$, $U=0$ in $B_{\frac{4}{\rho}}^c(0)$ and 
\begin{equation*}
[U]_{C^{\alpha_1}(B_{\frac{4-3\tau}{\rho}}(0))}\leq 8C_1\frac{w(\rho^i)}{w(\rho^{i+1})}\rho^{\alpha_1-\sigma}\tau^{-3}\leq 8C_1\rho^{-3}\tau^{-3}.
\end{equation*}
By Lemma \ref{le2.3}, we have, for any $x_1, x_2\in B_4(0)$
\begin{eqnarray*}
&&\Big|\int_{\mathbb R^n}\sum_{l=0}^{i}\rho^{-(i+1-l)\sigma}w^{-1}(\rho^{i+1})w(\rho^l)\big(\delta v_l(\rho^{i+1-l}x_1,\rho^{i+1-l}y)-\delta v_l(\rho^{i+1-l}x_2,\rho^{i+1-l}y)\big)K_a^{i+1}(0,y)dy\Big|\\
&=&\Big|\sum_{l=0}^i\rho^{-(i+1-l)\sigma}w^{-1}(\rho^{i+1})w(\rho^l)\int_{\mathbb R^n}\big(\delta v_l(\rho^{i+1-l}x_1,\rho^{i+1-l}y)-\delta v_l(\rho^{i+1-l}x_2,\rho^{i+1-l}y)\big)K_a^{i+1}(0,y)dy\Big|\\
&=&\Big|\sum_{l=0}^{i}w^{-1}(\rho^{i+1})w(\rho^l)\int_{\mathbb R^n}\big(\delta v_l(\rho^{i+1-l}x_1,y)-\delta v_l(\rho^{i+1-l}x_2,y)\big)K_a^{l}(0,y)dy\Big|\\
&\leq&\sum_{l=0}^iw^{-1}(\rho^{i+1})w(\rho^l)\|L_a^{l}v_l\|_{C^{\bar\beta}(B_{4\rho^{i+1-l}}(0))}\rho^{(i+1-l)\bar\beta}|x_1-x_2|^{\bar\beta}\\
&\leq&\sum_{l=0}^{i}w^{-1}(\rho^{i+1})w(\rho^l)C(\|v_l\|_{C^{\sigma+\bar\beta}(B_{5\rho^{i+1-l}}(0))}+\|v_l\|_{L^{\infty}(\mathbb R^n)})\rho^{(i+1-l)\bar\beta}|x_1-x_2|^{\bar\beta}\\
&\leq&\sum_{l=0}^iw^{-1}(\rho^{i+1})w(\rho^l)C(C_2+1)\rho^{(i+1-l)\bar\beta}|x_1-x_2|^{\bar\beta}\\
&\leq&\sum_{l=0}^iC\frac{w(\rho^l)}{w(\rho^{i+1})}\rho^{(i+1-l)\bar\beta}|x_1-x_2|^{\bar\beta}\leq C\rho^{\bar\beta-\beta}|x_1-x_2|^{\bar\beta}.
\end{eqnarray*}
Then we will show that we can choose $\gamma$ sufficiently small such that
\begin{equation}\label{a4.19}
\|I^{(i+1)}-I_0^{(i+1)}\|_{B_4(0)}\leq \eta_2\leq 1,
\end{equation}
where $\eta_2$ is given in Lemma \ref{le2.6} depending on $\epsilon=\rho^{\sigma+\bar\beta}$, $R_0=\frac{4}{\rho}$, $M_0=C\rho^{\bar\beta-\beta}$, $M_1=1$, $M_2=8C_1\rho^{-3}$ and $M_3=C_2$. For any $x\in B_4(0)$, $h\in C^{1,1}(x)$, $\|h\|_{L^{\infty}(\mathbb R^n)}\leq M$, $|h(y)-h(x)-(y-x)\cdot\nabla h(x)|\leq \frac{M}{2}|x-y|^2$ for any $y\in B_1(x)$, we have
\begin{eqnarray*}
\|I^{(i+1)}&-&I_0^{(i+1)}\|_{B_4(0)}\leq \frac{1}{M+1}\sup_{a\in\mathcal{A}}\Big|\int_{\mathbb R^n}\delta h(x,y)\big(K_a^{i+1}(x,y)-K_a^{i+1}(0,y)\big)dy\Big|\\
&&+\sum_{l=0}^i\sup_{a\in\mathcal{A}}\Big|\int_{\mathbb R^n}\rho^{-(i+1)\sigma}w^{-1}(\rho^{i+1})\delta u_l(\rho^{i+1}x,\rho^{i+1}y)\big(K_a^{i+1}(x,y)-K_a^{i+1}(0,y)\big)dy\Big|\\
&&=I_1+I_2.
\end{eqnarray*}
It follows from the same computation as that in $(\ref{eq4.6})$ that 
\begin{equation*}
|I_1|\leq 5\gamma w(5).
\end{equation*}
By $(\ref{eq4.11})$, we have, for any $a\in\mathcal{A}$, $l=0,\cdots,i$ and $x\in B_4(0)$
\begin{eqnarray*}
&&\Big|\int_{\mathbb R^n}\delta u_l(\rho^{i+1}x,\rho^{i+1}y)\big(K_a^{i+1}(x,y)-K_a^{i+1}(0,y)\big)dy\Big|\\
&\leq&\rho^{\sigma(i+1)}\int_{\mathbb R^n}|\delta u_l(\rho^{i+1}x,y)||K_a(\rho^{i+1}x,y)-K_a(0,y)|dy\\
&\leq&\rho^{\sigma(i+1)}\int_{B_{\rho^l}(0)}C_2\rho^{-\min\{2-\sigma,\bar\beta \}l}w(\rho^l)|y|^{\min\{2,\sigma+\bar\beta\}}|K_a(\rho^{i+1}x,y)-K_a(0,y)|dy\\
&&+\rho^{\sigma(i+1)}\int_{\mathbb R^n\setminus B_{\rho^l}(0)}4\rho^{l\sigma}w(\rho^l)|K_a(\rho^{i+1}x,y)-K_a(0,y)|dy\\
&\leq&(C_2+4)\rho^{\sigma(i+1)}w(\rho^l)\gamma w(\rho^{i+1}|x|)\\
&\leq&(C_2+4)\rho^{\sigma(i+1)}w(\rho^l)\gamma w(\rho^i).
\end{eqnarray*}
Thus, we have
\begin{equation*}
I_2\leq (C_2+4)\frac{w(\rho^i)}{w(\rho^{i+1})}\gamma\sum_{l=0}^iw(\rho^l)\leq (C_2+4)\rho^{-1}\gamma\sum_{l=0}^{+\infty}w(\rho^l)<+\infty.
\end{equation*}
We finally choose $\gamma$ such that 
\begin{equation*}
\gamma\leq\min\Big\{\frac{\eta_2}{\big(5w(5)+(C_2+4)\rho^{-1}\sum_{l=0}^{+\infty}w(\rho^l)\big)},\frac{\eta_1}{6w(5)},\frac{1}{(C_2+4)w(4)}\Big\}.
\end{equation*}
Therefore, $(\ref{a4.19})$ holds.
By Lemma \ref{le2.6}, we have
\begin{equation*}
\|U-v_{i+1}\|_{L^{\infty}(\mathbb R^n)}=\|U-v_{i+1}\|_{L^{\infty}(B_4(0))}\leq\epsilon=\rho^{\sigma+\bar\beta}\leq\rho^{\sigma}\frac{w(\rho^{i+2})}{w(\rho^{i+1})}.
\end{equation*}
Let 
\begin{equation*}
u_{i+1}(x)=\rho^{\sigma(i+1)}w(\rho^{i+1})v_{i+1}(\rho^{-(i+1)}x),
\end{equation*}
and 
\begin{equation*}
V=U-v_{i+1}=\rho^{-\sigma(i+1)}w^{-1}(\rho^{i+1})\big(u-\sum_{l=0}^{i+1}u_l\big)(\rho^{i+1}x).
\end{equation*}
Then, for any $x\in B_4(0)$ we have
\begin{eqnarray*}
\bar I^{(i+1)}V:&=&\inf_{a\in\mathcal{A}}\int_{\mathbb R^n}\delta V(x,y)+\sum_{l=0}^{i+1}\rho^{-\sigma(i+1)}w^{-1}(\rho^{i+1})\delta u_l(\rho^{i+1}x,\rho^{i+1}y)K_a^{i+1}(x,y)dy\\
&&-w^{-1}(\rho^{i+1})f(0)\\
&=&w^{-1}(\rho^{i+1})f(\rho^{i+1}x)-w^{-1}(\rho^{i+1})f(0).
\end{eqnarray*}
Moreover, we have for any $x\in B_{4-2\tau}(0)$
\begin{eqnarray*}
\bar I^{(i+1)}0&=&\inf_{a\in\mathcal{A}}\Big\{\int_{\mathbb R^n}\sum_{l=0}^{i+1}\rho^{-\sigma(i+1)}w^{-1}(\rho^{i+1})\delta u_l(\rho^{i+1}x,\rho^{i+1}y)K_a^{i+1}(x,y)dy\Big\}\\
&&-w^{-1}(\rho^{i+1})f(0)\\
&=&\inf_{a\in\mathcal{A}}\Big\{\int_{\mathbb R^n}\sum_{l=0}^{i+1}\rho^{-\sigma(i+1)}w^{-1}(\rho^{i+1})\delta u_l(\rho^{i+1}x,\rho^{i+1}y)K_a^{i+1}(x,y)dy\Big\}\\
&&-\inf_{a\in\mathcal{A}}\Big\{\int_{\mathbb R^n}\sum_{l=0}^{i+1}\rho^{-(i+1)\sigma}w^{-1}(\rho^{i+1})\delta u_l(\rho^{i+1}x,\rho^{i+1}y)K_a^{i+1}(0,y)dy\Big\}\\
&\leq&\sup_{a\in\mathcal{A}}\Big\{\sum_{l=0}^{i+1}\int_{\mathbb R^n}\rho^{-\sigma(i+1)}w^{-1}(\rho^{i+1})\delta u_l(\rho^{i+1}x,\rho^{i+1}y)\big(K_a^{i+1}(x,y)-K_a^{i+1}(0,y)\big)dy\Big\}\\
&\leq&(C_2+4)\rho^{-1}\tau^{-\sigma}\gamma\sum_{l=0}^{+\infty}w(\rho^l)\leq\eta_2\tau^{-\sigma}
\leq\tau^{-\sigma}.
\end{eqnarray*}
It is clear that $\bar I^{(i+1)}$ is uniformly elliptic with respect to $\mathcal{L}_0(\lambda,\Lambda,\sigma)$. Thus, for any $x\in B_{4-2\tau}(0)$
\begin{eqnarray*}
M_{\mathcal{L}_0}^+V&\geq& \bar I^{(0)}V-\bar I^{(0)}0=w^{-1}(\rho^{i+1})(f(\rho^{i+1}x)-f(0))-\tau^{-\sigma}\\
&\geq&-\gamma w^{-1}(\rho^{i+1})w(\rho^{i+1}|x|)-\tau^{-\sigma}\\
&\geq&-\gamma w^{-1}(\rho^{i+1})w(4\rho^{i+1})-\tau^{-\sigma}\\
&\geq&-\gamma\rho^{-1}-\tau^{-\sigma},
\end{eqnarray*}
and similarly,
\begin{equation*}
M_{\mathcal{L}_0}^-V\leq\gamma\rho^{-1}+\tau^{-\sigma}.
\end{equation*}
It follows from Theorem 12.1 of \cite{LL1} that,
\begin{eqnarray*}
[V]_{C^{\alpha_1}(B_{4-3\tau}(0))}&\leq& C_1\tau^{-\alpha_1}(\|V\|_{L^{\infty}(\mathbb R^n)}+\gamma\rho^{-1}+\tau^{-\sigma})\\
&\leq&C_1\tau^{-\alpha_1}(\epsilon+\gamma\rho^{-1}+\tau^{-\sigma})\\
&\leq& 8C_1\tau^{-3}.
\end{eqnarray*}
Thus, we finish the proof.
\end{proof}

\begin{corollary}
Assume that $2>\sigma\geq \sigma_0>0$ and $K_a(x,y)\in \mathcal{L}_2(\lambda,\Lambda,\sigma)$ for any $a\in\mathcal{A}$. Assume that $w(t)$ is a Dini modulus of continuity satisfying $(H2)_{\bar\beta,\sigma}$, where $\bar\beta$ is given in Theorem \ref{th3.1}. Assume that there exists $C_f>0$ such that, for any $x_1,x_2\in B_1(0)$
\begin{equation*}
|f(x_1)-f(x_2)|\leq C_f w(|x_1-x_2|)\,\, \text{and}\,\, \|f\|_{L^{\infty}(B_1(0))}\leq C_f
\end{equation*}
and $K_a(x,y)$ satisfies, for any $0<r\leq 1$
\begin{equation*}
\int_{\mathbb R^n}|K_a(x_1,y)-K_a(x_2,y)|\min\{|y|^{\min\{2,\sigma+\bar\beta\}},r^{\min\{2,\sigma+\bar\beta\}}\}dy\leq \Lambda w(|x_1-x_2|)r^{\min\{2-\sigma,\bar\beta\}}.
\end{equation*}
If $u$ is a bounded viscosity solution of $(\ref{eq1.1})$, then there exists a constant $C>0$ depending on $\lambda,\Lambda,n,\sigma_0$, $\sigma$ and $w$ such that
\begin{equation*}
\|u\|_{C^{\sigma}(B_\frac{1}{2}(0))}\leq C(\|u\|_{L^{\infty}(\mathbb R^n)}+C_f).
\end{equation*}
\end{corollary}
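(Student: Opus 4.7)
\noindent The plan is to deduce the $C^\sigma$ estimate by applying Theorem \ref{th4.1} at every base point $x_0\in B_{1/2}(0)$ and then converting the resulting pointwise polynomial expansions into a uniform H\"older norm via a two-point comparison. I would first verify that the hypotheses of Theorem \ref{th4.1} transfer to the shifted function $\tilde u(y):=u(x_0+y)$ for any fixed $x_0\in B_{1/2}(0)$, with constants independent of $x_0$. Because the corollary's assumptions on $f$ and $K_a$ are stated for arbitrary pairs $x_1,x_2\in B_1(0)$, restricting to $B_{1/2}(x_0)\subset B_1(0)$ and rescaling back to the unit ball produces exactly the centered assumptions \eqref{sa} and \eqref{hs} at the base point $x_0$, with the same $w$, $C_f$, $\Lambda$, and $\sigma_0$. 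Theorem \ref{th4.1} then delivers, for each $x_0\in B_{1/2}(0)$, a polynomial $p_{x_0}$ of degree $[\sigma]$ satisfying
\begin{equation*}
|u(x)-p_{x_0}(x)|\leq C_\star|x-x_0|^{\sigma}\psi(|x-x_0|)\quad\text{on a fixed neighborhood of }x_0,
\end{equation*}
together with $|D^ip_{x_0}(x_0)|\leq C_\star$ for $i=0,\dots,[\sigma]$, where $C_\star:=C(\|u\|_{L^\infty(\mathbb R^n)}+C_f)$ and $C$ is independent of $x_0$.

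Since $w$ is Dini, $\psi(t)=w(t)+\int_0^t w(r)/r\,dr$ is bounded on $[0,1]$, so the pointwise expansion collapses to $|u(x)-p_{x_0}(x)|\leq CC_\star|x-x_0|^{\sigma}$. From here the $C^\sigma$ norm is extracted by a standard two-point argument. For $\sigma\in(0,1)$ the polynomial $p_{x_0}$ reduces to the constant $u(x_0)$ and the estimate is literally the H\"older bound. For $\sigma\in[1,2)$, write $p_{x_0}(x)=u(x_0)+b_{x_0}\cdot(x-x_0)$ with $|b_{x_0}|\leq C_\star$; for $x_0,x_1\in B_{1/2}(0)$ and $r:=|x_0-x_1|$, the triangle inequality applied at any point $x\in B_r(x_0)\cap B_r(x_1)$ shows $|p_{x_0}(x)-p_{x_1}(x)|\leq 2CC_\star r^{\sigma}$ on a ball of radius comparable to $r$; since $p_{x_0}-p_{x_1}$ is affine, a standard inverse estimate for polynomials yields $|b_{x_0}-b_{x_1}|\leq CC_\star r^{\sigma-1}$, which is the H\"older seminorm of $\nabla u$. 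The borderline case $\sigma=1$ only requires a Lipschitz bound, which follows directly from $|u(x)-p_{x_0}(x)|\leq CC_\star|x-x_0|$ and $|b_{x_0}|\leq C_\star$.

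The main obstacle is the bookkeeping step: guaranteeing that the translation $y\mapsto x_0+y$ preserves the structural assumptions with constants uniform in $x_0\in B_{1/2}(0)$, which is precisely why the corollary's hypotheses are formulated over all pairs $x_1,x_2$ rather than centered at $0$. Once this uniform application of Theorem \ref{th4.1} is in place, the rest of the argument is a clean assembly of the boundedness of $\psi$ near zero and the polynomial inverse estimate.
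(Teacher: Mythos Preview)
Your proposal is correct and is exactly the standard derivation the paper has in mind: the corollary is stated immediately after Theorem \ref{th4.1} with no separate proof, so the intended argument is precisely to translate the base point, invoke Theorem \ref{th4.1} uniformly (which is why the hypotheses are phrased for all pairs $x_1,x_2$), use the Dini condition to bound $\psi$ near $0$, and extract the $C^\sigma$ seminorm by the two-point polynomial comparison. There is nothing to add.
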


\begin{example}
Since the assumption $(\ref{hs})$ is slightly complicated, we provide several examples when it is satisfied. We first consider the kernel $K_a(x,y)$ which satisfies, for any $r>0$
\begin{equation}\label{hs1}
\int_{B_{2r}(0)\setminus B_{r}(0)}|K_a(x,y)-K_a(0,y)|dy\leq \Lambda w(|x|)r^{-\sigma},\quad \text{in $B_1(0)$}.
\end{equation}
Thus, for any $0<r<1$, $x\in B_1(0)$ and non-negative integer $n$, we have 
\begin{equation*}
\int_{B_{\frac{r}{2^{n}}}(0)\setminus B_{\frac{r}{2^{n+1}}}(0)}|K_a(x,y)-K_a(0,y)||y|^{\min\{2,\sigma+\bar\beta\}}dy\leq \Lambda w(|x|)2^{\sigma-n\min\{2-\sigma,\bar\beta\}}r^{\min\{2-\sigma,\bar\beta\}},
\end{equation*}
and
\begin{equation*}
\int_{B_{2^{n+1}r}(0)\setminus B_{2^{n}r}(0)}|K_a(x,y)-K_a(0,y)||r|^{\min\{2,\sigma+\bar\beta\}}dy\leq \Lambda w(|x|)2^{-n\sigma}r^{\min\{2-\sigma,\bar\beta\}}.
\end{equation*}
Then it is not hard to verify that $(\ref{hs1})$ implies $(\ref{hs})$. Another more concrete example satisfying $(\ref{hs})$ is given by the kernel of the form 
\begin{equation}
K_a(x,y)=\frac{k_a(x,y)}{|y|^{n+\sigma}},\quad \text{for any $x\in B_1(0)$ and $y\in\mathbb R^n$},
\end{equation}
where $|k_a(x,y)-k_a(0,y)|\leq \Lambda w(|x|)$. 
\end{example}

\textbf{Acknowledgement.} I would like to thank my advisor Prof. Andrzej \Swiech\, for all the useful discussions and encouragement.

\end{document}